\newtheorem{theorem}{Theorem}[section]
\newtheorem{assertion}[theorem]{Assertion}
\newtheorem{lemma}[theorem]{Lemma}
\newtheorem{corollary}[theorem]{Corollary}
\newtheorem{proposition}[theorem]{Proposition}
\theoremstyle{definition}
\newtheorem{definition}[theorem]{Definition}
\newtheorem{example}[theorem]{Example}
\theoremstyle{remark}
\newtheorem{remark}[theorem]{Remark}
\numberwithin{equation}{section}
\newcommand{\mc}{\mathcal}
\newcommand{\C}{{\mathbb C}}
\newcommand{\Z}{{\mathbb Z}}
\newcommand{\N}{{\mathbb N}}
\newcommand{\CT}{{\mathcal T}}
\newcommand{\Rad}{{\rm{Rad}}}
\newcommand{\Res}{{\rm{Res}}}
\newcommand{\Ind}{{\rm{Ind}}}
\newcommand{\id}{{\rm{id}}}
\newcommand{\cC}{\mc C}
\newcommand{\cL}{\mc L}
\newcommand{\be}{\begin{equation}}
\newcommand{\ee}{\end{equation}}
\newcommand{\cR}{\mc R}
\newcommand{\bT}{\mathbf T}
\newcommand{\inv}{^{-1}}
\newcommand{\ol}{\overline}
\newcommand{\lr}{\longrightarrow}
\newcommand{\wt}{\widetilde}
\newcommand{\dq}{\Delta_q}
\newcommand{\U}{{\rm{U}}}
\newcommand{\End}{{\rm{End}}}
\newcommand{\Hom}{{\rm{Hom}}}
\newcommand{\TL}{{\rm{TL}}}
\newcommand{\tr}{{\rm{tr}}}
\newcommand{\fsl}{{\mathfrak {sl}}}
\newcommand{\fso}{{\mathfrak {so}}}
\newcommand{\ot}{\otimes}
\newcommand{\bt}{\boxtimes}
\newcommand{\ut}{\underline{\otimes}}
\newcommand{\SO}{{\rm SO}}
\newcommand{\la}{{\langle}}
\newcommand{\ra}{{\rangle}}
\begin{document}

\normalfont

\title[Temperley-Lieb at roots of unity]{Temperley-Lieb at roots of unity, a fusion category and the Jones quotient.}


\author{K. Iohara, G.I. Lehrer and R.B. Zhang}
\thanks{The present work was initiated during an Australian Research Council funded
visit of K.~I. to the University of Sydney in October-November 2016. He 
gratefully acknowledges the support and hospitality extended to him.}
\address{Univ Lyon, Universit\'{e} Claude Bernard Lyon 1, CNRS UMR 5208, Institut Camille Jordan, 
43 Boulevard du 11 Novembre 1918, F-69622 Villeurbanne cedex, France}
\email{iohara@math.univ-lyon1.fr}
\address{School of Mathematics and Statistics,
University of Sydney, NSW 2006, Australia}
\email{gustav.lehrer@sydney.edu.au, ruibin.zhang@sydney.edu.au}
\begin{abstract} When the parameter $q$ is a root of unity, the Temperley-Lieb algebra $\TL_n(q)$ is non-semisimple
for almost all $n$. In this work, using cellular methods,
we give explicit generating functions for the dimensions of all the simple $\TL_n(q)$-modules.
Jones showed that if the order $|q^2|=\ell$
there is a canonical symmetric bilinear form on $\TL_n(q)$, whose radical $R_n(q)$ is generated by
a certain idempotent $E_\ell\in\TL_{\ell-1}(q)\subseteq\TL_n(q)$, which is now referred to as 
the Jones-Wenzl idempotent, for which an explicit formula was subsequently given by Graham and Lehrer.
Although the algebras $Q_n(\ell):=\TL_n(q)/R_n(q)$, which we refer to as the Jones algebras (or quotients),
are not the largest semisimple quotients of the  $\TL_n(q)$, 
our results include dimension formulae for all the simple $Q_n(\ell)$-modules.
This work could therefore be thought of as generalising that of Jones {\it et al.} on the
algebras $Q_n(\ell)$. We also treat a fusion category $\cC_{\rm red}$
 introduced by Reshitikhin, Turaev and Andersen, whose objects
are the quantum $\fsl_2$-tilting modules with non-zero quantum dimension, and which has an associative
truncated tensor product (the fusion product). We show $Q_n(\ell)$ is the endomorphism algebra
of a certain module in $\cC_{\rm red}$ and use this fact to recover a dimension formula for $Q_n(\ell)$.
We also show how to construct a ``stable limit'' $K(Q_\infty)$ of the corresponding fusion 
category of the $Q_n(\ell)$, whose structure
is determined by the fusion rule of $\cC_{\rm red}$, and observe a connection with
a fusion category of affine $\fsl_2$.
\end{abstract}
\subjclass[2010]{81R15, 16W22, 46L37 }
\maketitle
\section{Introduction}
The Temperley-Lieb algebras $\TL_n(q)$ (see \S\ref{s:tl} below) are algebras over a ring $R$
which depend on a parameter $q\in R$. They occur in many areas of mathematics and physics,
and may be characterised as the endomorphism algebras of the objects in the Temperley-Lieb category
(see \cite{GLA}). In this work we shall generally take $R=\C$. These algebras are well known to have a cellular structure \cite{GL}
and their representation theory may be analysed using this structure.

For generic vaues of $q$, the algebra $\TL_n(q)$ is semisimple, and its simple modules are the cell modules
$W_t(n)$, for $t\in\Z$, $0\leq t\leq n$ and $t\equiv n\text{(mod }2)$. However when $q$ is a root of unity,
the cell modules are often no longer simple, but have a simple head $L_t(n)$. The modules $L_t(n)$,
where $t$ runs over the same values as above, form a complete set of simple modules for $\TL_n(q)$ 
in this case.

 In this work, our first purpose is to give explicit formulae for the dimensions of the modules
 $L_t(n)$. This will be done by deriving, for each $t\in\Z_{\geq 0}$, an explicit formula for the generating function
 
 \be\label{eq:lx}
 L_t(x):=\sum_{k=0}^\infty \dim(L_t(t+2k))x^k.
 \ee
 
 The algebra $\TL_n(q)$ has a trace $\tr_n:\TL_n(q)\to \C$, identified by Jones, whose associated bilinear form 
 is generically non-degenerate (see \eqref{eq:jf} below).
 If $q$ is a root of unity, and the order $|q^2|=\ell$, then $\tau_n$ has a radical of dimension $1$ if $n=\ell-1$,
 the generating element being the Jones-Wenzl idempotent $E_{\ell}\in\TL_{\ell-1}(q)$. An explicit formula for 
 $E_{\ell}$ is given in \cite{GLA}. Jones has shown 
 \cite[Thm. 2.1]{JR} that in this case, for any $n\geq \ell-1$, the radical $R_n(q)$ of $\tr_n$ is generated by
 $E_{\ell}\in\TL_{\ell-1}(q)\subseteq\TL_n(q)$. Moreover, for $n\geq \ell$, the algebra $\TL_n(q)$ has
 the canonical semisimple quotient $Q_n(\ell):=\TL_n(q)/R_n(q)$, which we refer to as the Jones algebra.
 
 As a consequence of our analysis, we deduce a complete description of the simple representations of the
 Jones algebras $Q_n(\ell)$, as well as a generating function for its dimension, which recovers a result
 of \cite{GHJ}. Note that $Q_n(q)$ is far from being the maximal semisimple quotient of $\TL_n(q)$,
 as our work shows.

 In \S\ref{s:fus} we relate $Q_n(\ell)$ to the fusion category $\cC_{\rm red}$ introduced by Reshetikhin, Turaev and Andersen
 \cite{RT,A} whose objects are sums of the indecomposable tilting modules 
 of non-zero quantum dimension for the quantum group $\U_q(\fsl_2)$, when $q^2$ is a primitive $\ell^{\text{th}}$
 root of unity. The category $\cC_{\rm red}$ has a (truncated) tensor product $\ut$, and if $\Delta_q(1)$ is the indecomposable (in fact simple)
 tilting module with highest weight $1$, we show that $Q_n(\ell)\cong\End_{\U_q(\fsl_2)}(\Delta_q(1)^{\ut n})$. Together with our earlier results,
 this recovers a formula for the dimension of $Q_n(\ell)$ due to Jones \cite{GHJ}.
\section{The Temperley-Lieb algebras.}\label{s:tl}

\subsection{Definitions}
In this work, all algebras will be over $\C$. Much of the theory we develop applies over more general 
domains, but since we will be concerned here with connections to the theory of
operator algebras and mathematical physics, we limit our discussion to $\C$-algebras. For $n\in\N$, the Temperley-Lieb algebra
$\TL_n(q)$ is defined as follows.

\begin{definition}\label{def:tln}
Let $q\in\C$. $\TL_n=\TL_n(q)$ is the associative $\C$-algebra with generators $f_1,f_2,\dots,f_{n-1}$ and relations
\be\label{eq:reltl}
\begin{aligned}
f_i^2=&-(q+q\inv)f_i\text{ for all }i\\
f_if_{i\pm 1}f_i=&f_i\text{ for all }i\\
f_if_j=&f_jf_i\text{ if }|i-j|\geq 2.\\
\end{aligned}
 \ee 
\end{definition}

\subsection{The Jones form} In his seminal work \cite{J2} on subfactors of a factor, Jones showed that certain projectors 
$\{e_1,\dots,e_{n-1}\}$ ($n=1,2,,3,\dots$) in a von Neumann algebra satisfy the Temperley-Lieb-like
relations, a fact that led to the definition of the ``Jones polynomial'' of an oriented link. In the notation of 
\cite[p. 104, (I)--(VI)]{J3}, Jones showed that if $f_i=(q+q\inv)e_i$, then the $f_i$ satisfy the 
relations \eqref{eq:reltl}, where Jones' parameter $t$ is replaced by $q^2$.
If $q^2\neq -1$, Jones' form on $\TL_n(q)$ is defined as
 the unique (invariant) trace $\tr_n=\tr$ on $\TL_n(q)$ which satisfies
 \be\label{eq:jf}
 \tr(1)=1\text {   and   } \tr(xf_i)=-(q+q\inv)\inv\tr(x)\text{   for   }x\in\TL_{i-1}\subset\TL_i\subseteq\TL_n
 \ee
 for $1\leq i\leq n-1$.
 
 This trace on $\TL_n(q)$ is non-degenerate if and only if $q^2$ is not a root of unity,
 or, if $|q^2|=\ell$, $n\leq \ell-2$ (\cite[(3.8)]{GLA}. Thus the discrete set of values of $q^2$ for which
 Jones' sequence $(A_n)$ of algebras is infinite coincides precisely with the set of values of $q^2$
 for which the trace form above on $\TL_n(q)$ is degenerate.

 \subsection{Cell modules and forms}\label{ss:cmod} Let us fix $n$ and consider the representation theory of $\TL_n$.
 By \cite{GL} or \cite{GLA}, $\TL_n$ has cell modules $W_t:=W_t(n)$ whose basis is the set of monic
 Temperly-Lieb morphisms from $t$ to $n$, where $t\in\CT(n)$, and 
 $\CT(n)=\{t\in\Z\mid 0\leq t\leq n\text{ and }t+n\in2\Z\}$.
 
 Now $W_t$ has an invariant form $(\;,\;)$ which may be described as follows. For monic diagrams
 $D_1,D_2:t\to n$, we form the diagram $D_1^*D_2:t\to t$. If $D_1^*D_2$ is monic (i.e. a multiple of $\id_t$), then we write
 $D_1^*D_2=(D_1,D_2)\id_t$; otherwise we say $(D_1,D_2)=0$. 
 Here $D^*$ denotes the diagram obtained from $D$ by reflection in a horizontal, extended to $W_t$ by linearity.
 
The form $(\;,\;)_t$ is evidently equivariant for the $\TL_n(q)$-action; That is, we have for any element $a\in\TL_n(q)$
and elements $v,w\in W_t(n)$, $(av,w)_t=(v,a^* w)_t$. Hence the radical $\Rad_t$ of the form $(\;,\;)_t$
is a submodule of $W_t$. Let $L_t:=W_t/\Rad_t$. 
The general theory asserts that the $L_t$ are simple, and represent all the distinct isomorphism classes of
simple $\TL_n(q)$-modules.

 \section{Semisimplicity and non-degeneracy.} 
 Clearly, if the trace \eqref{eq:jf} is non-degenerate, the algebra $\TL_n$ is semisimple. The converse is true
except for one single case  (see \cite[Rem. 3.8, p.204]{GLA}). It follows from \cite[Cor. (3.6)]{GLA}
that if $|q^2|=\ell$, $\TL_n$ is non-semisimple if and only if $n\geq \ell$. 
Moreover we have very precise information concerning the radical of 
 the invariant trace form (which we henceforth call the Jones form).

\subsection{Radical of the trace form} The  radical of the trace form above is given by the following result
(see \cite[\S 3]{GLA}, \cite{J1}).
\begin{proposition}\label{prop:rad}
If $q$ is not a root of unity then $\tr$ is non-degenerate and $\TL_n$ is semisimple for all $n$.

Suppose the order of $q^2$ is $\ell$. Then there is a unique idempotent $E_{\ell-1}\in\TL_{\ell-1}$ (the Jones-Wenzl idempotent)
such that $f_iE_{\ell-1}=E_{\ell-1}f_i=0$ for $1\leq i\leq \ell-2$.
Moreover for $n\geq \ell$ the radical of $\tr_n$ is generated as ideal of $\TL_n$ by $E_{\ell-1}$.
\end{proposition}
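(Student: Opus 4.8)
The plan is to prove the three assertions of \propref{prop:rad} in order, using the cellular structure of $\TL_n$ recalled in \S\ref{ss:cmod}. For the first statement, I would invoke the fact that for $q$ not a root of unity the cell forms $(\;,\;)_t$ are all non-degenerate: this can be checked by computing the Gram determinant of each cell form $W_t(n)$ explicitly, or by the general cellular-algebra criterion (Graham--Lehrer \cite{GL}) that non-degeneracy of all cell forms is equivalent to semisimplicity, which in turn forces the trace form $\tr_n$ to be non-degenerate since a semisimple algebra with a faithful trace has non-degenerate trace pairing. The key computation is that the relevant Gram determinants are (up to sign and powers of $q+q\inv$) products of quantum integers $[m]_q = \frac{q^m-q^{-m}}{q-q\inv}$, none of which vanish when $q$ is not a root of unity.

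For the second statement, I would construct the Jones--Wenzl idempotent $E_{\ell-1}\in\TL_{\ell-1}$ directly, e.g.\ via the Wenzl recursion $E_{k+1}=E_k-\frac{[k]_q}{[k+1]_q}E_k f_k E_k$ (with $E_1=1$), noting that this recursion makes sense precisely because $[k+1]_q\neq 0$ for $k\leq \ell-2$ when $|q^2|=\ell$. A straightforward induction then shows $f_i E_{\ell-1}=E_{\ell-1}f_i=0$ for $1\leq i\leq \ell-2$ and $E_{\ell-1}^2=E_{\ell-1}$. For uniqueness: any element annihilated on both sides by all $f_i$ lies in the span of $\id_{\ell-1}$ together with the ideal generated by the $f_i$; the idempotent condition together with $\tr_{\ell-1}(E_{\ell-1})=\tr_{\ell-1}(\id)=1$ (which follows from the defining property \eqref{eq:jf} since $E_{\ell-1}$ differs from $\id$ by elements of the ideal on which $\tr$ is forced to vanish against $E_{\ell-1}$) pins it down. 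Equivalently, $E_{\ell-1}$ spans the $1$-dimensional radical of $\tr_{\ell-1}$ (which is nonzero exactly when $n=\ell-1$ by the non-degeneracy criterion quoted after \eqref{eq:jf}), and a one-dimensional radical of a trace form that is also a two-sided ideal is generated by a unique idempotent.

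For the third statement, that $R_n(q)=\tr_n^{-1}(0)^{\perp}$ is generated as a two-sided ideal by $E_{\ell-1}$ for $n\geq\ell$: the containment $(E_{\ell-1})\subseteq R_n(q)$ is the easy direction, since $\tr_n(x E_{\ell-1} y)=\tr_n(y x E_{\ell-1})$ by the trace property, and $\tr_n(z E_{\ell-1})$ vanishes for all $z\in\TL_n$ because one can push $E_{\ell-1}$ past using the conditional-expectation property \eqref{eq:jf} of $\tr_n$ down to $\TL_{\ell-1}$ where $E_{\ell-1}$ spans the radical. For the reverse containment I would argue by induction on $n$ using the basis of $\TL_n$ adapted to the tower $\TL_{n-1}\subset\TL_n$ (the decomposition $\TL_n=\TL_{n-1}\oplus \TL_{n-1}f_{n-1}\TL_{n-1}$), reducing the general element of $R_n(q)$ to one supported in lower degree; the cellular/Markov-trace structure guarantees that the radical is exactly the span of cell-module matrix coefficients for $t<\ell-1$, and each such coefficient is manifestly in the ideal generated by the image of $E_{\ell-1}$. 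The main obstacle is this last step: controlling the two-sided ideal generated by $E_{\ell-1}$ and matching it precisely with the radical requires a careful bookkeeping argument over the diagram basis (or, alternatively, an appeal to Jones' original argument \cite[Thm.~2.1]{JR} together with the cellular identification of $R_n(q)$ with $\bigoplus_{t<\ell-1}$ of the cell-ideals, as in \cite[\S3]{GLA}); everything else is essentially the quantum-integer arithmetic forced by $|q^2|=\ell$.
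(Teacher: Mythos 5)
The paper does not actually prove Proposition~\ref{prop:rad}: it is stated as a quoted result with the citation ``(see \cite[\S 3]{GLA}, \cite{J1})'', so there is no in-text argument against which to compare your proposal. Your outline follows the standard route one would take, and for the first assertion (generic $q$) and the broad shape of the existence argument via Wenzl's recursion it is reasonable. But there are two concrete errors in the middle step, and the final step remains an outline that ultimately invokes the same references the paper does.

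First, in the paper's normalisation $f_i^2 = -(q+q^{-1})f_i$ one has $f_i = -e_i$ for the usual diagrammatic generator $e_i$ (with $e_i^2 = [2]_qe_i$), so Wenzl's recursion reads $E_{k+1} = E_k + \frac{[k]_q}{[k+1]_q} E_k f_k E_k$; the sign in your formula is wrong, as one already sees at $k=1$, where $E_2 = 1 + \frac{1}{q+q^{-1}}f_1$ is the element killed by $f_1$. Second, and more seriously, the claim that $\tr_{\ell-1}(E_{\ell-1}) = \tr_{\ell-1}(\id) = 1$ is false: $E_{\ell-1}$ spans the radical of $\tr_{\ell-1}$, so $\tr_{\ell-1}(E_{\ell-1}) = 0$ (the paper itself uses $\tr_{\ell-1}(E_{\ell-1}) = 0$ in the proof of Theorem~\ref{thm:zero}); indeed the vanishing of $[\ell]_q$ is exactly what makes $\tr_{\ell-1}$ degenerate at $n = \ell-1$. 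What is true, and what actually drives the uniqueness, is that the \emph{coefficient of} $\id_{\ell-1}$ in $E_{\ell-1}$ equals $1$: writing $E = c\cdot\id + j$ with $j$ in the ideal $J_{\ell-1}$ generated by the $f_i$, the two-sided annihilation of $J_{\ell-1}$ by $E$ forces $E^2 = cE$, so idempotency gives $c=1$, and two such idempotents are then equal. Your sentence ``a one-dimensional radical of a trace form that is also a two-sided ideal is generated by a unique idempotent'' is not a valid general principle (a one-dimensional nilpotent ideal has no idempotent), so it cannot be used to finesse existence.

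For the third assertion you correctly identify the easy containment $\langle E_{\ell-1}\rangle\subseteq\ker\tr_n$ via the Markov/conditional-expectation property of $\tr_n$, but the reverse containment is, as you acknowledge, the hard part, and your proposal ends by deferring it to Jones~\cite[Thm.~2.1]{JR} and Graham--Lehrer~\cite[\S 3]{GLA}. That is exactly what the paper itself does, so this is acceptable as a citation but not as a proof; if you want an independent argument, the cleanest route is the cellular one: identify $\ker\tr_n$ with the sum of cell ideals corresponding to the degenerate cell forms and show each such cell ideal is reached from $E_{\ell-1}\otimes I^{\ot(n-\ell+1)}$ by composing with cap/cup morphisms, which requires the bookkeeping you allude to.
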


\begin{remark}cf. \cite[Remark (3.8)]{GLA}
It follows from Proposition \ref{prop:rad} that the trace $\tr_n$ is non-degenerate if and only if 
$n\leq \ell-2$, where $\ell=|q^2|$. It follows that the case $n=\ell-1$ is uniquely characterised as 
the one where the form $\tr$ is degenerate, but $\TL_{\ell-1}$ is semisimple.
\end{remark}

The following formula for the idempotent $E_{\ell-1}$ was proved in \cite[Cor. 3.7]{GLA}.
To prepare for its statement, recall that if $F$ is a finite forest (i.e. a partially ordered set in which 
$x\leq a, x\leq b\implies a\leq b$ or $b\leq a$), then we define a Laurent polynomial 
\be\label{eq:hf}
h_F(x)=\frac{[|F|]_x!}{\prod_{a\in F}[|F_{\leq a}|]_x},
\ee
where, for $m\in\N$, $[m]_x=\frac{x^m-x^{-m}}{x-x\inv}$ and $[m]_x!=[m]_x[m-1]_x\dots[2]_x[1]_x$.

\begin{theorem}\label{thm:jwi} For any Temperley-Lieb diagram $a:0\to 2n$ we have an associated forest $F_a$, which is simply the 
poset of arcs, ordered by their nesting. For any Temperley-Lieb diagram $D:t\to n$, one obtains a unique diagram $\ol D:0\to t+n$ by rotating 
the bottom line clockwise by $\pi$. With this notation, if $|q^2|=\ell$, we have
\be
E_{\ell-1}=\sum_D h_{F_{\ol D}}(q) D,
\ee
where the sum is over the diagrams from $\ell-1$ to $\ell-1$, i.e. over the diagram basis of $\TL_{\ell-1}$.
\end{theorem}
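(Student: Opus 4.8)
The plan is to recognise the right-hand side $E:=\sum_D h_{F_{\ol D}}(q)\,D$ (the sum over the diagram basis of $\TL_{\ell-1}$) as $E_{\ell-1}$ by means of a uniqueness characterisation, so that the substance of the theorem reduces to a bookkeeping identity. Since $|q^2|=\ell$, the $q$-integers $[1]_q,\dots,[\ell-1]_q$ are all nonzero, so each coefficient $h_{F_{\ol D}}(q)$ in \eqref{eq:hf} is a well-defined, nonzero scalar. Now record the uniqueness behind \propref{prop:rad}. Because $\TL_{\ell-1}$ is semisimple (the Remark following \propref{prop:rad}), and the two-sided ideal $I$ generated by $f_1,\dots,f_{\ell-2}$ has codimension one --- the quotient $\TL_{\ell-1}/I$ is spanned by the image of $1$ and is nonzero, and the quotient map $\TL_{\ell-1}\to\TL_{\ell-1}/I\cong\C$ sends $\id$ to $1$ and every other diagram (each of which factors through some $f_j$) to $0$, hence is exactly $x\mapsto(\text{coefficient of }\id\text{ in }x)=:\ve(x)$ --- any $x\in\TL_{\ell-1}$ with $f_ix=xf_i=0$ for $1\le i\le\ell-2$ is central, annihilates $I$, and therefore equals $\ve(x)\,z$, where $z$ is the central idempotent with $\TL_{\ell-1}z\cong\TL_{\ell-1}/I$ and so $\ve(z)=1$. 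Consequently $E_{\ell-1}=z$ is the unique element of $\TL_{\ell-1}$ with $f_iE_{\ell-1}=E_{\ell-1}f_i=0$ $(1\le i\le\ell-2)$ and $\ve(E_{\ell-1})=1$, and it suffices to verify these two properties for $E$.

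Two of the required facts are immediate. The diagram $\ol{\id}\colon0\to2(\ell-1)$ is the fully nested ``rainbow'' of $\ell-1$ arcs, so $F_{\ol{\id}}$ is a chain of length $\ell-1$ and $h_{F_{\ol{\id}}}(q)=[\ell-1]_q!\,/\,\big([\ell-1]_q[\ell-2]_q\cdots[1]_q\big)=1$ by \eqref{eq:hf}; thus $\ve(E)=1$. Moreover $h_F$ depends only on the isomorphism type of $F$, and $\ol{D^*}$ is the mirror image of $\ol D$, so has the same nesting forest; hence $h_{F_{\ol{D^*}}}(q)=h_{F_{\ol D}}(q)$ and $E^*=E$. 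Since $f_i^*=f_i$, it remains only to prove $f_iE=0$ for $1\le i\le\ell-2$.

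For this, expand $f_iE=\sum_D h_{F_{\ol D}}(q)\,f_iD$ in the diagram basis. Left multiplication by $f_i$ sends a diagram $D$ to $-(q+q\inv)D$ if $D$ has a turnback joining its top vertices $i$ and $i+1$, and otherwise to a single diagram $f_iD$ which does have such a turnback; conversely, for a fixed diagram $T$ with a turnback at $(i,i+1)$, the diagrams $D$ with $f_iD=T$ are $T$ itself together with those obtained from $T$ by deleting the arc $\{i,i+1\}$ and one further arc visible from the vacated slot, then reconnecting the four loose endpoints in the unique planar way. Passing through $D\mapsto\ol D$, the turnback at $(i,i+1)$ is a leaf $\lambda$ of $F_{\ol T}$, and the preimages $D\ne T$ are indexed by the arcs of $\ol T$ bounding the face just outside $\lambda$ --- its parent (if any) and its siblings --- with $F_{\ol D}$ obtained from $F_{\ol T}$ by an explicit local re-nesting at $\lambda$. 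Collecting the coefficient of $T$ in $f_iE$ then reduces the vanishing to the family of scalar identities
\be
(q+q\inv)\,h_{F_{\ol T}}(q)=\sum_{D\,:\,f_iD=T,\ D\ne T}h_{F_{\ol D}}(q),
\ee
one for each such $T$. Expanding each $h$ via \eqref{eq:hf} and using the multiplicative behaviour of $h_F$ under splitting a forest at its roots (equivalently, the ``add a node'' recursions for $q$-hook-length polynomials), this should collapse to a standard identity among $q$-integers, valid whenever the quantities involved are defined. Granting it, $E$ enjoys both characterising properties of $E_{\ell-1}$, so $E=E_{\ell-1}$, and in particular $E$ is automatically idempotent.

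The real obstacle is precisely this last combinatorial step: pinning down the preimages $D$, describing $F_{\ol D}$ in terms of $F_{\ol T}$ for each surgery --- opening up a leaf-arc can re-nest many arcs simultaneously, so the accounting is delicate --- and checking that the sum of $q$-hook-length expressions telescopes to $(q+q\inv)h_{F_{\ol T}}(q)$. An alternative route that avoids the ``apply $f_i$'' combinatorics is to induct on $n$ using Wenzl's recursion $E_n=E_{n-1}-\tfrac{[n-1]_q}{[n]_q}E_{n-1}f_{n-1}E_{n-1}$ (valid for $n\le\ell-1$) and to match diagram coefficients there instead; the combinatorial input then becomes the effect on forests and hook-length polynomials of adjoining a strand (passing from $\ol{D'}$ to $\ol{D'\otimes1}$) and inserting one factor $f_{n-1}$.
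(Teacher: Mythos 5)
The paper does not prove this theorem; it is quoted from \cite[Cor.\ 3.7]{GLA}, so there is no in-house proof to compare against. Your strategy---characterise $E_{\ell-1}$ as the unique $x\in\TL_{\ell-1}$ with $f_ix=xf_i=0$ and $\ve(x)=1$, then verify these two properties for $E=\sum_D h_{F_{\ol D}}(q)D$---is a reasonable and standard one. The easy checks (that $h_{F_{\ol{\id}}}(q)=1$, that $E^*=E$ because $F_{\ol{D^*}}\cong F_{\ol D}$) are fine. But the proposal does not actually prove the theorem, and you say so yourself.

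The genuine gap is exactly the step you flag: the vanishing $f_iE=0$, equivalently the family of identities
\[
(q+q^{-1})\,h_{F_{\ol T}}(q)\;=\;\sum_{\substack{D\ne T\\ f_iD=T}}h_{F_{\ol D}}(q)
\]
for every diagram $T$ with a turnback at $(i,i+1)$. You sketch the shape of the preimage set $\{D: f_iD=T\}$ and how the forest is re-nested, and you gesture at ``multiplicative behaviour of $h_F$ under splitting a forest at its roots'' and ``a standard identity among $q$-integers,'' but none of this is carried out: the bijection between preimages and a precise set of arcs of $\ol T$, the exact change $F_{\ol T}\rightsquigarrow F_{\ol D}$ for each such arc, and the telescoping of the hook-length ratios are all left open. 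This is the entire mathematical content of the theorem, so what you have is a proof skeleton, not a proof. (The alternative you mention, inducting via Wenzl's recursion $E_n=E_{n-1}-\frac{[n-1]_q}{[n]_q}E_{n-1}f_{n-1}E_{n-1}$ and matching diagram coefficients, is equally plausible and equally unexecuted; the combinatorial work simply moves to tracking how $h_F$ changes when a strand is adjoined and an $f_{n-1}$ inserted.)

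A secondary, smaller issue: in the uniqueness step you assert that any $x$ with $f_ix=xf_i=0$ ``is central, annihilates $I$.'' As written this is not justified: in a general semisimple algebra, killing a generating set of an ideal from both sides does not force you to kill the whole ideal (e.g.\ $E_{22}$ kills $E_{11}$ on both sides in $M_2(\C)$ without annihilating the ideal). What rescues the argument for $\TL_{\ell-1}$ is the extra fact, specific to Temperley--Lieb, that the two-sided ideal $I=\langle f_1,\dots,f_{\ell-2}\rangle$ coincides with the left ideal $\sum_i\TL_{\ell-1}f_i$ (and likewise the right ideal), so that $f_ix=0$ for all $i$ already gives $Ix=0$; equivalently, $\{x:f_ix=0\ \forall i\}$ is the $W_{\ell-1}$-isotypic component of the regular module, which is one-dimensional. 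You should either invoke this or give the cell-module argument; ``central'' does not follow for free. This is fixable, but the main gap above is not fixable without doing the real combinatorial work.
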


\begin{example}
If $\ell=4$, we may take $q=-\exp{\frac{\pi i}{4}}$, so that $q^2=i$ and the element $E_3\in\TL_3$ is easily shown to be equal to 
$$
E_3=1+f_1f_2+f_2f_1-\sqrt 2(f_1+f_2).
$$
Note that our defining parameter for $\TL_n$ in this case is $-(q+q\inv)=\sqrt 2$, and the above
element is the familiar one which occurs in the study of the two-dimensional Ising lattice model.
\end{example}

\subsection{The Jones quotient} We now wish to consider the quotient of $\TL_n$ by the ideal generated 
 by $E_{\ell-1}$.
\begin{definition}\label{def:q}
Assume that $|q^2|=\ell$ for a fixed integer $\ell\geq 3$. Let $R_n=R_n(q)=\langle E_{\ell-1}\rangle$ be the ideal of 
$\TL_n(q)$ generated by the idempotent $E_{\ell-1}\in\TL_{\ell-1}(q)$, where $\TL_{\ell-1}(q)$ is thought of as a
subalgebra of $\TL_n(q)$ for $n\geq\ell-1$ in the obvious way.

The algebra $Q_n=Q_n(\ell)$ ($n=\ell-1,\ell,\ell+1,\dots$) is
defined by
\[
Q_n(\ell)=\frac{\TL_n}{R_n(q)}.
\]
This algebra will be referred to as the ``Jones (projection) algebra''.
\end{definition}

Since we are taking the quotient by the radical of the trace form $\tr_n$, it follows that $Q_n$ has a non-degenerate invariant 
 trace, and hence that 
\be\label{eq:qss}   
Q_n(\ell) \text{ is semisimple}.
\ee

\begin{remark}\label{rem:l3} \begin{itemize}
\item The algebras $Q_n(\ell)$ are not the maximal semisimple quotients of $\TL_n(q)$, as the results of the next section will show.

\item We remark that $E_2=1-f_1$, from which it follows that $Q_n(3)\cong \C$ for all $n$. 
\end{itemize}
\end{remark}

\section{Representation theory of $\TL_n(q)$.}\label{s:tl}
We shall apply the basic results of \cite{GLA} to obtain precise information about the simple modules for $\TL_n(q)$
from the general results in \S\ref{ss:cmod} about cell modules.
\subsection{Review of the representation theory of $\TL_n$ at a root of unity} Let $|q^2|=\ell$. As noted in
Remark \ref{rem:l3} it suffices to consider $\ell\geq 4$; further, since $Q_n(\ell)=\TL_n(q)$ for $n<\ell-1$,
we generally assume that $n\geq \ell-1$. 

The following description of the composition factors of $W_t=W_t(n)$ was given in \cite[Thm. 5.3]{GLA}, and in the formulation here in 
\cite[Thm. 6.9]{ALZ}. 

\begin{theorem}\label{thm:tlcomp}
Let $|q^2|=\ell$, fix $n\geq\ell$ and let $\CT(n)$ be as above. Let $\N'=\{i\in\N\mid i\not\equiv -1(\text{mod } \ell)\}$.
Define $g:\N'\to\N'$ as follows: for $t=a\ell+b\in\N'$, $0\leq b\leq \ell-2$, define $g(t)=(a+1)\ell+\ell-2-b$. Notice that 
$g(t)-t=2(\ell-b-1)$, so that $g(t)\geq t+2$ and $g(t)\equiv t(\text{mod } 2)$.
\begin{enumerate}
\item For $t\in\CT(n)\cap\N'$ such that $g(t)\in\CT(n)$, there is a non-zero homomorphism $\theta_t:W_{g(t)}(n)\to W_t(n)$.
These are explicitly described in \cite[Thm 5.3]{GLA}, and are the only non-trivial homomorphisms between the cell modules
of $\TL_n$.
\item If $t\in\CT(n)$ is such that $t\in\N'$ and $g(t)\in\CT(n)$, then $W_t(n)$ has composition factors $L_t$ and $L_{g(t)}$,
each with multiplicity one. All other cell modules for $\TL_n(q)$ are simple.
\item If $\ell\geq 3$, all the modules $L_t$, $t\in\CT(n)$, are non-zero, and form a complete set of simple $\TL_n(q)$-modules. 
\end{enumerate}
\end{theorem}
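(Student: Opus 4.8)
The plan is to deduce the whole statement from the cellular structure of $\TL_n(q)$ (available from \cite{GL,GLA}) together with the known dimension-vector/Gram-determinant computations. First I would recall the cellular setup: the poset $\CT(n)$ indexes the cell modules $W_t(n)$, ordered so that $W_t$ sits above $W_s$ when $t<s$ (fewer through-strands is ``more singular''), and the decomposition numbers $[W_t:L_s]$ are governed by the Gram determinants $\det(\,,\,)_t$ on $W_t(n)$. Part (3) is the softest: the identity $\id_t\in W_t(n)$ (the trivial diagram from $t$ to $n$ via cups, when $t\le n$ with correct parity) pairs to $1$ with itself, so $(\,,\,)_t\ne 0$, hence $L_t=W_t/\Rad_t\ne 0$; that the $L_t$ for $t\in\CT(n)$ exhaust the simple modules and are pairwise non-isomorphic is the standard cellular dichotomy, since $L_t$ has $W_t$ as a ``distinguished'' composition factor appearing with multiplicity one and not appearing in any $W_s$ with $s>t$ in the order.

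For parts (1) and (2) the arithmetic heart is the behaviour of the Gram determinant of $(\,,\,)_t$ as a function of $q$; its linear factors are quantum integers $[m]_q$, and a factor vanishes exactly when $\ell\mid m$. Tracking which $[m]_q$ occur — this is the classical computation, e.g. via the recursion for Chebyshev-like polynomials $U_{k}$ controlling $\dim W_t(n)$ and the nesting of diagrams — shows $\Rad_t\ne 0$ precisely for those $t\in\N'$ whose ``partner'' $g(t)$ also lies in $\CT(n)$, and in that case $\Rad_t\cong L_{g(t)}$, giving the non-zero homomorphism $\theta_t\colon W_{g(t)}(n)\to W_t(n)$ as the inclusion $\Rad_t\hookrightarrow W_t$ composed with the surjection $W_{g(t)}\twoheadrightarrow L_{g(t)}$; the explicit form of $\theta_t$ is then read off from \cite[Thm.~5.3]{GLA}. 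The reflection function $g$, and the fact that the blocks of $\TL_n(q)$ are ``linear'' (each $W_t$ has at most two composition factors, and the only possible extensions link $t$ to $g(t)$), is exactly the linkage principle for $\TL_n$ at a root of unity; I would cite \cite{GLA} for this rather than re-derive it, and then part (2) is immediate: $[W_t:L_t]=[W_t:L_{g(t)}]=1$ when $t\in\N'$ and $g(t)\in\CT(n)$, and $W_t$ is simple otherwise.

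Finally I would verify the elementary numerology asserted about $g$ — for $t=a\ell+b$ with $0\le b\le\ell-2$, one has $g(t)=(a+1)\ell+(\ell-2-b)$, so $g(t)-t=2(\ell-1-b)\ge 2$ and $g(t)\equiv t\pmod 2$, and $g$ indeed maps $\N'$ to $\N'$ since the residue of $g(t)$ mod $\ell$ is $\ell-2-b\ne\ell-1$ — so that the indexing in (1) and (2) is consistent with $\CT(n)$. The main obstacle is not conceptual but bookkeeping: pinning down exactly which quantum integers divide the Gram determinant $\det(\,,\,)_{W_t(n)}$, and hence showing that $\Rad_t$ is non-zero precisely in the stated range and is then \emph{irreducible} (rather than merely non-zero). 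This is where one genuinely needs the machinery of \cite{GL,GLA} — the explicit basis of $W_t(n)$ by monic diagrams, the triangularity of $(\,,\,)_t$ with respect to a suitable order, and the resulting product formula for the determinant — and I would present it by quoting those results and checking that the hypotheses apply for all $n\ge\ell$, rather than reproving the determinant formula from scratch.
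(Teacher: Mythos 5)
Your proposal takes essentially the same route as the paper: the paper does not re-prove this theorem but simply imports it from \cite[Thm.~5.3]{GLA} and \cite[Thm.~6.9]{ALZ}, and your sketch (cell modules, Gram determinants factoring into quantum integers, linkage via $g$, with the hard determinant computation deferred to \cite{GL,GLA}) is a faithful outline of how those references establish it. One small imprecision worth fixing: the distinguished diagram $D_0=I^{\ot t}\ot(\text{cups})$ does not pair to $1$ with itself but to a power of $-(q+q^{-1})$, which is non-zero exactly because $\ell\geq 3$; and the inclusion $\Rad_t\cong L_{g(t)}$ (so that $\Rad_t$ is actually simple) is the one genuinely non-trivial structural fact you are, correctly, delegating to \cite{GLA}.
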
 

 It follows from Theorem \ref{thm:tlcomp} that in general, the composition factors of the cell module $W_t(n)$ 
(for $n\in\N'$) are $L_t(n)$ and $L_{g(t)}(n)$ where $g:\N'\to\N'$ is the function defined in 
Theorem \ref{thm:tlcomp}. This has the following consequence for the dimensions of the respective modules.

\begin{definition}\label{def:dims}
For $t\in\Z_{\geq 0}$ define functions $w_t$ and $l_t$ $:\N\to\N$ by 
$w_t(n)=\dim(W_t(n))$ and $l_t(n)=\dim(L_t(n))$.
\end{definition}

Note that if $t>n$, $w_t(n)=l_t(n)=0$. 
Further $w_t(n)=l_t(n)=0$ if $n\not\equiv t\text{(mod }2)$.

 \begin{proposition}\label{prop:diml} Let $|q^2|=\ell$. 
We have, for $t\in\N'=\{s\in\Z_{\geq 0}\mid s\not\equiv -1(\text{mod }\ell)\}$:
\be\label{eq:diml}
l_t(n)=\sum_{i=0}^\infty (-1)^iw_{g^i(t)}(n).
\ee
\end{proposition}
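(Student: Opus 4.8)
The plan is to extract the dimension identity from the composition series structure described in \thmref{thm:tlcomp} by an inclusion-exclusion argument on the chain $t, g(t), g^2(t), \dots$. First I would observe that since dimension is additive on short exact sequences, the content of \thmref{thm:tlcomp}(2) is precisely that whenever $t \in \N'$ we have $w_t(n) = l_t(n) + l_{g(t)}(n)$, with the second term understood to vanish when $g(t) \notin \CT(n)$ (in particular when $g(t) > n$), and that all cell modules indexed outside $\N'$ are simple, so $w_t(n) = l_t(n)$ there. The key structural point to nail down is that $g$ maps $\N'$ into $\N'$ (stated in \thmref{thm:tlcomp}) and is strictly increasing on the relevant range, so the orbit $t, g(t), g^2(t), \dots$ consists of distinct elements of $\N'$ with $g^i(t) \to \infty$; hence for fixed $n$ only finitely many terms $w_{g^i(t)}(n)$ are non-zero, and the infinite sum in \eqnref{eq:diml} is really a finite sum.

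Next I would set up the telescoping. Writing $t_i = g^i(t)$ for $i \geq 0$, each $t_i$ lies in $\N'$, so the relation $w_{t_i}(n) = l_{t_i}(n) + l_{t_{i+1}}(n)$ holds for every $i \geq 0$ (again with $l_{t_{i+1}}(n) = 0$ as soon as $t_{i+1} > n$ or $t_{i+1} \notin \CT(n)$). Then I would compute the alternating sum
\be
\sum_{i=0}^{\infty} (-1)^i w_{g^i(t)}(n) = \sum_{i=0}^{\infty} (-1)^i \bigl( l_{t_i}(n) + l_{t_{i+1}}(n) \bigr),
\ee
and on the right-hand side the terms $l_{t_i}(n)$ for $i \geq 1$ cancel in pairs (the contribution $(-1)^{i-1} l_{t_i}(n)$ from the $(i-1)$st bracket against the contribution $(-1)^i l_{t_i}(n)$ from the $i$th bracket), leaving exactly $l_{t_0}(n) = l_t(n)$. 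Since the sum is finite there is no convergence subtlety, and the rearrangement is legitimate.

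One genuine point requiring care, which I expect to be the main obstacle, is the boundary behaviour: when $g(t) \notin \CT(n)$ — either because $g(t) > n$, or because $g(t) \not\equiv n \pmod 2$ (though the latter cannot happen since $g$ preserves parity), or because some later $g^i(t)$ exceeds $n$ while $g^{i-1}(t)$ does not — one must check that the formula $w_{t_i}(n) = l_{t_i}(n) + l_{t_{i+1}}(n)$ still holds with the convention that out-of-range dimensions are zero. This follows from \thmref{thm:tlcomp}(2): the statement there is that $W_t(n)$ has the extra composition factor $L_{g(t)}$ precisely when $g(t) \in \CT(n)$, and is simple otherwise, so the identity $w_{t_i}(n) = l_{t_i}(n) + l_{t_{i+1}}(n)$ is uniformly valid for all $i$ once we adopt the zero convention of \defref{def:dims}. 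I would also remark that the hypothesis $t \in \N'$ is exactly what licenses starting the induction, since a cell module indexed by an element $\equiv -1 \pmod \ell$ is simple and the corresponding generating-function statement is trivial; and I would note that for the terminal index $t_m \in \N'$ with $t_m \leq n$ but $g(t_m) \notin \CT(n)$, the relation degenerates to $w_{t_m}(n) = l_{t_m}(n)$, consistent with the telescoping. Assembling these observations gives \eqnref{eq:diml}.
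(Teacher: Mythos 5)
Your proof is correct and follows essentially the same approach as the paper: both deduce $w_{t}(n) = l_t(n) + l_{g(t)}(n)$ from Theorem~\ref{thm:tlcomp}(2), note that the strict monotonicity of $g$ makes the sum finite, and iterate/telescope to obtain \eqnref{eq:diml}. The only cosmetic difference is that the paper applies the relation recursively as $l_t(n) = w_t(n) - l_{g(t)}(n)$ while you sum the alternating series and cancel in pairs; the substance, including the care taken at the terminal index where $g(t_m) \notin \CT(n)$, is identical.
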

\begin{proof}
Note that since $g$ is a strictly increasing function on $\N'$, for any particular $n$, the sum on the right side
of \eqref{eq:diml} is finite. 

It is evident from Theorem \ref{thm:tlcomp} (2), that for any $t\in\N'$, 
\be\label{eq:lt1}
l_t(n)=w_t(n)-l_{g(t)}(n).
\ee

Applying \eqref{eq:lt1} with $t$ replaced by $g(t)$ gives $l_t(n)=w_t(n)-w_{g(t)}(n)+l_{g^2(t)}(n)$.
Applying this repeatedly, and noting that there is an integer $t_0\in\N'$ such that $t_0\leq n$ and $g(t_0)>n$, 
we obtain the relation \eqref{eq:diml}.
\end{proof}

This may be made a little more explicit by the following observation. Fix $\ell=|q^2|$ and $t\geq 0$, write 
$b(t)=b$, where $t=a\ell+b$, with $0\leq b\leq \ell-1$. Then for $t\in\N'$ we have
\be\label{eq:g}
\begin{aligned}
g(t)=&t+2\ol{b(t)}\text{ and }\\
g^2(t)=&t+2\ell,\\
\end{aligned}
\ee
where $\ol{b(t)}:=\ell-1-b(t)$.

The equation \eqref{eq:diml} may therefore be written as follows. 

\begin{corollary}\label{cor:lt}
We have the following equality of functions on $\N$:
\be\label{eq:dimL}
\begin{aligned}
l_t=&\sum_{i=0}^\infty w_{t+2i\ell}-\sum_{i=0}^\infty w_{t+2\ol{b(t)}+2i\ell}\\
=&\sum_{i=0}^\infty (w_{t+2i\ell}- w_{t+2\ol{b(t)}+2i\ell}).\\
\end{aligned}
\ee
\end{corollary}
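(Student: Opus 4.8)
The plan is to deduce Corollary \ref{cor:lt} directly from Proposition \ref{prop:diml} by unwinding the iterates $g^i(t)$ using the formula \eqref{eq:g}. First I would observe that \eqref{eq:g} gives a complete description of the orbit of $t$ under $g$: starting from $t$ with $b(t)=b$, one step of $g$ sends $t\mapsto t+2\ol{b}$ (where $\ol b=\ell-1-b$), and then — since $b(t+2\ol b)=\ell-1-b=\ol b$, so that $\ol{b(t+2\ol b)}=b$ — a second step sends $t+2\ol b\mapsto t+2\ol b+2b = t+2(\ell-1)+2 = t+2\ell$. Hence the orbit consists of the two interleaved arithmetic progressions $\{t+2i\ell\}_{i\geq 0}$ (the even iterates $g^{2i}(t)$) and $\{t+2\ol{b}+2i\ell\}_{i\geq 0}$ (the odd iterates $g^{2i+1}(t)$). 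One should check that all these values indeed lie in $\N'$, which follows because $t+2i\ell\equiv t\not\equiv -1\pmod\ell$ and $t+2\ol b+2i\ell\equiv t+2\ol b = t+2(\ell-1-b)\equiv -b+2(\ell-1-b)\equiv \ell-1-b-b\equiv\ol b - b$... actually more cleanly: $g$ maps $\N'$ to $\N'$ by construction in Theorem \ref{thm:tlcomp}, so every iterate stays in $\N'$ automatically.

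Second, I would substitute this description of the orbit into \eqref{eq:diml}. The alternating sum $\sum_{i=0}^\infty(-1)^i w_{g^i(t)}(n)$ splits, by parity of $i$, into the even-index terms (with sign $+1$) and the odd-index terms (with sign $-1$): $\sum_{i\geq 0} w_{g^{2i}(t)}(n) - \sum_{i\geq 0} w_{g^{2i+1}(t)}(n) = \sum_{i\geq 0} w_{t+2i\ell}(n) - \sum_{i\geq 0} w_{t+2\ol{b(t)}+2i\ell}(n)$. Since for each fixed $n$ all but finitely many of these $w$-values vanish (as $g$ is strictly increasing and $w_s(n)=0$ for $s>n$), regrouping the two convergent sums into a single sum $\sum_{i\geq 0}\bigl(w_{t+2i\ell}-w_{t+2\ol{b(t)}+2i\ell}\bigr)$ is valid term by term, giving the second displayed line of \eqref{eq:dimL}. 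This is an identity of functions on $\N$ because it holds at every $n$.

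There is essentially no obstacle here: the content is entirely in Proposition \ref{prop:diml} and in the elementary computation \eqref{eq:g}, and Corollary \ref{cor:lt} is just a transcription. The only point requiring a word of care is the re-indexing of the alternating sum by parity and the justification that the two resulting series may be combined termwise — but this is immediate from the finiteness already noted in the proof of Proposition \ref{prop:diml}. I would also, for completeness, verify \eqref{eq:g} itself: writing $t=a\ell+b$ with $0\le b\le\ell-2$ (the case $b=\ell-1$ being excluded since $t\in\N'$), the definition $g(t)=(a+1)\ell+(\ell-2-b)$ gives $g(t)-t=\ell+(\ell-2-b)-b=2(\ell-1-b)=2\ol{b(t)}$, and applying $g$ once more to $g(t)=(a+1)\ell+(\ell-2-b)$, whose ``$b$-part'' is $\ell-2-b$, yields $g^2(t)=(a+2)\ell+(\ell-2-(\ell-2-b))=(a+2)\ell+b=t+2\ell$, as claimed.
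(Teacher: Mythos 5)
Your argument is correct and is exactly the route the paper intends: the corollary is obtained by substituting the description of the $g$-orbit (even iterates $t+2i\ell$, odd iterates $t+2\ol{b(t)}+2i\ell$, both obtained from \eqref{eq:g} since $b(t+2i\ell)=b(t)$) into the alternating sum \eqref{eq:diml} of Proposition \ref{prop:diml} and splitting by parity, with the regrouping into a single sum justified by the finiteness of nonzero terms for each fixed $n$. One small slip: in the first paragraph you write $b(t+2\ol b)=\ell-1-b$ and $\ol{b(t+2\ol b)}=b$, whereas in fact $b(t+2\ol b)=\ell-2-b$ and $\ol{b(t+2\ol b)}=b+1$; this does not affect your conclusion, and your final paragraph verifies $g^2(t)=t+2\ell$ correctly.
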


\section{Generating functions for the cell modules.}

In this section we recall explicit generating functions for
 the dimensions of the cell modules of $\TL_n(q)$ (cf. \cite[Ch. 6]{M}). 

\subsection{Cell modules for $\TL_n$} Recall that the cell module $W_t(n)$ has a basis consisting of the monic $\TL$-diagrams $D:t\to n$.
Since such diagrams exist only when $t\equiv n(\text{mod }2)$, we may write $n=t+2k$, $k\geq 0$. 
\begin{definition}\label{def:wtk}
For $t,k\geq 0$, we write $w(t,k):=\dim W_t(t+2k)$. By convention, $W_0(0)=0$, so that $w(0,0)=0$.
Note that by Definition \ref{def:dims}, $w(t,k)=w_t(t+2k)$.
\end{definition}

\begin{proposition}\label{prop:recw}
We have the following recursion for $w(t,k)$. For integers $t,k\geq 0$:
\be\label{eq:recw}
w(t,k+1)=w(t-1,k+1)+w(t+1,k).
\ee
\end{proposition}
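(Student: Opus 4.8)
The plan is to prove \eqref{eq:recw} bijectively, using the diagram basis of the cell modules recalled in \S\ref{ss:cmod}. Write $m:=t+2(k+1)$, so that $w(t,k+1)=\dim W_t(m)$ counts the monic diagrams $D:t\to m$. Recall that such a $D$ is a planar matching of $t$ source points and $m$ target points with no strand joining two source points; it therefore has exactly $t$ propagating strands and $k+1$ arcs among target points. By planarity, the propagating strands join the $t$ uncapped target points to the $t$ source points in left-to-right order, and no arc may enclose an uncapped target point (a \emph{defect}), since that defect's strand to the source row would cross the arc. I will classify the diagrams $D:t\to m$ according to whether the rightmost target point $m$ is a defect or an arc-endpoint, and exhibit a bijection in each case.

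Suppose first that $m$ is a defect. Being the rightmost target point, it is joined to the rightmost source point; deleting both of these and the strand between them yields a monic diagram $t-1\to m-1$ retaining all $k+1$ arcs. Since $m-1=(t-1)+2(k+1)$, this is a basis diagram of $W_{t-1}(m-1)$, and the construction is reversed by adjoining a propagating strand between a new rightmost target point and a new rightmost source point. Hence the diagrams of this type number $w(t-1,k+1)$.

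Suppose instead that $m$ is an arc-endpoint, necessarily the right endpoint of an arc joining it to a target point $j<m$. The planarity facts above force every target point strictly between $j$ and $m$ to be an arc-endpoint whose partner also lies strictly between $j$ and $m$ (an arc with just one endpoint in $(j,m)$ would cross the $j$--$m$ arc, and a defect in $(j,m)$ would be enclosed by it), and in particular force all $t$ defects of $D$ to lie to the left of $j$. Deleting the target point $m$ and the arc $j$--$m$ turns $j$ into a defect, necessarily the rightmost one; to recover a monic diagram one also adjoins a new rightmost source point joined to $j$. Since $m-1=(t+1)+2k$, the result is a basis diagram of $W_{t+1}(m-1)$ with $k$ arcs. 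Conversely, starting from a monic diagram $t+1\to m-1$, one takes its rightmost defect $j$ (so that $j+1,\dots,m-1$ automatically form a self-contained balanced system of arcs), adjoins a new rightmost target point $m$, joins $j$ to $m$ by an arc enclosing that system, and deletes the source point formerly attached to $j$; this is inverse to the previous map. Hence the diagrams of this type number $w(t+1,k)$.

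Adding the two counts gives $w(t,k+1)=w(t-1,k+1)+w(t+1,k)$. When $t=0$ there are no defects, the first case is vacuous, and the identity reads $w(0,k+1)=w(1,k)$, in agreement with the convention $w(-1,\cdot)=0$; note that $w(0,0)$, which is set to $0$ by convention, never occurs in the recursion. I expect the only genuinely delicate point to be the second case: one must use planarity carefully to check that the arcs lying under $j$--$m$ form a balanced block, that $j$ becomes the rightmost defect after deletion, and conversely that capping off the rightmost defect of a $W_{t+1}(m-1)$-diagram produces a legitimate monic diagram — precisely the facts that make the two constructions mutually inverse. Alternatively, one may read \eqref{eq:recw} off the restriction rule $\Res^{\TL_m}_{\TL_{m-1}}W_t(m)\cong W_{t-1}(m-1)\oplus W_{t+1}(m-1)$ for cell modules, of which it is the dimension shadow.
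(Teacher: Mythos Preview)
Your argument is correct: the two-case split on the rightmost target point, together with the planarity facts you invoke (propagating strands preserve left-to-right order; no arc encloses a defect; the block under the outermost arc is balanced), gives a clean bijection establishing \eqref{eq:recw}. The edge case $t=0$ and the non-appearance of $w(0,0)$ are handled correctly.

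The paper's proof is also bijective but uses a different decomposition. It first flattens a monic diagram $t\to t+2(k+1)$ to a diagram $0\to 2t+2(k+1)$ on points $a_1,\dots,a_t,b_1,\dots,b_{t+2(k+1)}$, the monicity condition becoming that every arc crosses the dividing line between $a_t$ and $b_1$. The two cases are then: (i) $[a_t,b_1]$ is an arc, in which case deleting it gives $w(t-1,k+1)$; and (ii) $[a_t,b_1]$ is not an arc, in which case sliding the dividing line one step to the right yields $w(t+1,k)$. So the paper classifies by what happens at the \emph{inner boundary} (between source and target blocks in the flattened picture), whereas you classify by what happens at the \emph{outer boundary} (the last target point). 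Already for $t=1$, $k=0$ the two classifications partition the same two diagrams in opposite ways, so these really are distinct bijections. Your approach is the more standard ``last point'' recursion and avoids the flattening device; the paper's approach is perhaps more natural from the Temperley--Lieb category viewpoint, where the flattening is a basic operation. Your final remark, that \eqref{eq:recw} is the dimension shadow of the branching rule $\Res^{\TL_m}_{\TL_{m-1}}W_t(m)\cong W_{t-1}(m-1)\oplus W_{t+1}(m-1)$, is a third and arguably the most conceptual route.
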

\begin{proof}
The proof is based on the interpretation of $w(t,k)$ as the number of monic $\TL$-diagrams from $t$ to $t+2k$.

Consider first the case $t=0$. The assertion is then that $w(0,k+1)=w(1,k)$. But all $\TL$-diagrams $D:1\to 1+2k$ are monic,
as are all diagrams $0\to 2\ell$ (any $\ell$). It follows that $w(1,k)=\dim(\Hom_\bT(1,1+2k))=\dim(\Hom_\bT(0,2+2k))=w(0,k+1)$.
Thus the assertion is true for $t=0$ and all $k\geq 0$. Similarly, if $k=0$, the assertion amounts to $w(t,1)=w(t-1,1)+w(t+1,0)$.
If $t>0$, the left side is easily seen to be equal to $t+1$, while $w(t-1,1)=t$ and $w(t+1,0)=1$. If $t=0$, the left side is 
equal to $\dim(\Hom_\bT(0,2k+2))=\dim(\Hom_\bT(1,2k+1))=w(1,k)$. So the recurrence is valid for $k=0$ and all $t$.

Now consider the general case. Our argument will use the fact that $w(t,k+1)$ may be thought of as the number of 
$\TL$-diagrams $0\to 2t+2(k+1)$ of the form depicted in Fig. 1.

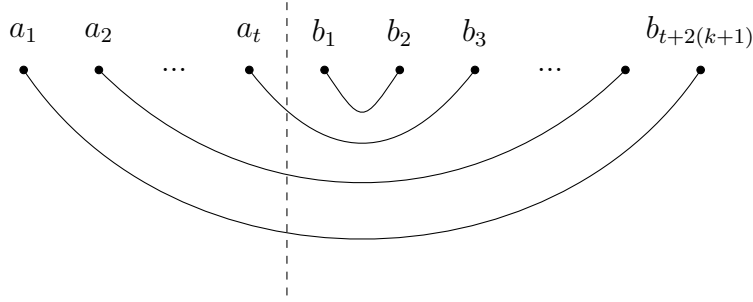
\begin{figure}
\begin{tikzpicture}
\foreach \x in {1,2,4,5,6,7, 9,10}
\filldraw(\x,0) circle (0.05cm);
\draw [dashed] (4.5,-3)--(4.5,1);
\node at (1,.5) {$a_1$}; \node at (2,.5) {$a_2$};\node at (4,.5) {$a_t$};
\node at (5,.5) {$b_1$};\node at (6,.5) {$b_2$};\node at (10,.5) {$b_{t+2(k+1)}$};\node at (7,.5) {$b_3$};
\node at (3,0) {$...$};\node at (8,0) {$...$};
\draw (1,0).. controls (3,-3) and (8,-3).. (10,0); 
\draw (2,0).. controls (4,-2) and (7,-2).. (9,0);
\draw (4,0).. controls (5,-1.3) and (6,-1.3).. (7,0);
\draw (5,0).. controls (5.5,-.75) and (5.5,-.75).. (6,0);
\end{tikzpicture}
\caption{Monic diagram $t\to t+2(k+1)$ as a diagram $0\to 2t+2(k+1)$.}
\end{figure}

The condition that the diagram be monic is simply that each $a_i$ is joined to some $b_j$, i.e. that each arc crosses the 
dotted line; of course distinct arcs are non-intersecting.

Evidently such diagrams fall into two types: those in which $[a_t,b_1]$ is an arc, and the others. Now the number of diagrams 
in which $[a_t,b_1]$ is an arc is clearly equal to $w(t-1, k+1)$, while those in which $[a_t,b_1]$ is not an arc are in bijection with
the monic diagrams from $t+1$ to $t+1+2k$, as is seen by shifting the dotted line one unit to the right. Hence the number of the latter 
is $w(t+1,k)$, and the recurrence \eqref{eq:recw} is proved.
\end{proof}

\subsection{A binomial expression for $w(t,k)$} 
\begin{definition}\label{def:bin}
For integers $t,k\geq 0$, define 
\be\label{eq:bin}
F(t,k)=\binom{t+2k}{k}-\binom{t+2k}{k-1}.
\ee
This definition is extended to the domain $\Z\times\Z$ by stipulating that $F(t,k)=0$ if $t<0$ or $k<0$.
\end{definition}

It is easily seen that 
\be\label{eq:bin1}
F(t,k)=\frac{(t+1)(t+2k)(t+2k-1)\dots(t+k+2)}{k!}=\frac{t+1}{t+k+1}\binom{t+2k}{k},
\ee
and that
\begin{lemma}\label{lem:recf}
We have the following recursion for $F(t,k)$. For $t,k\geq 0$:
\be\label{eq:recf}
F(t,k+1)=F(t-1,k+1)+F(t+1,k).
\ee
\end{lemma}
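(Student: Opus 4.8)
The plan is to verify the recursion \eqref{eq:recf} by a direct manipulation of binomial coefficients, exploiting the Pascal-type identity $\binom{m+1}{j}=\binom{m}{j}+\binom{m}{j-1}$. First I would dispose of the boundary cases: if $t=0$ the claimed identity reads $F(0,k+1)=F(-1,k+1)+F(1,k)=F(1,k)$, which amounts to $\binom{2k+2}{k+1}-\binom{2k+2}{k}=\binom{2k+1}{k}-\binom{2k+1}{k-1}$; this follows by applying Pascal's rule to each term on the left and cancelling. If $k=0$ the identity reads $F(t,1)=F(t-1,1)+F(t+1,0)$, i.e. $(t+1)^2 - \text{(something)}$; using \eqref{eq:bin} one computes $F(t,1)=\binom{t+2}{1}-\binom{t+2}{0}=t+1$, $F(t-1,1)=t$, and $F(t+1,0)=\binom{t+1}{0}-\binom{t+1}{-1}=1$, so the identity $t+1=t+1$ holds (and for $t=0$ one uses the extension convention $F(-1,1)=0$, giving $1=0+1$).

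For the generic case $t,k\geq 1$, I would write everything in terms of binomials in a common ``top'' argument. Note that each of $F(t,k+1)$, $F(t-1,k+1)$, $F(t+1,k)$ has the form $\binom{N}{j}-\binom{N}{j-1}$ with $N\in\{t+2k+2,\ t+2k+1\}$. Specifically $F(t,k+1)=\binom{t+2k+2}{k+1}-\binom{t+2k+2}{k}$, $F(t-1,k+1)=\binom{t+2k+1}{k+1}-\binom{t+2k+1}{k}$, and $F(t+1,k)=\binom{t+2k+1}{k}-\binom{t+2k+1}{k-1}$. Applying Pascal's rule $\binom{t+2k+2}{j}=\binom{t+2k+1}{j}+\binom{t+2k+1}{j-1}$ to the two terms of $F(t,k+1)$ gives
\be
F(t,k+1)=\binom{t+2k+1}{k+1}+\binom{t+2k+1}{k}-\binom{t+2k+1}{k}-\binom{t+2k+1}{k-1}=\binom{t+2k+1}{k+1}-\binom{t+2k+1}{k-1}.
\ee
The right-hand side $F(t-1,k+1)+F(t+1,k)$ telescopes to the same expression $\binom{t+2k+1}{k+1}-\binom{t+2k+1}{k-1}$, since the two middle terms $-\binom{t+2k+1}{k}$ and $+\binom{t+2k+1}{k}$ cancel. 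This proves \eqref{eq:recf}.

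Alternatively — and this is probably the cleaner route to record — one can observe that \eqref{eq:recf} is formally identical to the recursion \eqref{eq:recw} satisfied by $w(t,k)$, and that $F$ and $w$ agree on the boundary (both have $F(0,0)=w(0,0)=0$ by convention, $F(t,0)=w(t,0)=1$ for $t\geq 1$, and $F(0,k)=w(0,k)$ for all $k$, each being a Catalan-type number), so that an induction on $k$ identifies $F(t,k)=w(t,k)$ and the recursion for $F$ is inherited. However, since the lemma as stated asks only for the recursion on $F$ itself, the self-contained binomial computation above suffices and avoids circularity. I do not anticipate a genuine obstacle here: the only mild care needed is bookkeeping of the extension convention $F(t,k)=0$ for $t<0$ or $k<0$ so that the identity remains valid on the boundary of the first quadrant, in particular at $t=0$ where $F(-1,k+1)=0$ must be invoked.
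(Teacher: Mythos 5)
The paper states this lemma without proof (it appears right after ``It is easily seen that\dots''), so there is no argument to compare against; the intended proof is clearly the direct binomial verification you give. Your main computation is correct: reducing everything to binomials with top $t+2k+1$, applying Pascal's rule to the two terms of $F(t,k+1)$, and observing that $F(t-1,k+1)+F(t+1,k)$ telescopes to $\binom{t+2k+1}{k+1}-\binom{t+2k+1}{k-1}$ gives exactly the identity, and the boundary conventions $F(t,k)=0$ for $t<0$ or $k<0$ together with $\binom{m}{-1}=0$ make the same calculation valid at $t=0$ and $k=0$.

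Two small cosmetic points, neither affecting correctness. First, the phrase ``i.e.\ $(t+1)^2-\text{(something)}$'' in your $k=0$ case is a stray remnant and should be deleted; the computation you then carry out ($t+1=t+1$) is the right one. Second, your aside about identifying $F$ with $w$ misstates the boundary value: under the paper's conventions $F(0,0)=\binom{0}{0}-\binom{0}{-1}=1$, whereas $w(0,0)=0$ by Definition~\ref{def:wtk}, so they do \emph{not} agree at the origin. (The paper itself has a slight tension here, since the proof of Theorem~\ref{thm:dimw} asserts $w(t,0)=1$ for all $t\geq 0$.) In any case that alternative route would be circular, since Theorem~\ref{thm:dimw} deduces $w=F$ \emph{from} this lemma, as you yourself observe; so it is right to let the self-contained binomial computation stand as the proof.
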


\subsection{Catalan calculus--generating functions} For $n\geq 0$, write $c(n):=w(0,2n)$, $c(0)=1$.
It is easily seen by inspecting diagrams that for $n>0$, 
\be\label{eq:recc}
c(n)=\sum_{k=1}^nc(k-1)c(n-k).
\ee
Writing $c(x):=\sum_{n=0}^\infty c(n)x^n$, the recursion \eqref{eq:recc} translates into
\be\label{eq:reccx}
xc(x)^2-c(x)+1=0,
\ee
from which it is immediate that
\be\label{eq:cx}
c(x)=\frac{1-(1-4x)^{\frac{1}{2}}}{2x},
\ee
and applying the binomial expansion, that
\be\label{eq:cn}
c(n)=\frac{1}{n+1}\binom{2n}{n}.
\ee

Now define $W_t(x)=\sum_{k=0}^\infty w(t,k)x^k=\sum_{k=0}^\infty w_t(t+2k)x^k$. Inspection of diagrams shows that the $w(t,k)$ satisfy the following
recursion.
\be\label{eq:recwc}
w(t,k)=\sum_{\ell=0}^kw(t-1,\ell)c(k-\ell),
\ee
which translates into the recursion $W_t(x)=W_{t-1}(x)c(x)$ for the generating function $W_t(x)$. Using the fact that
$W_0(x)=c(x)$, we have proved the following statement.
\begin{proposition}\label{prop:wx}
For $t=0,1,2,\dots$, we have $W_t(x)=\sum_{k=0}^\infty w(t,k)x^k=c(x)^{t+1}$.
\end{proposition}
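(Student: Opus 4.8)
The plan is to establish the generating function identity $W_t(x)=c(x)^{t+1}$ by induction on $t$, using the convolution recursion for the dimensions $w(t,k)$ that is already singled out in the text. The base case $t=0$ is immediate, since $W_0(x)=\sum_{k=0}^\infty w(0,k)x^k=\sum_{n=0}^\infty c(n)x^n=c(x)$ by the very definition $c(n)=w(0,2n)$ together with the observation that $w(0,k)$ vanishes for odd $k$ — or, more precisely, one should note that the $W_t(x)$ here are indexed so that the $k$-th coefficient is $w(t,k)=\dim W_t(t+2k)$, and for $t=0$ this reads $w(0,k)=\dim W_0(2k)=c(k)$, so indeed $W_0(x)=c(x)$.

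For the inductive step, the key is the recursion
\be
w(t,k)=\sum_{\ell=0}^{k}w(t-1,\ell)\,c(k-\ell),
\ee
which I would prove by a diagram-counting argument entirely parallel to the proof of Proposition~\ref{prop:recw}: a monic $\TL$-diagram from $t$ to $t+2k$ is determined by where the first (leftmost) bottom vertex attaches. Either it connects to one of the other bottom vertices, enclosing a nested system of arcs among an even number $2(k-\ell)$ of bottom points — there are $c(k-\ell)$ such nested configurations, these being exactly the planar matchings counted by the Catalan number — and the remainder is a monic diagram from $t-1$ to $t-1+2\ell$; or one tracks the leftmost point through and uses the resulting bijection with monic diagrams on fewer strands. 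Summing over the size $\ell$ of the ``through'' part gives the stated convolution. Translating this coefficientwise identity into generating functions yields $W_t(x)=W_{t-1}(x)\,c(x)$, and the induction hypothesis $W_{t-1}(x)=c(x)^t$ then gives $W_t(x)=c(x)^{t+1}$, completing the proof.

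The only genuinely delicate point is verifying the combinatorial recursion \eqref{eq:recwc} cleanly, i.e.\ getting the decomposition of monic diagrams and the associated bijection exactly right so that the Catalan factor $c(k-\ell)$ appears with the correct index and no off-by-one or parity error. Everything after that is a routine manipulation of formal power series. As a sanity check one can confirm $\dim W_t(t+2k)=F(t,k)=\binom{t+2k}{k}-\binom{t+2k}{k-1}$ (Definition~\ref{def:bin}), since the coefficient of $x^k$ in $c(x)^{t+1}$ is the well-known ballot number $\frac{t+1}{t+k+1}\binom{t+2k}{k}$, matching \eqref{eq:bin1}; this consistency with the $F(t,k)$ recursion of Lemma~\ref{lem:recf} provides independent corroboration of the formula, though it is not needed for the proof itself.
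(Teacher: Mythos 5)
Your overall strategy is exactly the paper's: establish the convolution $w(t,k)=\sum_{\ell=0}^{k}w(t-1,\ell)c(k-\ell)$, translate it to $W_t(x)=W_{t-1}(x)c(x)$, and iterate from the base case $W_0(x)=c(x)$. The paper simply asserts the convolution by ``inspection of diagrams,'' so your attempt to spell out the combinatorics is welcome, but the decomposition you sketch is mis-stated. In a monic diagram $D:t\to t+2k$ every one of the $t$ source (``bottom'') points is the endpoint of a through string, so the leftmost source point $a_1$ never ``connects to one of the other bottom vertices''; there is no case split of the kind you describe. The correct picture is: $a_1$ attaches to some target point $b_j$, planarity forces the target points $b_1,\dots,b_{j-1}$ lying to the left of that through string to form a closed planar matching on $2(k-\ell)$ points (giving the Catalan factor $c(k-\ell)$), and the remaining target points $b_{j+1},\dots,b_{t+2k}$ together with $a_2,\dots,a_t$ form a monic diagram from $t-1$ to $(t-1)+2\ell$, giving $w(t-1,\ell)$. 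Summing over $\ell$ yields the convolution. You rightly flag this as the delicate step, but as written your ``either/or'' has a branch that cannot occur and attributes the Catalan-counted arc system to the wrong side of the diagram; with that corrected, your proof coincides with the paper's.
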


\begin{corollary}\label{cor:wxy}
We have the following equation in $\Z[[x,y]]$.
\be\label{eq:wxy}
W(x,y):=\sum_{t,k=0}^\infty w(t,k)y^tx^k=\frac{c(x)}{1-yc(x)}.
\ee
\end{corollary}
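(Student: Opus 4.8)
The plan is to derive \eqref{eq:wxy} directly from \propref{prop:wx}, treating the double generating function as a geometric series in $y$ with coefficients given by the single-variable generating functions $W_t(x)$.

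First I would write, using the definition of $W(x,y)$ and reorganising the double sum by first summing over $k$ for each fixed $t$,
\be
W(x,y)=\sum_{t=0}^\infty\left(\sum_{k=0}^\infty w(t,k)x^k\right)y^t=\sum_{t=0}^\infty W_t(x)\,y^t.
\ee
This reindexing is legitimate in $\Z[[x,y]]$ since for each monomial $y^tx^k$ the coefficient $w(t,k)$ is extracted from a single term of the sum, so no infinite sums of coefficients occur. Next I would substitute the closed form $W_t(x)=c(x)^{t+1}$ from \propref{prop:wx}, obtaining
\be
W(x,y)=\sum_{t=0}^\infty c(x)^{t+1}y^t=c(x)\sum_{t=0}^\infty\bigl(yc(x)\bigr)^t.
\ee
Finally I would sum the geometric series. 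Here one must be slightly careful: $c(x)\in\Z[[x]]$ has constant term $c(0)=1$, so $yc(x)$ has no constant term as an element of $\Z[[x,y]]$ (every monomial in $yc(x)$ contains at least one factor of $y$), and hence $\sum_{t\geq 0}(yc(x))^t=\bigl(1-yc(x)\bigr)^{-1}$ is a well-defined element of $\Z[[x,y]]$. This yields
\be
W(x,y)=\frac{c(x)}{1-yc(x)},
\ee
which is \eqref{eq:wxy}.

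I do not expect any serious obstacle; the only point requiring a word of justification is that the manipulations take place in the formal power series ring $\Z[[x,y]]$, so one should note that the rearrangement of the double sum and the geometric series expansion are both valid there because $yc(x)$ lies in the maximal ideal $(y)\subseteq\Z[[x,y]]$ (indeed in $(x,y)$), making $1-yc(x)$ invertible. If one preferred, the identity could alternatively be verified by checking that multiplying both sides by $1-yc(x)$ gives $\sum_{t,k}w(t,k)y^tx^k - \sum_{t,k}w(t,k)y^{t+1}x^k c(x) = c(x)$, i.e. $W_0(x)=c(x)$ together with the recursion $W_t(x)=W_{t-1}(x)c(x)$ already established before \propref{prop:wx}; but the direct geometric-series computation is cleaner and is the route I would take.
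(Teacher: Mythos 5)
Your proof is correct and is exactly the immediate deduction the paper intends: the corollary follows by substituting $W_t(x)=c(x)^{t+1}$ from Proposition~\ref{prop:wx} and summing the resulting geometric series in $yc(x)$, which is precisely your computation. The paper offers no separate argument for this corollary, treating it as immediate, and your remarks on well-definedness in $\Z[[x,y]]$ are a harmless (and sound) extra justification.
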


\subsection{A closed expression for $w(t,k)$} We shall prove the following theorem.

\begin{theorem}\label{thm:dimw}
For integers $t,k\geq 0$, we have 
\be\label{eq:wf}
w(t,k)=F(t,k).
\ee
 That is,
\be\label{eq:dimw}
\dim(W_t(t+2k))=\frac{t+1}{t+k+1}\binom{t+2k}{k}.
\ee
\end{theorem}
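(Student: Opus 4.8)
The plan is to prove $w(t,k)=F(t,k)$ by showing that both sides satisfy the same recursion together with the same boundary data, and then invoking uniqueness of the solution. By Proposition~\ref{prop:recw} the numbers $w(t,k)$ satisfy $w(t,k+1)=w(t-1,k+1)+w(t+1,k)$, and by Lemma~\ref{lem:recf} the numbers $F(t,k)$ satisfy the identical recursion $F(t,k+1)=F(t-1,k+1)+F(t+1,k)$. So the entire content of the theorem is the assertion that these recursions, supplemented by suitable initial/boundary conditions, have a unique solution, and that $w$ and $F$ agree on that boundary.

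First I would set up the boundary data carefully. The recursion \eqref{eq:recw} expresses $w(t,k+1)$ in terms of entries with strictly smaller second coordinate ($w(t+1,k)$) and entries with the same second coordinate but strictly smaller first coordinate ($w(t-1,k+1)$). Hence an induction on $k$, with an inner induction on $t$, will pin down all values once we know: (i) the column $k=0$, i.e. $w(t,0)$ for all $t\ge 0$; and (ii) the ``left edge'' $w(0,k)$ for all $k\ge 0$. For $w$: a monic diagram $t\to t$ must be $\id_t$, so $w(t,0)=1$ for $t\ge 1$, while $w(0,0)=0$ by the convention in Definition~\ref{def:wtk}; and $w(0,k)=c(\lceil k/2\rceil$-type data$)$—more precisely $w(0,2n)=c(n)$ and $w(0,2m+1)=0$, which from Proposition~\ref{prop:wx} is just the coefficient extraction from $W_0(x)=c(x)$. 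For $F$: directly from \eqref{eq:bin}, $F(t,0)=\binom{t}{0}-\binom{t}{-1}=1$ for $t\ge 0$ except we must double-check $F(0,0)=1$ whereas $w(0,0)=0$—so the naive ``same boundary'' claim fails at exactly one spot and needs care.

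The cleanest way around that single discrepancy is to not match raw boundary values but instead to match generating functions, which is in fact the route the surrounding text has already prepared. By Proposition~\ref{prop:wx}, $W_t(x)=\sum_k w(t,k)x^k=c(x)^{t+1}$. So it suffices to show $\sum_{k\ge 0}F(t,k)x^k=c(x)^{t+1}$ as well. For this I would use \eqref{eq:bin1}, $F(t,k)=\frac{t+1}{t+k+1}\binom{t+2k}{k}$, and recognise the right-hand side: the generating function $\sum_k\frac{1}{k+1}\binom{2k}{k}x^k=c(x)$ is \eqref{eq:cn}/\eqref{eq:cx}, and more generally the known Catalan-type identity $\sum_{k\ge 0}\frac{t+1}{t+k+1}\binom{t+2k}{k}x^k=c(x)^{t+1}$ (equivalently, the coefficient of $x^k$ in $c(x)^{t+1}$ is $\frac{t+1}{2k+t+1}\binom{2k+t+1}{k}=F(t,k)$) follows from the Lagrange inversion formula applied to $c(x)=1+xc(x)^2$ from \eqref{eq:reccx}. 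Alternatively, and perhaps more in keeping with the elementary style here, one can verify that $G_t(x):=\sum_k F(t,k)x^k$ satisfies $G_t(x)=G_{t-1}(x)c(x)$ by translating the recursion \eqref{eq:recf} into generating-function form exactly as \eqref{eq:recwc} was translated, check $G_0(x)=c(x)$ using \eqref{eq:cn} (here $F(0,k)=\frac{1}{k+1}\binom{2k}{k}=c(k)$—note this now correctly gives $F(0,0)=1=c(0)$, since the offending convention $w(0,0)=0$ only affected $W_0$ as a cell-module dimension, not the Catalan number $c(0)$), and conclude $G_t(x)=c(x)^{t+1}=W_t(x)$ for all $t$, whence $F(t,k)=w(t,k)$ for all $t,k\ge 0$.

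The main obstacle is essentially bookkeeping rather than depth: one must handle the $k=0$ (and particularly the $(0,0)$) boundary correctly, since the stated convention $W_0(0)=0$ clashes with $F(0,0)=1$ and with $c(0)=1$. The resolution is that this convention is only a normalisation of the cell module $W_0(0)$ and does not propagate—every other $w(t,k)$ with $(t,k)\ne(0,0)$ is a genuine count of monic diagrams and matches $F(t,k)$—so the identity \eqref{eq:wf} as stated should really be read as holding for $(t,k)\ne(0,0)$, or one simply observes that the generating function route sidesteps the issue because $W_0(x)=c(x)$ already builds in $c(0)=1$. I would flag this explicitly. Apart from that, the Lagrange-inversion identity (or the equivalent recursion check for $G_t$) is standard and short.
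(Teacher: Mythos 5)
Your proposal takes a genuinely different route from the paper. Both rest on the observation that $w$ and $F$ obey the same Pascal-type recursion \eqref{eq:recw}/\eqref{eq:recf}, but the paper then runs a direct elementary induction on $t$: it verifies the cases $k=0$ (where $w(t,0)=F(t,0)=1$) and $t=0$ (where $w(0,k)=F(0,k)=c(k)$), rewrites the recursion as $F(t_0+1,k)=F(t_0,k+1)-F(t_0-1,k+1)$ (and likewise for $w$), and concludes that $\mathbf P(t_0)\Rightarrow\mathbf P(t_0+1)$. Your primary route---extract $[x^k]c(x)^{t+1}$ by Lagrange inversion from $c=1+xc^2$ and identify it with $\frac{t+1}{t+k+1}\binom{t+2k}{k}$---is correct and arguably slicker, at the cost of invoking a non-elementary tool, while the paper's stays entirely within binomial manipulations. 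On the $(0,0)$ discrepancy you flag: you are right that Definition~\ref{def:wtk} asserts the convention $W_0(0)=0$, which clashes with $F(0,0)=c(0)=1$; but the paper's own proof takes $w(t,0)=1$ for \emph{all} $t\geq 0$, and $\Hom_{\bT}(0,0)=\C$ is one-dimensional, so that convention is plainly a slip, and the identity does hold at $(0,0)$ with $w(0,0)=1$. You need not restrict to $(t,k)\neq(0,0)$.

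There is one genuine gap in the alternative route you sketch. You claim $G_t(x)=G_{t-1}(x)c(x)$ follows ``by translating the recursion \eqref{eq:recf} into generating-function form exactly as \eqref{eq:recwc} was translated.'' But these are recursions of different shape: the Catalan-convolution relation \eqref{eq:recwc}, $w(t,k)=\sum_\ell w(t-1,\ell)c(k-\ell)$, translates directly to the two-term product $W_t(x)=W_{t-1}(x)c(x)$, whereas the Pascal-type relation \eqref{eq:recf} translates only to the three-term relation $G_t(x)=G_{t-1}(x)+xG_{t+1}(x)$. The latter does not give $G_t=G_{t-1}c(x)$ on its own. The argument is repairable---verify $G_0(x)=c(x)$ and $G_1(x)=c(x)^2$ directly and then induct on $t$ using the three-term relation, which is consistent with $G_t=c(x)^{t+1}$ precisely because $c=1+xc^2$---but as written the step would fail: you would not have established any convolution identity for $F$ from \eqref{eq:recf} alone. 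Your primary Lagrange-inversion route avoids this issue entirely and should be the one you commit to.
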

\begin{proof}
Consider first the case $k=0$. Then for any $t\geq 0$, $w(t,0)=1$, while $F(t,0)=\frac{1}{1}\binom{t}{0}=1.$
Thus the result is true for $k=0$ and all $t$. Next consider the case $t=0$. Then for all $k\geq 0$, 
$w(0,k)=c(k)=\frac{1}{k+1}\binom{2k}{k}$, while $w(0,k)$ is evidently also equal to $c(k)$ by  
\eqref{eq:bin1}. Thus the assertion is true for $t=0$ and all $k$.

Now suppose that the following assertion is true.

\noindent {$\mathbf P(t_0)$:} {\it The result is true for pairs $(t,k)$ such that $t\leq t_0$ (and any $k\geq 0$).}

We now use the recursions \eqref{eq:recw} and \eqref{eq:recf}, in the following form.
\be\label{eq:comprec}
\begin{aligned}
F(t_0+1,k)=&F(t_0,k+1)-F(t_0-1,k+1)\text{ and }\\
w(t_0+1,k)=&w(t_0,k+1)-w(t_0-1,k+1).\\
\end{aligned}
\ee

By assumption, the right sides of \eqref{eq:comprec} are equal, whence $F(t_0+1,k)=w(t_0+1,k)$, i.e. $\mathbf P(t_0+1)$ 
holds. Since we have seen that $\mathbf P(0)$ is true, the result follows.
\end{proof}

\section{Generating functions for the simple $\TL_n(q)$-modules.}

We assume throughour this section that the order $|q^2|=\ell\in\N$, and $\ell\geq 4$.

Recall (Definition \ref{def:dims}) that $l_t(n)=\dim(L_t(n))$ is non-zero only if $n=t+2k$ for some integer $k\geq 0$. 
For any integer $t\geq 0$ define
\be\label{eq:ltx}
L_t(x)=L_t^{(\ell)}(x)=\sum_{k=0}^\infty l_t(t+2k)x^k.
\ee

In this section we shall give explicit formulae for the power series $L_t^{(\ell)}(x)$.

\subsection{A recurrence for the functions $l_t$}
We maintain the following notation, which was introduced in \S\ref{s:tl}. 

\noindent{\bf Notation.} Recall that $\N'=\{t\in\N\mid t\not\equiv \ell-1\text{(mod }\ell)\}$, and for $t\in\N'$, $b(t)=b$,
where $t=a\ell+b$ with $0\leq b\leq \ell-2$. Write $\cR=\{0,1,2,\dots,\ell-2\}$,
$\ol\cR=\{1,2,3,\dots,\ell-1\}$ and $b\mapsto \ol b$ for the bijection $\cR\to\ol\cR$ given by $\ol b=\ell-1-b$.

\begin{proposition}\label{prop:red} Let $t\in\N$ and assume below that $n\equiv t-1\text {(mod }2)$.
\begin{enumerate}
\item If $b(t)\in\cR$ (i.e. $t\in\N'$) and $b(t)\neq 0\text{ or }\ell-2$, then 
\be\label{eq:redgen}
l_t(n+1)=l_{t-1}(n)+l_{t+1}(n).
\ee
\item If $b(t)=0$ and $n$ is odd, we have
\be\label{eq:red0}
l_t(n+1)=w_{t-1}(n)+l_{t+1}(n).
\ee
\item For $t\in\N$ with $b(t)=\ell-2$, we have, for $n\equiv t-1\text {(mod }2)$,
\be\label{eq:redl-2}
l_{t}(n+1)=l_{t-1}(n).
\ee
\end{enumerate}
\end{proposition}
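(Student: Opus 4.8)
The plan is to deduce all three statements from the branching recursion for cell modules (Proposition~\ref{prop:recw}, rewritten as $w_t(n+1)=w_{t-1}(n)+w_{t+1}(n)$ whenever $n\equiv t-1\pmod 2$) combined with the closed form for $l_t$ as an alternating sum of $w$'s from Corollary~\ref{cor:lt}. The point is that $l_t = \sum_{i\ge 0}\bigl(w_{t+2i\ell} - w_{t+2\ol{b(t)}+2i\ell}\bigr)$, and the function $g$ has the self-similar structure $g(t)=t+2\ol{b(t)}$, $g^2(t)=t+2\ell$ recorded in \eqref{eq:g}; since the branching rule is additive in $t$, I expect the recursion for the $w$'s to propagate through the alternating sum, up to boundary corrections coming from the terms where $t\pm1$ falls outside $\N'$ (i.e. where $b(t\pm1)=\ell-1$).

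First I would write, using Corollary~\ref{cor:lt} and the $w$-recursion applied term by term,
\[
l_t(n+1)=\sum_{i=0}^\infty\Bigl(w_{t-1+2i\ell}(n)+w_{t+1+2i\ell}(n) - w_{t-1+2\ol{b(t)}+2i\ell}(n) - w_{t+1+2\ol{b(t)}+2i\ell}(n)\Bigr),
\]
and then try to regroup the right-hand side into the $w$-expansions of $l_{t-1}(n)$ and $l_{t+1}(n)$. In case (1), where $1\le b(t)\le \ell-3$, one has $b(t-1)=b(t)-1$ and $b(t+1)=b(t)+1$, both lying in $\{0,\dots,\ell-2\}$, so $t-1,t+1\in\N'$; moreover $\ol{b(t-1)}=\ol{b(t)}+1$ and $\ol{b(t+1)}=\ol{b(t)}-1$, so that $(t-1)+2\ol{b(t-1)} = t+1+2\ol{b(t)}$ and $(t+1)+2\ol{b(t+1)} = t-1+2\ol{b(t)}$. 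This is exactly the matching needed: the four families above reorganize precisely into $\sum_i\bigl(w_{(t-1)+2i\ell}-w_{(t-1)+2\ol{b(t-1)}+2i\ell}\bigr) + \sum_i\bigl(w_{(t+1)+2i\ell}-w_{(t+1)+2\ol{b(t+1)}+2i\ell}\bigr) = l_{t-1}(n)+l_{t+1}(n)$, giving \eqref{eq:redgen}. For case (2), $b(t)=0$ forces $t-1\equiv\ell-1\pmod\ell$, so $t-1\notin\N'$ and $L_{t-1}=W_{t-1}$ (the cell module is simple by Theorem~\ref{thm:tlcomp}(2)), hence the $w_{t-1}(n)$ family has no subtracted partner; one checks the $t+1$ family still matches $l_{t+1}(n)$ as before (here $\ol{b(t)}=\ell-1$, so $t+1+2\ol{b(t)} = t-1+2\ell$, which is the first subtracted term attached to the $w_{t+1}$-family shifted appropriately), yielding $l_t(n+1)=w_{t-1}(n)+l_{t+1}(n)$. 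For case (3), $b(t)=\ell-2$ gives $b(t+1)=\ell-1$, so $t+1\notin\N'$, and additionally $\ol{b(t)}=1$, so $g(t)=t+2$; the added $w_{t+1}$-family cancels against the subtracted $w_{t+1+2i\ell}$ shifted by $2\ol{b(t)}=2$, i.e. $w_{t+1}(n)$ against $w_{(t-1)+2\ol{b(t)}}(n)=w_{t+1}(n)$ termwise, and the residue is exactly $\sum_i\bigl(w_{(t-1)+2i\ell}-w_{(t-1)+2\ol{b(t-1)}+2i\ell}\bigr)=l_{t-1}(n)$, since $b(t-1)=\ell-3$ and $\ol{b(t-1)}=2$ so $(t-1)+2\ol{b(t-1)} = t+1+2 = $ the right shift. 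This gives \eqref{eq:redl-2}.

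The main obstacle will be bookkeeping the index shifts cleanly, in particular making sure the cancellations in cases (2) and (3) are perfectly termwise over all $i\ge 0$ and that no term is double-counted or orphaned at the bottom of the range (the $i=0$ terms). I would double-check the edge alignments by treating the two extreme sub-cases $b(t)=1$ and $b(t)=\ell-3$ of case (1) explicitly, since there $t-1$ or $t+1$ sits at the very boundary $b=0$ or $b=\ell-2$ of $\cR$, to confirm the general matching argument does not secretly require $2\le b(t)\le\ell-4$. An alternative, perhaps cleaner, route would be to pass to generating functions: since all the identities are linear with constant (in $n$) structure, one could express each $l_t$ via $W_t(x)=c(x)^{t+1}$ from Proposition~\ref{prop:wx} and verify the three relations as identities in $\C[[x]]$, but the diagrammatic/termwise argument above is more transparent and avoids introducing the series manipulations prematurely.
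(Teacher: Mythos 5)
Your proposal is correct and is essentially the paper's own proof: both apply the branching recursion $w_t(n+1)=w_{t-1}(n)+w_{t+1}(n)$ term by term to the alternating-sum expansion $l_t=\sum_i\bigl(w_{t+2i\ell}-w_{t+2\ol{b(t)}+2i\ell}\bigr)$ from Corollary~\ref{cor:lt}, then regroup the four resulting families according to whether $b(t)\in\{1,\dots,\ell-3\}$, $b(t)=0$, or $b(t)=\ell-2$; in the latter two cases one family telescopes (case 2) or two families cancel outright (case 3), exactly as you describe. The only slight imprecision is your phrasing in case (2) that the $w_{t-1}$ family ``has no subtracted partner'': it does have one, namely $-w_{t+1+2\ol{b(t)}+2i\ell}=-w_{(t-1)+2(i+1)\ell}$, and it is precisely this telescoping pairing that leaves the single term $w_{t-1}(n)$, while the other two families combine to $l_{t+1}(n)$.
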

\begin{proof}
The relation \eqref{eq:recw} may be written as follows. For all $t,n\in\N$, we have
\be\label{eq:recw2}
w_t(n+1)=w_{t-1}(n)+w_{t+1}(n).
\ee

Now observe that
if $t\equiv b(t)\text{(mod }\ell)$ and $b(t)\in\N'$, then applying \eqref{eq:recw2} twice, we obtain
\be\label{eq:all}
w_t(n+1)-w_{t+2\ol{b(t)}}(n+1)=
w_{t-1}(n)+w_{t+1}(n)-w_{t+2\ol{ b(t)}-1}(n)-w_{t+2\ol{b(t)}+1}(n).
\ee
We shall combine the terms of the right side of \eqref{eq:all} in different ways, depending on the value of $b(t)$.
First take $t$ such that $0<t<\ell-2$. Note that $\ol{b(t)\pm 1}=\ol b(t)\mp 1$, and for $t$ such that $0<b(t)<\ell-2$
we have $b(t\pm 1)=b(t)\pm 1$. Hence
\be\label{eq:gen}
\begin{aligned}
w_t(n+1)-w_{t+2\ol {b(t)}}(n+1)=&(w_{t-1}(n)-w_{t+2\ol{ b(t)}+1}(n))+(w_{t+1}(n)-w_{t+2\ol{ b(t)}-1}(n))\\
=&(w_{t-1}(n)-w_{t-1+2\ol {b(t)}+2}(n))+(w_{t+1}(n)-w_{t+1+2\ol t-2}(n))\\
=&(w_{t-1}(n)-w_{t-1+2\ol{b( t-1)}}(n))+(w_{t+1}(n)-w_{t+1+2\ol{ b(t+1)}}(n)).\\
\end{aligned}
\ee

The same relation holds when $t$ in \eqref{eq:gen} is replaced by $t+2i\ell$ ($i\geq 0$). That is, for $i\geq 0$ we have 
\be\label{eq:gen2}
\begin{aligned}
w_{t+2i\ell}(n+1)-w_{t+2i\ell+2\ol {b(t)}}(n+1)=&\\
(w_{t+2i\ell-1}(n)-w_{t+2i\ell-1+2\ol{b( t-1)}}(n))&+(w_{t+2i\ell+1}(n)-w_{t+1+2i\ell+2\ol{ b(t+1)}}(n)).\\
\end{aligned}
\ee
Now given the second line of  \eqref{eq:dimL},  summing both sides of \eqref{eq:gen2} over $i\geq 0$ yields the relation \eqref{eq:redgen}.

Next take $t\equiv 0\text{(mod }\ell)$, i.e. $b(t)=0$. Then \eqref{eq:all} may be written as follows. For $t\equiv 0\text{(mod }\ell)$,
note that $\ol{b(t)}=\ell-1$, and we have
\be\label{eq:0}
\begin{aligned}
w_t(n+1)-w_{t+2(\ell-1)}(n+1)=&(w_{t-1}(n)-w_{t-1+2\ell}(n))+(w_{t+1}(n)-w_{t+1+2(\ell-2)}(n))\\
=&(w_{t-1}(n)-w_{t-1+2\ell}(n))+(w_{t+1}(n)-w_{t+1+2(\ol{b(t+1)})}(n)).\\
\end{aligned}
\ee
The same relation \eqref{eq:0} holds when $t$ is replaced by $t+2i\ell$ ($i\geq 0$).

Summing both sides of \eqref{eq:0} over $i\geq 0$, we see that the first summand on the right is $w_{t-1}(n)$ since 
all other summands cancel, while the second summand is $l_{t+1}$ by \eqref{eq:dimL}. This proves the relation \eqref{eq:red0}.

Finally, take $t\equiv \ell-2\text{(mod }\ell)$, i.e. $b(t)=\ell-2$, so that $\ol{b(t)}=1$ and $g(t)=t+2$. In this case \eqref{eq:all} reads as follows.
\be\label{eq:l-2}
\begin{aligned}
w_t(n+1)-w_{t+2}(n+1)=&w_{t-1}(n)+w_{t+1}(n)-(w_{t+1}(n)+w_{t+3}(n))\\
=&w_{t-1}(n)-w_{t+3}(n))\\
=&w_{t-1}(n)-w_{t-1+2\ol{b(t-1)}}(n)).\\
\end{aligned}
\ee

The relation \eqref{eq:l-2} remains true when $t$ is replaced by $t+2i\ell$ for any $i\geq 0$,
so that 
\be\label{eq:l-2m}
w_{t+2i\ell}(n+1)-w_{g(t+2i\ell)}(n+1)=w_{t+2i\ell-1}(n)-w_{t+2i\ell-1+2\ol{b(t+2i\ell-1)}}(n)),
\ee
and summing both sides of \eqref{eq:l-2m} over $i\equiv \ell-2\text{(mod }\ell)$ yields the relation \eqref{eq:redl-2}
and completes the proof of the proposition.
\end{proof}

\subsection{Generating functions}\label{ss:gen} We continue to assume that $q^2$ has finite order $\ell$.
In this subsection, we give explicit generating functions for the dimensions
$l_t(n)$ of the simple modules $L_t(n)$ of the algebras $\TL_n(q)$. Specifically, we give explicit formulae 
for the power series $L_t^{(\ell)}(x)$ defined in \eqref{eq:ltx}.

Recall \eqref{eq:cx} that $c(x)=1+\sum_{n=1}^\infty \frac{1}{n+1}\binom{2n}{n}x^n$, and define 
\be\label{eq:dx}
d(x)=c(x)-1=xc(x)^2.
\ee

Notice that the relation \eqref{eq:reccx} may be written
\be\label{eq:recdx}
x(d(x)+1)^2=d(x).
\ee

Recall also that for $t\in\N$, $b(t)$ is defined by $t=a\ell+b(t)$, where $0\leq b(t)\leq\ell-1$, and that 
$\cR=\{0,1,\dots,\ell-2\}$.

We shall prove
\begin{theorem}\label{prop:ltx} Maintain the above notation and let $t\in\N$. If $b(t)=\ell-1$, then 
$L^{(\ell)}_t(x)=W_t(x)=c(x)^{t+1}$.

If $b(t)\in\cR$ then
\be\label{eq:tlx}
L^{(\ell)}_t(x)=\frac{(d(x)+1)^{t+1}(1-d(x)^{\ell-1-b(t)})}{1-d(x)^\ell}.
\ee
\end{theorem}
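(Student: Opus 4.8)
The plan is to translate the dimension recurrences of Proposition~\ref{prop:red} into functional equations for the generating functions $L_t^{(\ell)}(x)$, and then solve the resulting finite linear system. First I would dispose of the case $b(t)=\ell-1$: here $t\in\N\setminus\N'$, so by Theorem~\ref{thm:tlcomp}(2) the cell module $W_t(n)$ is simple for all $n$, hence $L_t(n)=W_t(n)$ and $L_t^{(\ell)}(x)=W_t(x)=c(x)^{t+1}$ by Proposition~\ref{prop:wx}. For the main case $b(t)\in\cR$, the strategy is to fix a residue class and let $t$ range over $\{t_0, t_0+2, t_0+4,\dots\}$ while bookkeeping the generating functions in the single variable $x$ tracking $k$ where $n=t+2k$.

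The key step is to convert each of the three recurrences \eqref{eq:redgen}, \eqref{eq:red0}, \eqref{eq:redl-2} into a relation among the $L_t^{(\ell)}(x)$. The recurrence $l_t(n+1)=l_{t-1}(n)+l_{t+1}(n)$ relates dimensions at level $n+1$ and level $n$; writing $n=t+2k$ and matching powers of $x$, the passage $n\rightsquigarrow n+1$ with $t\rightsquigarrow t-1$ keeps $k$ fixed, whereas $t\rightsquigarrow t+1$ shifts $k$ by one. Concretely I expect to get, for $b(t)\in\cR\setminus\{0\}$, a relation of the shape $L_t^{(\ell)}(x)=L_{t-1}^{(\ell)}(x) + x\,L_{t+1}^{(\ell)}(x)$ (the exact shift in $x$ to be pinned down by careful index comparison, exactly as in the passage from \eqref{eq:recw} to the functional equation $W_t(x)=c(x)W_{t-1}(x)$). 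Similarly \eqref{eq:red0} gives $L_t^{(\ell)}(x)=W_{t-1}(x)+x\,L_{t+1}^{(\ell)}(x)$ when $b(t)=0$, and \eqref{eq:redl-2} gives $L_t^{(\ell)}(x)=L_{t-1}^{(\ell)}(x)$ when $b(t)=\ell-2$. Together with the boundary identity $W_s(x)=c(x)^{s+1}=(d(x)+1)^{s+1}$ and the fact that all these generating functions are honest power series (so the linear system has a unique solution in $\C[[x]]$), this is a closed system: starting from $t$ with $b(t)=\ell-2$ one walks down through $b(t)=\ell-3,\dots,1$ by \eqref{eq:redgen}, hits $b(t)=0$ where \eqref{eq:red0} injects a $W$-term, and the $b=\ell-2$ relation closes the loop one period up.

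Having assembled the system, the remaining step is to solve it and recognise the answer \eqref{eq:tlx}. I would substitute the ansatz $L_t^{(\ell)}(x)=\dfrac{(d+1)^{t+1}(1-d^{\,\ell-1-b(t)})}{1-d^\ell}$ (writing $d=d(x)$, and using $x(d+1)^2=d$ from \eqref{eq:recdx} to simplify) and verify it satisfies each of the three functional equations, together with the observation that $W_s(x)=(d+1)^{s+1}$ is the $b(s)=\ell-1$, i.e. $\ell-1-b(s)=0$, specialisation — except that there the geometric-series structure degenerates, matching the separate statement for $b(t)=\ell-1$. Verifying the $b(t)=0$ relation is the natural consistency check that forces the numerator factor $1-d^{\ell-1-b(t)}$: when $b(t)=0$ the formula reads $(d+1)^{t+1}\frac{1-d^{\ell-1}}{1-d^\ell}$, and one checks $(d+1)^{t+1}\frac{1-d^{\ell-1}}{1-d^\ell} = (d+1)^{t}\cdot\frac{?}{} + x(d+1)^{t+2}\frac{1-d^{\ell-2}}{1-d^\ell}$ reduces, via $x(d+1)^2=d$, to an identity. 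Alternatively — and this is probably the cleaner route — I would prove \eqref{eq:tlx} directly from Corollary~\ref{cor:lt}, i.e. from $l_t=\sum_{i\ge0}(w_{t+2i\ell}-w_{t+2\ol{b(t)}+2i\ell})$, by passing to generating functions: since $W_s(x)=c(x)^{s+1}$ and replacing $t$ by $t+2i\ell$ multiplies the generating function by $x^{i\ell}c(x)^{2i\ell}=(xc(x)^2)^{i\ell}=d(x)^{i\ell}$ (this is exactly where $d=xc^2$ enters), the two sums in Corollary~\ref{cor:lt} become geometric series in $d^\ell$, giving $L_t^{(\ell)}(x)=\big(c^{t+1}-d^{\ell-1-b(t)}c^{t+1}\big)\sum_{i\ge0}d^{i\ell}=\dfrac{c(x)^{t+1}(1-d(x)^{\ell-1-b(t)})}{1-d(x)^\ell}$, which is \eqref{eq:tlx} since $c(x)=d(x)+1$.

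The main obstacle I anticipate is the bookkeeping in the generating-function translation: one must be careful that the shift $n\mapsto n+1$ in Proposition~\ref{prop:red} is compatible with the parity constraint $n\equiv t-1\ (\mathrm{mod}\ 2)$ and correctly track how $k$ (hence the power of $x$) changes under $t\mapsto t\pm1$, so that the $x$'s land in the right places — this is precisely the subtlety that makes $d=xc^2$ rather than $c$ the natural variable. A secondary point to handle carefully is the truncation: Corollary~\ref{cor:lt} is a finite sum for each fixed $n$, but becomes an infinite (convergent in $\C[[x]]$) geometric series at the level of generating functions, and one should note that $d(x)=x+\cdots$ has zero constant term so that $1-d(x)^\ell$ is invertible in $\C[[x]]$ and the manipulations are legitimate.
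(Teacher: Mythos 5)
Your preferred route --- deriving \eqref{eq:tlx} directly from Corollary~\ref{cor:lt} by multiplying $l_t(t+2k)=\sum_{i\ge 0}\bigl(w_{t+2i\ell}(t+2k)-w_{t+2\overline{b(t)}+2i\ell}(t+2k)\bigr)$ by $x^k$, summing, and recognising the geometric series in $d(x)^\ell=\bigl(xc(x)^2\bigr)^\ell$ --- is exactly the paper's proof; the computation you sketch, including the identification $c(x)^{t+1}d(x)^{\ell-1-b(t)}$ for the second sum, is the same manipulation the paper carries out term by term. Your first route, via the three recurrences of Proposition~\ref{prop:red} converted to functional equations $L_t^{(\ell)}(x)=L_{t-1}^{(\ell)}(x)+xL_{t+1}^{(\ell)}(x)$ and the like, is also valid but is in fact the strategy the paper reserves for the subsequent Theorem~\ref{thm:tcheb} (the $p_i$-polynomial formula), so the two routes you offer are essentially the paper's proofs of Theorem~\ref{prop:ltx} and Theorem~\ref{thm:tcheb} respectively.
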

\begin{proof} If $b(t)=\ell-1$ then by Theorem \ref{thm:tlcomp}(2), $W_t(n)$ is simple for all $n$. Hence 
in this case $L^{(\ell)}_t(x)=\sum_{k=0}^\infty w_t(t+2k)x^k=W_t(x)=c(x)^{t+1}$ by Proposition \ref{prop:wx}.

 Now assume that $b(t)\in\cR$.
 
   It follows from \eqref{eq:dimL} that for $k\geq 0$,
\be\label{eq:split}
\begin{aligned}
l_t(t+2k)=&
\sum_{i=0}^\infty w_{g^{2i}(t)}(t+2k)-\sum_{i=0}^\infty w_{g^{2i+1}(t)}(t+2k)\\
=&
\sum_{i=0}^\infty w_{t+2i\ell}(t+2k)-\sum_{i=0}^\infty w_{t+2i\ell+2\ol{b(t)}}(t+2k).\\
\end{aligned}
\ee
where $g$ is the function defined in Theorem \ref{thm:tlcomp}.

Now multiply each side of \eqref{eq:split} by $x^k$ and sum over $k$. We evaluate the two summands separately.
We first have
\[
\begin{aligned}
\sum_{k=0}^\infty\sum_{i=0}^\infty w_{t+2i\ell}(t+2k)x^k
=&\sum_{k=0}^\infty\sum_{i=0}^\infty w_{t+2i\ell}(t+2i\ell +2(k-i\ell))x^k\\
=&\sum_{k=0}^\infty \sum_{i=0}^\infty w_{t+2i\ell}(t+2i\ell +2(k-i\ell))x^{k-i\ell}x^{i\ell}\\
=&\sum_{i=0}^\infty x^{i\ell}\sum_{k=0}^\infty w_{t+2i\ell}(t+2i\ell +2(k-i\ell))x^{k-i\ell}\\
=&\sum_{i=0}^\infty x^{i\ell}W_{t+2i\ell}(x)\text{ since $w_t(n)=0$ for $n<t$}\\
=&\sum_{i=0}^\infty x^{i\ell}c(x)^{t+2i\ell+1}\text{ by Proposition \ref{prop:wx}}\\
=&c(x)^{t+1}\sum_{i=0}^\infty (xc(x)^2)^{i\ell}\\
=&\frac{c(x)^{t+1}}{1-(xc(x)^2)^\ell}\\
=&\frac{(d(x)+1)^{t+1}}{1-d(x)^\ell}\\
\end{aligned}
\]
A similar calculation yields that 
\[
\sum_{k=0}^\infty\sum_{i=0}^\infty w_{t+2i\ell+2\ol{b(t)}}(t+2k)x^k=\frac{(d(x)+1)^{t+1}d(x)^{\ell-1-b(t)}}{1-d(x)^\ell},
\]
and using \eqref{eq:split}, the proof is complete.
\end{proof}

\subsection{An alternative formula for $L_t^{(\ell)}(x)$} We give in this section a formula for $L_t^{(\ell)}(x)$
in terms of the polynomials $p_i(x)$ defined below.

\begin{definition}\label{def:pix}
Define a sequence of polynomials $p_i(x)\in\Z[x]$, $i=1,2,3,\dots$ by
\be\label{eq:pix}
\begin{aligned}
p_1(x)&=p_2(x)=1\text{ and }\\
p_{i+1}(x)&=p_i(x)-xp_{i-1}(x)\text{ for }i\geq 2.\\
\end{aligned}
\ee
\end{definition}

Thus $p_3(x)=1-x$, $p_4(x)=1-2x$, $p_5(x)=1-3x+x^2$ and $p_6(x)=1-4x+3x^2$, etc.

\begin{lemma}\label{lem:pi}
Let $y$ be an indeterminate over $\Z$ and $j$ a positive integer.
\begin{enumerate}
\item For each $j\geq 1$ there are unique integers $c_i^j$ such that 
\be\label{eq:sj}
1+y+y^2+\dots+y^{j-1}=\sum_{i=0}^{[\frac{j-1}{2}]}c_i^{j}y^i(y+1)^{j-1-2i}.
\ee
\item The integers $c_i^j$ satisfy the recurrence $c_i^{j+1}=c_i^j-c_{i-1}^{j-1}$.
\item We have $\sum_{i=0}^{[\frac{j-1}{2}]}c_i^{j}x^i=p_j(x)$.
\end{enumerate}
\end{lemma}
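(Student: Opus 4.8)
The plan is to prove the three parts of \lemref{lem:pi} in order, since each builds on the previous one.

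For part (1), the idea is that $\{y^i(y+1)^{j-1-2i} : 0 \le i \le [\tfrac{j-1}{2}]\}$ is, after multiplying by appropriate powers, essentially a basis adapted to the filtration of $\Z[y]_{\le j-1}$ (polynomials of degree $\le j-1$) by powers of $y$: the element $y^i(y+1)^{j-1-2i}$ has $y$-adic valuation exactly $i$ and degree exactly $j-1-i$. First I would check that these $[\tfrac{j-1}{2}]+1$ elements are $\Z$-linearly independent and that any polynomial of degree $\le j-1$ that is ``palindromic enough'' (or more simply, any polynomial of degree $\le j-1$ whose expansion in this set terminates) can be written this way; in fact the cleanest route is to argue by strong induction on $j$, peeling off the leading term. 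Since $1+y+\dots+y^{j-1} = (y+1)^{j-1} + (\text{lower degree, divisible by } y)$ is not literally true, I would instead use the telescoping identity $1 + y + \dots + y^{j-1} = (1+y)(1+y^2+y^4+\dots)$-type manipulation, or more robustly: write $s_j(y) := 1+y+\dots+y^{j-1}$ and verify directly that $s_{j+1}(y) = (y+1)s_{j-1}(y)\cdot(\text{correction})$ — actually the clean recurrence is $s_{j+1}(y) = (1+y)s_j(y) - y\,s_{j-1}(y)$, which one checks by comparing coefficients. Granting this recurrence for $s_j$, uniqueness of the $c_i^j$ in part (1) follows from the linear independence of the spanning set, and existence follows by induction once we have part (2).

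For part (2), substitute the expansion \eqref{eq:sj} for $s_j$, $s_{j-1}$, $s_{j+1}$ into the recurrence $s_{j+1}(y) = (1+y)s_j(y) - y\,s_{j-1}(y)$ and compare coefficients of $y^i(y+1)^{j-2i}$. On the right, $(1+y)s_j(y) = \sum_i c_i^j y^i (y+1)^{j-2i}$ directly, and $y\,s_{j-1}(y) = \sum_i c_i^{j-1} y^{i+1}(y+1)^{j-2-2i} = \sum_i c_{i-1}^{j-1} y^i (y+1)^{j-2i}$ after reindexing; subtracting gives $c_i^{j+1} = c_i^j - c_{i-1}^{j-1}$, which is exactly the claimed recurrence. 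The only subtlety is checking the ranges of summation match up at the endpoints (the top term when $j$ is odd versus even), and that the expressions $y^i(y+1)^{j-2i}$ really are linearly independent so that coefficient comparison is legitimate — this is where a short lemma on independence, observed once, does all the work.

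For part (3), set $P_j(x) := \sum_{i=0}^{[(j-1)/2]} c_i^j x^i$ and check it satisfies the same recurrence and initial conditions as $p_j(x)$. From \eqref{eq:sj} evaluated at small $j$ we get $P_1(x) = P_2(x) = 1$ (matching \eqref{eq:pix}). Applying the recurrence from part (2): $P_{j+1}(x) = \sum_i c_i^{j+1} x^i = \sum_i (c_i^j - c_{i-1}^{j-1})x^i = P_j(x) - x P_{j-1}(x)$, which is precisely the defining recurrence \eqref{eq:pix} for $p_{j+1}$. By induction $P_j = p_j$ for all $j$.

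The main obstacle I anticipate is not any single deep step but rather the bookkeeping at the extreme indices: when $j$ changes parity, the upper limit $[\tfrac{j-1}{2}]$ jumps or stays, and one must confirm that the ``missing'' or ``extra'' coefficient is consistently $0$ (equivalently, $c_{-1}^{j-1} = 0$ and $c_i^j = 0$ for $i > [\tfrac{j-1}{2}]$) so that the reindexing in part (2) and the recurrence in part (3) go through without off-by-one errors. Establishing the linear independence of $\{y^i(y+1)^{j-1-2i}\}$ cleanly at the outset — by the valuation/degree argument above — is what makes everything else routine.
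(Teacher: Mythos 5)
Your proposal is correct and rests on the same key identity as the paper's proof: the three-term relation $\sigma_{j+1}=(1+y)\sigma_j-y\,\sigma_{j-1}$ (written $(1+y)\sigma_j=\sigma_{j+1}+y\sigma_{j-1}$ in the paper), from which both part (2) and part (3) fall out exactly as you describe. The one place you diverge is in justifying part (1). The paper simply observes that $\{y^i(y+1)^{j-1-2i}\}_{0\le i\le[(j-1)/2]}$ is a basis for the space of palindromic polynomials of degree $j-1$ (i.e.\ those $p$ with $y^{j-1}p(1/y)=p(y)$), which has dimension $[\tfrac{j-1}{2}]+1$; since $\sigma_j$ is palindromic, both existence and uniqueness follow at once from a dimension count. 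You instead prove linear independence by the $y$-adic valuation argument (lowest term of $y^i(y+1)^{j-1-2i}$ is $y^i$, so the coefficients can be stripped off one at a time) and then get existence by running the induction via the recurrence. Both routes are valid; the paper's is a one-line observation once you notice the basis elements are palindromic, while yours is a bit more elementary (no appeal to palindromicity) at the cost of needing part (2) first to close the induction. Your concern about endpoint bookkeeping is legitimate but resolves cleanly: the upper index of the expansion for $s_{j+1}$ is $[\tfrac{j}{2}]$, and the two sums $\sum_i c_i^j y^i(y+1)^{j-2i}$ (upper index $[\tfrac{j-1}{2}]$) and $\sum_i c_{i-1}^{j-1}y^i(y+1)^{j-2i}$ (upper index $[\tfrac{j-2}{2}]+1=[\tfrac{j}{2}]$) do combine to that range once one sets $c_{-1}^{j-1}=0$ and $c_i^j=0$ for $i>[\tfrac{j-1}{2}]$, exactly as you anticipate.
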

\begin{proof}
First observe that the polynomials $y^i(y+1)^{j-1-2i}$, $0\leq i\leq  [\frac{j-1}{2}]$ form a basis
of the space of palindromic polynomials of degree $j-1$ in $y$. The statement (1) follows.

Next, if we write $\sigma_j=1+y+y^2+\dots+y^{j-1}$, note that $(1+y)\sigma_j=\sigma_{j+1}+y\sigma_{j-1}$.
Applying \eqref{eq:sj}, we obtain 
\[
\sum_{i=0}^{[\frac{j-1}{2}]}c_i^{j}y^i(y+1)^{j-2i}=\sum_{i=0}^{[\frac{j}{2}]}c_i^{j+1}y^i(y+1)^{j-2i}
+\sum_{i=0}^{[\frac{j-2}{2}]}c_i^{j-1}y^{i+1}(y+1)^{j-2-2i}.
\]
Comparing the coefficients of $y^i(y+1)^{j-2i}$ yields the relation (2).

Write $C^j(x)=\sum_{i=0}^{[\frac{j-1}{2}]}c_i^{j}x^i$. The recurrence (2) shows that $C^{j+1}(x)=C^j(x)-xC^{j-1}(x)$.
Further, it is easily checked that $C^1(x)=C^2(x)=1$, which, by comparison with
\eqref{eq:pix}, completes the proof that $C^j(x)=p_j(x)$.
\end{proof}

\begin{lemma}\label{lem:l-2} Suppose $t=a\ell+b$ with $b=\ell-3$ or $b=\ell-2$. Then 
\[
L_t^{(\ell)}(x)=\frac{c(x)^{a\ell}}{p_{\ell}(x)},
\]
where $p_{\ell}(x)$ is the polynomial defined in \eqref{eq:pix}.
\end{lemma}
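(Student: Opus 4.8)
The idea is to specialise the general formula of Theorem~\ref{prop:ltx} to the two residues $b(t)=\ell-3$ and $b(t)=\ell-2$ and then recognise the resulting rational function of $d(x)$ as the claimed expression involving $p_\ell(x)$. Since in these two cases $\ol{b(t)}=\ell-1-b(t)$ equals $2$ and $1$ respectively, Theorem~\ref{prop:ltx} gives
\[
L_t^{(\ell)}(x)=\frac{(d(x)+1)^{t+1}\bigl(1-d(x)^{\ell-1-b(t)}\bigr)}{1-d(x)^\ell},
\]
so that $1-d(x)^{\ell-1-b(t)}$ is $1-d(x)^2=(1-d(x))(1+d(x))$ when $b=\ell-3$, and $1-d(x)$ when $b=\ell-2$. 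In both cases the numerator is divisible (in $\Z[[x]]$, or after clearing, in $\Z[d]$) by $1-d(x)$, and the denominator $1-d(x)^\ell$ factors as $(1-d(x))(1+d(x)+\cdots+d(x)^{\ell-1})$; cancelling the common factor $1-d(x)$ reduces the problem to identifying
\[
\frac{(d(x)+1)^{t+1}\cdot(\text{$1+d(x)$ or $1$})}{1+d(x)+\cdots+d(x)^{\ell-1}}
\]
with $c(x)^{a\ell}/p_\ell(x)$, where $t=a\ell+b$.

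\textbf{Key steps in order.} First I would write $c(x)=d(x)+1$ and $x=d(x)/(d(x)+1)^2$ using \eqref{eq:dx} and \eqref{eq:recdx}; this lets me convert any polynomial in $x$ into a rational function of $d:=d(x)$ with denominator a power of $(d+1)$. Second, I would apply Lemma~\ref{lem:pi}(1) with $y=d$ and $j=\ell$, which gives
\[
1+d+\cdots+d^{\ell-1}=\sum_{i=0}^{[(\ell-1)/2]}c_i^\ell\, d^i(d+1)^{\ell-1-2i},
\]
and then by Lemma~\ref{lem:pi}(3) the right side equals $(d+1)^{\ell-1}\sum_i c_i^\ell\bigl(d/(d+1)^2\bigr)^i=(d+1)^{\ell-1}p_\ell(x)$. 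Hence the denominator in my reduced expression is exactly $(d(x)+1)^{\ell-1}p_\ell(x)$. Third, I compute the numerator in each of the two cases: for $b=\ell-2$ it is $(d+1)^{t+1}=(d+1)^{a\ell+\ell-1}$, so the ratio is $(d+1)^{a\ell}/p_\ell(x)=c(x)^{a\ell}/p_\ell(x)$; for $b=\ell-3$ it is $(d+1)^{t+1}(d+1)=(d+1)^{a\ell+\ell-1}$ again (since $t+1=a\ell+\ell-2$), giving the same answer. This matches the claim, and since both sides are elements of $\Z[[x]]$ the identity of rational functions in $d(x)$ suffices.

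\textbf{Main obstacle.} The only delicate point is the manipulation of $1-d(x)^\ell$ and the cancellation of the factor $1-d(x)$: one must be careful that $d(x)$ is a power series with $d(0)=0$, so $1-d(x)$ is a unit in $\Z[[x]]$ and the division is legitimate, and that the passage between "rational function in $d$" and "element of $\Z[[x]]$" is consistent. The identification of $\sum_{i=0}^{\ell-1}d^i$ with $(d+1)^{\ell-1}p_\ell(x)$ via Lemma~\ref{lem:pi} is the conceptual heart, but it is already packaged in that lemma, so the remaining work is the bookkeeping of exponents $t+1=a\ell+\ell-2$ in the two residue classes and checking both land on $c(x)^{a\ell}/p_\ell(x)$. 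I would also double-check the base cases (small $\ell$, small $a$) to be sure the degenerate cases $a=0$ or $\ell=3,4$ are covered or explicitly excluded.
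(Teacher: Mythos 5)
Your proof is correct and follows essentially the same path as the paper: specialise Theorem~\ref{prop:ltx}, cancel the common factor $1-d(x)$ from $1-d(x)^\ell$, and use Lemma~\ref{lem:pi} together with $d=x(d+1)^2$ to identify $1+d+\cdots+d^{\ell-1}$ with $(d+1)^{\ell-1}p_\ell(x)$. The one small difference is that the paper first invokes the recurrence \eqref{eq:redl-2} to reduce the case $b=\ell-3$ to $b=\ell-2$, whereas you treat both residues directly via \eqref{eq:tlx}; both are fine, and your exponent bookkeeping is correct.
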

\begin{proof}
First observe that by \eqref{eq:redl-2}, $l_t(t+2k)=l_{t-1}(t-1+2k)$ if $b(t)=\ell-2$, so that $L_t^{(\ell)}(x)$ will be
the same for the two nominated values of $t$. Now take $t=a\ell+\ell-2$. Applying the formula \eqref{eq:tlx}, one sees easily that 
\[
L_t^{(\ell)}(x)=\frac{(d(x)+1)^{a\ell+\ell-1}}{1+d(x)+d(x)^2+\dots+d(x)^{\ell-1}}.
\]
Now using the relation $x(d(x)+1)^2=d(x)$ repeatedly, together with Lemma \ref{lem:pi}, one sees that 
$p_{\ell-1}(x)L_t^{(\ell)}(x)=(d(x)+1)^{a\ell}$.
\end{proof}

The next result is a generalisation of  \cite[Thm. 2.3]{JR}, which deals essentially with the case $0\leq t\leq \ell-2$
of the Theorem. 

\begin{theorem}\label{thm:tcheb}
With the above notation, we have, for $t=a\ell+b$ with $0\leq b\leq\ell-2$,
\be\label{eq:tcheb}
L_t^{(\ell)}(x)=\frac{p_{\ell-1-b}(x)}{p_{\ell}(x)}c(x)^{a\ell},
\ee
where $c(x)$ is the Catalan series \eqref{eq:cx} and the $p_i(x)$ are defined in \eqref{eq:pix}.
\end{theorem}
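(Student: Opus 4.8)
The plan is to start from the closed formula \eqref{eq:tlx} of Theorem~\ref{prop:ltx}, namely
\[
L^{(\ell)}_t(x)=\frac{(d(x)+1)^{t+1}\bigl(1-d(x)^{\ell-1-b}\bigr)}{1-d(x)^{\ell}}
\]
for $t=a\ell+b$ with $b\in\cR$, and to show it equals $\dfrac{p_{\ell-1-b}(x)}{p_{\ell}(x)}c(x)^{a\ell}$. Since $c(x)=d(x)+1$ and $x(d(x)+1)^2=d(x)$ by \eqref{eq:recdx}, the factor $(d(x)+1)^{t+1}=(d(x)+1)^{a\ell+b+1}$ will split as $(d(x)+1)^{a\ell}\cdot(d(x)+1)^{b+1}$, and the first piece is exactly $c(x)^{a\ell}$. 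So the whole theorem reduces to the identity in the quotient field of $\Z[[x]]$:
\[
\frac{(d(x)+1)^{b+1}\bigl(1-d(x)^{\ell-1-b}\bigr)}{1-d(x)^{\ell}}=\frac{p_{\ell-1-b}(x)}{p_{\ell}(x)},
\]
which is purely about $d=d(x)$ and the relation $x(d+1)^2=d$.

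The key tool is Lemma~\ref{lem:pi}(3): $p_j(x)=\sum_i c_i^j x^i$ where the $c_i^j$ are the coefficients expressing $1+y+\cdots+y^{j-1}=\sum_i c_i^j\,y^i(y+1)^{j-1-2i}$. I would substitute $y=d$ and use $x=d/(d+1)^2$, so that $x^i(d+1)^{j-1}=d^i(d+1)^{j-1-2i}$; summing gives
\[
(d+1)^{j-1}\,p_j\!\left(\frac{d}{(d+1)^2}\right)=\sum_i c_i^j\,d^i(d+1)^{j-1-2i}=1+d+d^2+\cdots+d^{j-1}=\frac{1-d^{j}}{1-d}.
\]
Applying this with $j=\ell$ and with $j=\ell-1-b$, I get
\[
p_{\ell}(x)=\frac{1-d^{\ell}}{(1-d)(d+1)^{\ell-1}},\qquad p_{\ell-1-b}(x)=\frac{1-d^{\ell-1-b}}{(1-d)(d+1)^{\ell-2-b}}.
\]
Dividing these two expressions gives $\dfrac{p_{\ell-1-b}(x)}{p_{\ell}(x)}=\dfrac{(1-d^{\ell-1-b})(d+1)^{b+1}}{1-d^{\ell}}$, which is precisely the factor appearing in \eqref{eq:tlx} after extracting $c(x)^{a\ell}$. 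This completes the proof.

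I should check the edge cases: when $b=\ell-2$ we have $\ell-1-b=1$ and $p_1(x)=1$, consistent with Lemma~\ref{lem:l-2} (which also handles $b=\ell-3$ and gives the same answer via \eqref{eq:redl-2}); when $b=\ell-1$ the claimed formula would read $p_0(x)/p_{\ell}(x)\cdot c(x)^{(a+1)\ell}$, but this case is excluded from the statement and is instead the case $L_t^{(\ell)}(x)=c(x)^{t+1}$ already treated in Theorem~\ref{prop:ltx}. The main obstacle, such as it is, is purely bookkeeping: justifying that the substitution $y=d(x)$ into the polynomial identity \eqref{eq:sj} is legitimate in $\Z[[x]]$ (it is, since $d(x)$ has zero constant term, so all the power series involved converge $x$-adically) and keeping careful track of the exponents of $(d+1)$ so that the $c(x)^{a\ell}$ factor comes out exactly. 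There is no genuine difficulty beyond this algebraic manipulation, since all the combinatorial content has been packaged into Lemma~\ref{lem:pi} and the relation \eqref{eq:recdx}.
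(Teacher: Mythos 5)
Your argument is correct, and it is a genuinely different (and more direct) route than the paper's. The paper proves Lemma~\ref{lem:l-2}, which is exactly your key substitution identity in the single special case $b=\ell-2$, and then runs a downward induction on $b$ using the recurrence \eqref{eq:rec} (itself derived from Proposition~\ref{prop:red}) to propagate the formula to all $0\le b\le \ell-2$. You instead observe that the same substitution $y=d(x)$, $x=d/(d+1)^2$ into Lemma~\ref{lem:pi}(1) gives, uniformly in $j$, the identity
\[
(d+1)^{j-1}\,p_j(x)=1+d+\cdots+d^{j-1}=\frac{1-d^{j}}{1-d},
\]
and then the theorem falls out of \eqref{eq:tlx} by pure algebra, with no further recursion needed. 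The two proofs rest on the same ingredients (Theorem~\ref{prop:ltx}, Lemma~\ref{lem:pi}, and $x(d+1)^2=d$), but yours applies the polynomial identity once for all $b$ rather than once for a base case plus an induction, so it is shorter and avoids invoking Proposition~\ref{prop:red}/\eqref{eq:rec} entirely. Two small cosmetic remarks: you do not actually need the quotient field of $\Z[[x]]$, since $p_\ell(x)$ has constant term $1$ and hence is invertible in $\Z[[x]]$, and likewise $1-d$ is a unit there (and the quotient $\frac{1-d^j}{1-d}$ is a polynomial in $d$ anyway); and your edge-case checks at $b=\ell-2,\ell-3$ and $b=\ell-1$ are consistent with Lemma~\ref{lem:l-2} and Theorem~\ref{prop:ltx}, as you say.
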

\begin{proof}
The recurrence \eqref{eq:redgen} may be written as follows: for $t$ such that $b(t)\neq 0, \ell-2$, we have
 $l_t(t+2k)= l_{t-1}(t-1+2k)+l_{t+1}(t+1+2(k-1))$. Multiplying this relation by $x^k$ and summing over $k\geq 0$,
we obtain, after rearrangement,
\be\label{eq:rec}
L_{t-1}^{(\ell)}(x)=L_{t}^{(\ell)}(x) -  x L_{t+1}^{(\ell)}(x).
\ee

Now fix $a\in\N$ and consider the power series $L_{t}^{(\ell)}(x)$ for $t=a\ell+b$, $0\leq b\leq \ell-2$. 
We have seen in Lemma \ref{lem:l-2} that when $b=\ell-2$ or $\ell-3$, then
$L_{a\ell+\ell-2}^{(\ell)}(x)=L_{a\ell+\ell-3}^{(\ell)}(x)=\frac{c(x)^{a\ell}}{p_{\ell}(x)}$.

Now fix $b$ such that $0\leq b\leq \ell-3$ and assume that for all $b'$ with $\ell-2\geq b'\geq b$, 
there are polynomials $r_{\ell-1-b'}(x)$ such that
$L_{a\ell+b'}^{(\ell)}(x)=r_{\ell-1-b'}(x)L_{a\ell+\ell-2}^{(\ell)}(x)$. Then 
$r_1(x)=r_2(x)=1$ and from the recurrence \eqref{eq:rec} we have
\[
\begin{aligned}
L_{a\ell+b-1}^{(\ell)}(x)&=L_{a\ell+b}^{(\ell)}(x)-L_{a\ell+b+1}^{(\ell)}(x)\\
&=r_{\ell-1-b}(x)L_{a\ell+\ell-2}^{(\ell)}(x)-xr_{\ell-2-b}(x)L_{a\ell+\ell-2}^{(\ell)}(x)\\
&=r_{\ell-1-(b-1)}(x)L_{a\ell+\ell-2}^{(\ell)}(x),\\
\end{aligned}
\]
where $r_{\ell-1-b+1}(x)=r_{\ell-1-b}(x)-xr_{\ell-2-b}(x)$.

It follows that for $b=0,1,2,\dots,\ell-2$, $r_{\ell-1-b}(x)=p_{\ell-1-b}(x)$ where $p_i(x)$ is as in \eqref{eq:pix},
and that $L_{a\ell+b}^{(\ell)}(x)=p_{\ell-1-b}(x)L_{a\ell+\ell-2}^{(\ell)}(x)$. The Theorem now follows 
by using the expression for $L_{a\ell+\ell-2}^{(\ell)}(x)$ in Lemma \ref{lem:l-2}.
\end{proof}

\begin{remark}\label{rem:ssrad} It is clear that 
the Jones quotient $Q_n(\ell)$ is not generally the largest semisimple quotient 
of $\TL_n(q)$. For example if $\ell=3$, $\TL_3(q)$ has two simple modules
$L_1(3)$ and $L_3(3)$ of dimension $1$, so its largest
semisimple quotient has dimension $2$, while $Q_3(3)$ has dimension $1$.

Other examples include $\TL_8$ where $\ell=5$. We have $\dim(Q_8)=F_{13}=233$ while
the maximal semisimple quotient of $\TL_8$ has dimension $283$.
\end{remark}

\subsubsection{Some examples}\label{exx:L}
We give several examples of the application of Theorem \ref{thm:tcheb}.
\begin{enumerate}
\item When $\ell=3$, $L_t(x)=\frac{c(x)^{3a}}{1-x}$ for $t=3a+b$, $0\leq b\leq 1$.
\item When $\ell=4$, $L_1(x)=L_2(x)=\frac{1}{1-2x}$, while $L_0(x)=\frac{p_3(x)}{p_4(x)}=\frac{1-x}{1-2x}$.
So $\dim L_0(2n)=2^{n-1}$, $\dim L_1(2n+1)=2^{n}$ and $\dim L_2(2n)=2^{n-1}$.
\item Take $\ell=5$. We shall determine $L_i(x)$ for $1=0,1,2,3$. We have $L_2(x)=L_3(x)=\frac{1}{1-3x+x^2}$,
$L_1(x)=\frac{1-x}{1-3x+x^2}$ and $L_0(x)=\frac{1-2x}{1-3x+x^2}.$

Note that $\frac{L_0(x)-1}{x}=L_1(x)$.

Let us write 
\[
\begin{aligned}
\frac{1-x}{1-3x+x^2}=& \sum_{n=0}^\infty a_nx^n \text{ and}\\
\frac{x}{1-3x+x^2}=& \sum_{n=0}^\infty b_nx^n.\\
\end{aligned}
\]
Then $a_0=1,a_1=2, a_2=5, a_3=13$ and $b_0=0,b_1=1,b_2=3,b_3=8$.

Let $F_1,F_2,F_3,\dots=1,1,2,3,5,8,13,21,34,\dots$ be the Fibonacci sequence.

We shall show that 
\be\label{eq:fib}
a_0,b_1,a_1,b_2,a_2,b_3,a_3,\dots=F_1,F_2,F_3,\dots,
\ee
i.e. that for $i=0,1,2,\dots,$ we have $a_i=F_{2i+1}$ and $b_i=F_{2i}$.

To prove \eqref{eq:fib}, given the initial values of the $a_i$ and $b_i$, it suffices to show that

(i) $b_n+a_n=b_{n+1}$ for $n\geq 0$, and

(ii) $a_n+b_{n+1}=a_{n+1}$ for $n\geq 0$.

For (i), observe that $\sum_{n=0}^\infty (a_n+b_n)x^n=\frac{1-x+x}{1-3x+x^2}=\frac{1}{1-3x+x^2}=\sum_{n=0}^\infty b_{n+1}x^n$.

Similarly, for (ii), we have $\sum_{n=0}^\infty (a_n+b_{n+1})x^n=\frac{2-x}{1-3x+x^2}$, which is readily shown to be equal to
$\sum_{n=0}^\infty a_{n+1}x^n$.

It follows that $L_3(x)=L_2(x)=\sum_{n=0}^\infty b_{n+1}x^n$

We have therefore shown that $\dim(L_2(2+2n))=\dim(L_3(3+2n))=b_{n+1}=F_{2n+2}$, $\dim(L_1(1+2n))=a_n=F_{2n+1}$
and for $n>0$, $\dim(L_0(2n))=a_n-b_n=a_{n-1}=F_{2n-1}$.

\item Take $\ell=6$. We shall compute $L_t(x)$ for $t=0,1,2,3,4$. Note first that $L_3(x)=L_4(x)=\frac{1}{1-4x+3x^2}$, and since 
$1-4x+3x^2=(1-x)(1-3x)$, we have
\[
L_3(x)=L_4(x)=\sum_{n=0}^\infty \frac{3^{n+1}-1}{2}x^n
\]
It follows that $L_2(x)=(1-x)L_3(x)=\sum_{n=0}^\infty 3^nx^n=\frac{1}{1-3x}.$
Similarly, $L_1(x)=\frac{1-2x}{1-4x+3x^2}=\sum_{n=0}^\infty\frac{3^n+1}{2}x^n$
and $L_0(x)=1+xL_1(x)=1+\sum_{n=1}^\infty\frac{3^{n-1}+1}{2}x^n$.
\item Take $\ell=7$. An easy but tedious calculation shows that in this case
\[
\begin{aligned}
L_4(x)=L_5(x)=&1+5x+19x^2+66x^3+221x^4+728x^5+\dots\\
L_3(x)=&1+4x+14x^2+47x^3+155x^4+507x^5+\dots\\
L_2(x)=&1+3x+9x^2+28x^3+89x^4+286x^5+\dots\\
L_1(x)=&1+2x+5x^2+14x^3+42x^4+131x^5+\dots\\
L_0(x)=1+xL_1(x)=&1+x+2x^2+5x^3+14x^4+42x^5+\dots\\
\end{aligned}
\]
\end{enumerate}

\section{The algebras $Q_n(\ell)$.} We assume throughout this section that $\ell$ is fixed and $|q^2|=\ell$.
Recall (Definition \ref{def:q}) that $Q_n(\ell)\simeq \TL_n(q)/R_n(q)$, where $R_n$ is generated by the Jones-Wenzl
idempotent $E_{\ell-1}\in\TL_{\ell-1}(q)$.
We have seen \eqref{eq:qss} that, although they are not the maximal semisimple quotients of the $\TL_n(q)$,
 the algebras $Q_n$ are semisimple and we therefore focus on a description of their simple modules.

\subsection{Classification of the simple $Q_n(\ell)$-modules}
\begin{proposition}\label{prop:sim} 
Let $n\geq\ell-1$. The simple $Q_n$-modules are precisely those simple $\TL_n$-modules $L_t$, $t\in\CT(n)$, such that 
$R_n L_t=0$, where $R_n$ is the ideal of $\TL_n$ generated by $E_{\ell-1}$.
\end{proposition}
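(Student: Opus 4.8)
The plan is to show that the simple $Q_n$-modules are exactly the simple $\TL_n$-modules annihilated by $R_n$, using the elementary representation theory of a quotient algebra. Since $Q_n = \TL_n/R_n$, any $Q_n$-module is, by inflation along the quotient map $\pi:\TL_n\to Q_n$, a $\TL_n$-module on which $R_n$ acts as zero; conversely any $\TL_n$-module killed by $R_n$ factors through $\pi$ and so is a $Q_n$-module. This correspondence is an equivalence of categories between $Q_n\text{-mod}$ and the full subcategory of $\TL_n\text{-mod}$ consisting of modules $M$ with $R_nM=0$, and it clearly preserves and reflects simplicity (a submodule for the $\TL_n$-action is the same as a submodule for the $Q_n$-action, since the two actions have the same orbits of subspaces). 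Hence the simple $Q_n$-modules are precisely the simple $\TL_n$-modules $M$ with $R_nM=0$.

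The second step is to invoke the classification of simple $\TL_n$-modules already available: by \S\ref{ss:cmod} (or Theorem \ref{thm:tlcomp}(3)), the modules $L_t$ for $t\in\CT(n)$ form a complete set of representatives of the isomorphism classes of simple $\TL_n$-modules. Combining this with the first step, the simple $Q_n$-modules are exactly those $L_t$, $t\in\CT(n)$, for which $R_nL_t=0$. This is the assertion of the Proposition.

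I do not anticipate a genuine obstacle here: the statement is essentially the standard fact that the simple modules of a quotient algebra $A/I$ are the simple $A$-modules annihilated by $I$. The only point requiring a word of care is that one is quotienting by a two-sided ideal $R_n$ (so that $Q_n$ is genuinely an algebra and the inflation/deflation functors make sense), which is exactly the content of Definition \ref{def:q}, where $R_n=\langle E_{\ell-1}\rangle$ is declared to be the ideal generated by $E_{\ell-1}$. If anything, the substantive work is deferred to the \emph{sequel} of this proposition — namely determining for which $t$ one actually has $R_nL_t=0$ — rather than to its proof.
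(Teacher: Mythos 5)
Your argument is correct and is essentially the same as the paper's: both proofs use the standard inflation/deflation equivalence between $Q_n$-modules and $\TL_n$-modules annihilated by $R_n$, and then invoke Theorem~\ref{thm:tlcomp}(3) to identify the simple $\TL_n$-modules with the $L_t$, $t\in\CT(n)$.
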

\begin{proof}
Recall that $Q_n=\TL_n/R_n$.
 If $M$ is any $Q_n$-module, it may be lifted via the 
canonical surjection $\TL_n\overset{\eta_n}{\lr} Q_n$ to a $\TL_n$-module which we denote by $\wt M$, on which
$R_n$ acts trivially. Conversely, if $\wt M$ is any $\TL_n$-module on which $R_n$ acts trivially, the action factors
through $\TL_n/R_n=Q_n$, so that $\wt M$ may be thought of as a $Q_n$-module $M$.

Moreover, it is clear that $M$ is simple as $Q_n$-module if and only if $\wt M$ is simple as $\TL_n$-module.

Suppose now that $M$ is a simple $Q_n$-module. Then $\wt M$ is a simple $\TL_n$-module, and hence by
 Theorem \ref{thm:tlcomp} (3), is isomorphic to $L_t$ for some $t\in\CT(n)$ and by the above remarks, $R_n$
 acts trivially on $L_t$. Conversely, if $L_t$ satisfies $R_n L_t=0$, then $L_t$ is a simple $Q_n$-module. 
 \end{proof}
 
 \begin{remark}\label{rem:zero}
 If $N$ is a $\TL_n$ module, then since $R_n=\TL_n E_{\ell-1}\TL_n$,
 it follows that $R_n N=0$ if and only if $E_{\ell-1}N=0$. Thus the condition in the Proposition
 is relatively straightforward to check.
 \end{remark}
 
  \begin{remark}[Remark concerning notation]\label{rem:zero} Although {\it a priori} $E_{\ell-1}\in\TL_{\ell-1}$,
  we have regarded it as an element of $\TL_n$ for any $n\geq\ell-1$. The strictly correct notation for $E_{\ell-1}\in\TL_n$,
where $n\geq\ell$, is $E_{\ell-1}\ot I^{\ot (n-\ell+1)}$, where the tensor product is in the Temperley-Lieb category $\bT$,
as described in \cite{GLA} or \cite{LZ5}; that is, it is described diagrammatically as juxtaposition of diagrams, and $I$ is the
identity diagram from $1$ to $1$. We shall use this notation freely below.
  \end{remark}
  
  \begin{theorem}\label{thm:nz} With notation as in Theorem \ref{thm:tlcomp}, let $t\in\CT(n)$ satisfy $t\geq \ell-1$.
  Then the idempotent $E_{\ell-1}\ot I^{\ot (n-\ell+1)}$ acts non-trivially on $L_t$. Thus $Q_n$ has at most $[\frac{\ell}{2}]$
  isomorphism classes of simple modules.
  \end{theorem}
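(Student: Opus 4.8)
The plan is to show that if $t \geq \ell-1$, then $E_{\ell-1} \otimes I^{\otimes(n-\ell+1)}$ acts non-trivially on the cell module $W_t(n)$, and that the image of this action is \emph{not} contained in $\Rad_t$, so that the induced action on $L_t = W_t/\Rad_t$ is non-zero. The basic diagrammatic mechanism is as follows: since $t \geq \ell-1$, among the monic $\TL$-diagrams $D : t \to n$ there is the canonical diagram $D_0$ whose first $\ell-1$ bottom points are connected straight up to the first $\ell-1$ top points of the source $t$, and the remaining $t-(\ell-1)$ through-strands together with $k = (n-t)/2$ nested cups fill out the rest. Applying $E_{\ell-1} \otimes I^{\otimes(n-\ell+1)}$ to $D_0$ simply glues $E_{\ell-1}$ onto the first $\ell-1$ through-strands of $D_0$; since $E_{\ell-1}$ is a non-zero idempotent in $\TL_{\ell-1}$ (Proposition~\ref{prop:rad}), this produces a non-zero element of $W_t(n)$. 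Thus $E_{\ell-1} \otimes I^{\otimes(n-\ell+1)}$ acts non-trivially on $W_t(n)$.

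The substantive step is to promote this to non-triviality on $L_t$, i.e.\ to check that $(E_{\ell-1}\otimes I^{\otimes(n-\ell+1)})\cdot W_t(n)$ is not swallowed by $\Rad_t$. The cleanest route is to use the cellular structure directly. Because $E_{\ell-1}$ is the Jones--Wenzl idempotent, the image $(E_{\ell-1}\otimes I^{\otimes(n-\ell+1)})W_t(n)$ is spanned by diagrams whose top $\ell-1$ strands pass through $E_{\ell-1}$; one computes the Gram matrix of the cellular form $(\,,\,)_t$ restricted to a suitable such vector, or equivalently evaluates $(v, v)_t$ for $v = (E_{\ell-1}\otimes I^{\otimes(n-\ell+1)})D_0$. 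By the recursive structure of $E_{\ell-1}$ and the standard identity for closing off a Jones--Wenzl idempotent, this inner product reduces to a product of quantum integers $[1]_q[2]_q\cdots[\ell-1]_q$ (up to sign and powers of the loop parameter), which is nonzero precisely because $|q^2| = \ell$ means the first vanishing quantum integer is $[\ell]_q$. Hence $v \notin \Rad_t$, so the image of $E_{\ell-1}\otimes I^{\otimes(n-\ell+1)}$ in $\End(L_t)$ is non-zero. An alternative, more structural argument: embed $\TL_{\ell-1} \hookrightarrow \TL_n$ and note that $L_t(n)$ restricted to $\TL_{\ell-1}$ contains the cell module $W_{\ell-1}(\ell-1) = L_{\ell-1}(\ell-1)$ as a composition factor on which $E_{\ell-1}$ acts as the identity, using that $t \geq \ell-1$ and the branching rules for cell modules; since $E_{\ell-1}$ is central idempotent-like in its action (it kills all $f_i$, $i \leq \ell-2$), its action on that factor is the identity, hence nonzero on $L_t(n)$.

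For the counting conclusion: combining Theorem~\ref{thm:nz} (just proved) with Proposition~\ref{prop:sim} and Remark~\ref{rem:zero}, the simple $Q_n$-modules are exactly the $L_t$ with $t \in \CT(n)$ and $E_{\ell-1}\otimes I^{\otimes(n-\ell+1)}$ acting as $0$ on $L_t$; by the above this forces $t \leq \ell-2$. The set $\{t \in \CT(n) : 0 \leq t \leq \ell-2\}$ has at most $\lceil (\ell-1)/2 \rceil = [\ell/2]$ elements (since the $t$ in $\CT(n)$ of a fixed parity in $[0,\ell-2]$ number at most $[\ell/2]$), giving the stated bound on the number of isomorphism classes of simple $Q_n$-modules.

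The main obstacle is the promotion from $W_t(n)$ to $L_t(n)$: one must be sure the nonzero vector $v$ constructed above is not in the radical $\Rad_t$ of the cellular form. I expect the quantum-integer inner product computation (closing off the Jones--Wenzl idempotent and getting $\prod_{j=1}^{\ell-1}[j]_q \neq 0$) to be the technical heart, and the branching-rule alternative to require some care in identifying which $\TL_{\ell-1}$-composition factors survive in $L_t(n)$.
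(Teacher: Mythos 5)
Your overall strategy coincides with the paper's: pick the diagram $D_0 = I^{\ot t}\ot D'$ (with $D'$ a cup diagram from $0$ to $n-t$), show it is preserved by $E_{\ell-1}\ot I^{\ot(n-\ell+1)}$ and does not lie in $\Rad_t$. The gap is in the key inner-product step. You propose to evaluate $(v,v)_t$ for $v=(E_{\ell-1}\ot I^{\ot(n-\ell+1)})D_0$ by ``closing off the Jones--Wenzl idempotent'' and assert the result is the non-zero product $[1]_q\cdots[\ell-1]_q$. But closing off $E_{\ell-1}$ computes (up to normalisation) the Markov trace $\tr_{\ell-1}(E_{\ell-1})$, which is proportional to $[\ell]_q$ and so vanishes exactly when $|q^2|=\ell$ --- this is precisely why $E_{\ell-1}$ generates the radical of the Jones form, and it is exactly the mechanism used in the proof of Theorem~\ref{thm:zero} to show that $E_{\ell-1}$ kills $L_{\ell-2}$. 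Followed through, your recipe would put $v$ \emph{into} $\Rad_t$, the opposite of what is needed.

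The correct observation, which you hint at but do not nail down, is both sharper and simpler. In the cell module $W_t(n)$ one has the exact identity $(E_{\ell-1}\ot I^{\ot(n-\ell+1)})D_0 = D_0$: by Theorem~\ref{thm:jwi} the coefficient of $I^{\ot(\ell-1)}$ in $E_{\ell-1}$ is $1$, and every other diagram summand of $E_{\ell-1}$ has fewer than $\ell-1$ through-strands, so its composite with $D_0 = I^{\ot t}\ot D'$ (where $t\geq\ell-1$) has fewer than $t$ through-strands and is therefore $0$ in $W_t(n)$. Hence $v=D_0$ and $\phi_t(D_0,D_0)=\bigl(-(q+q^{-1})\bigr)^{(n-t)/2}$, a power of the loop parameter, non-zero since $\ell\geq 3$. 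This gives $D_0\notin\Rad_t$, and since the idempotent fixes the non-zero image of $D_0$ in $L_t$, it acts non-trivially. Your ``alternative branching argument'' is also on shaky ground: branching of the simple quotients $L_t(n)$ in the non-semisimple regime is not the same as branching of cell modules, and you would need to justify that $W_{\ell-1}(\ell-1)$ survives as a composition factor of $\Res_{\TL_{\ell-1}}L_t(n)$. The counting at the end is correct.
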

  \begin{proof}
  We begin by showing that
  if $t\in\CT(n)$ and $t\geq \ell-1$ then $E_{\ell-1}\ot I^{\ot (n-\ell+1)}W_t$ contains  all diagrams of the
 form $I^{\ot t}\ot D'$, where $D'$ is any monic diagram from $0$ to $n-t$.

  To see this, note that $W_t$ is spanned by monic diagrams from $t$ to $n$ in $\bT$. 
Take $D=I^{\ot t}\ot D'\in W_t$, where $D'$ is any (monic) diagram
from $0$ to $n-t$.   By the formula in Theorem \ref{thm:jwi}, the coefficient of $I^{\ot(\ell-1)}$ in $E_{\ell-1}$
is $1$. Since all the other summands act trivially on $D$ (because they reduce the number of `through strings'), it follows that 
$E_{\ell-1}\ot I^{\ot (n-\ell+1)}D=D$ in $W_t$, and hence that $D\in E_{\ell-1}\ot I^{\ot (n-\ell+1)}W_t$.

Now if $D=I^{\ot t}\ot D'$ as above and $\phi_t$ is the canonical bilinear form on $W_t$ (see \cite[\S 2]{GL}), then 
$\phi_t(D,D)$ is a power of $-(q+q\inv)$, and hence is non-zero. It follows from \cite[Cor. (2.5)(ii)]{GL} that $W_t=\TL_n D$,
whence $W_t=R_n W_t$. Moreover it also follows that $D\not\in\Rad_t$, and hence that modulo $\Rad_t$, $D$ generates
$L_t$, whence $R_n L_t=L_t$.
  \end{proof}
  
It follows from the above result that the only possible simple $Q_n$-modules are the $L_t$ with $t< \ell-1$. 
\begin{theorem}\label{thm:zero}
The simple $Q_n$ modules are the $L_t$ with $t\leq \ell-2$. 
\end{theorem}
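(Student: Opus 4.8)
The plan is to complement Theorem~\ref{thm:nz}, which shows the $L_t$ with $t \geq \ell-1$ are \emph{not} $Q_n$-modules, by proving that each $L_t$ with $t \leq \ell-2$ \emph{is} annihilated by the ideal $R_n = \langle E_{\ell-1}\rangle$. By Remark~\ref{rem:zero} it suffices to show that $E_{\ell-1}\ot I^{\ot(n-\ell+1)}$ acts as zero on $L_t$ for $0 \leq t \leq \ell-2$ with $t \equiv n \pmod 2$. The natural first reduction is to pass from $L_t$ to the cell module $W_t$: since $L_t = W_t/\Rad_t$, it is enough to show $E_{\ell-1}\ot I^{\ot(n-\ell+1)} W_t \subseteq \Rad_t$.

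First I would set up the diagrammatic description: $W_t(n)$ has as basis the monic diagrams $D : t \to n$, and $E_{\ell-1}\ot I^{\ot(n-\ell+1)}$ acts by stacking $E_{\ell-1}$ (a linear combination of diagrams $\ell-1 \to \ell-1$) onto the first $\ell-1$ of the $n$ output strings. The key structural fact about $E_{\ell-1}$, from Proposition~\ref{prop:rad}, is that $f_i E_{\ell-1} = E_{\ell-1} f_i = 0$ for $1 \leq i \leq \ell-2$; equivalently $E_{\ell-1}$ is the Jones--Wenzl projector killing all diagrams with fewer than $\ell-1$ through-strings. When we apply $E_{\ell-1}\ot I^{\ot(n-\ell+1)}$ to a monic diagram $D : t \to n$, the composite $E_{\ell-1} D$ either produces a diagram with the full $\ell-1$ through-strings on the left block (so $\ell-1 \leq$ number of through-strings of $D$ in that block) or is annihilated. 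Since $t \leq \ell-2 < \ell-1$, the diagram $D$ has only $t$ through-strings total reaching down to the $t$ inputs; I would argue that the image $E_{\ell-1}\ot I^{\ot(n-\ell+1)} D$, expressed in the diagram basis of $W_t$, is a combination of monic diagrams $t \to n$ whose restriction to the top $\ell-1$ strands is forced to be the Jones--Wenzl form, and then show any such element lies in the radical of the bilinear form $(\;,\;)_t$.

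The cleanest way to see the radical containment is via the trace form: $R_n$ is by definition the radical of the non-degenerate-generically trace $\tr_n$, so $R_n$ acts by zero on any module carrying a form compatible with $\tr_n$ — but more directly, one computes $(E_{\ell-1}\ot I^{\ot(n-\ell+1)} D_1, D_2)_t$ for basis diagrams $D_1, D_2$ and shows it vanishes. Using equivariance, $(E_{\ell-1}\ot I^{\ot(n-\ell+1)} D_1, D_2)_t = (D_1, (E_{\ell-1}\ot I^{\ot(n-\ell+1)})^* D_2)_t$, and since $E_{\ell-1}$ is self-adjoint and idempotent, this is $(E_{\ell-1}\ot I^{\ot(n-\ell+1)} D_1, E_{\ell-1}\ot I^{\ot(n-\ell+1)} D_2)_t$; the point is then that the closed form $D_1^* (E_{\ell-1}\ot I) D_2 : t \to t$ always factors through a diagram with $\ell-1$ strands in the $E_{\ell-1}$-block, forcing at least $\ell-1$ strands to be paired off, which (because $t < \ell-1$) means the resulting $t \to t$ diagram is never a multiple of $\id_t$ unless some through-strand is captured by $E_{\ell-1}$ — and that is exactly the situation $E_{\ell-1}$ kills. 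So the form value is $0$, giving $E_{\ell-1}\ot I^{\ot(n-\ell+1)} W_t \subseteq \Rad_t$, hence $R_n L_t = 0$.

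The main obstacle I anticipate is making the ``through-strand counting'' argument precise and watertight: one must carefully track how composing with $E_{\ell-1}$ on a sub-block of strands interacts with the through-strand count of a cell-module diagram, and rule out the possibility that $E_{\ell-1}$'s leading term $I^{\ot(\ell-1)}$ produces a surviving contribution of the identity type. The resolution is that the $I^{\ot(\ell-1)}$ term acts as the identity on $D$, so $E_{\ell-1}\ot I^{\ot(n-\ell+1)} D$ actually \emph{equals} $D$ plus lower terms when $t \geq \ell-1$ (the Theorem~\ref{thm:nz} case), but when $t \leq \ell-2$ the block of $\ell-1$ output strands must contain at least $\ell-1-t \geq 1$ strands that are joined to each other (not to inputs), and acting by $E_{\ell-1}$ on such a configuration is zero because $E_{\ell-1}$ annihilates any diagram factoring through fewer than $\ell-1$ through-strings, i.e. through any $f_i$. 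Alternatively, one can invoke Proposition~\ref{prop:sim} together with the explicit dimension bookkeeping: $\sum_{t \leq \ell-2} (\dim L_t(n))^2$ must equal $\dim Q_n(\ell)$, and since Theorem~\ref{thm:nz} already eliminates all $t \geq \ell-1$, the complementary $L_t$ with $t \leq \ell-2$ are forced to be exactly the simple $Q_n$-modules — this sidesteps the diagrammatics at the cost of needing the dimension formula for $Q_n(\ell)$, which is available from the fusion-category discussion in \S\ref{s:fus}. I would present the direct diagrammatic proof as primary and remark on the counting alternative.
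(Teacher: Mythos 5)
Your overall reduction is sound and matches the paper's strategy: it suffices (by Proposition~\ref{prop:sim} and Remark~\ref{rem:zero}) to show that $e := E_{\ell-1}\ot I^{\ot(n-\ell+1)}$ maps $W_t$ into $\Rad_t$ for $t\leq\ell-2$, and the self-adjointness/idempotency of $e$ together with the equivariance of $\phi_t$ is exactly the right tool. However, your core diagrammatic claim --- that for $t\leq \ell-2$ ``the block of $\ell-1$ output strands must contain at least $\ell-1-t$ strands that are joined to each other,'' so that $E_{\ell-1}$ kills $D$ outright --- is false precisely in the hardest case $t=\ell-2$. Consider a monic diagram of the form $D=I^{\ot(\ell-2)}\ot D'$: its first $\ell-2$ top points are through-strands, and the single remaining point $\ell-1$ in the block is joined to a top point \emph{outside} the block. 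There is no cup inside positions $1,\dots,\ell-1$, so harmonicity of $E_{\ell-1}$ gives no annihilation; indeed $eD$ is a genuinely nonzero element of $W_{\ell-2}$. Your later assertion that $D_1^*eD_2$ ``is never a multiple of $\id_t$ unless some through-strand is captured by $E_{\ell-1}$'' fails for the same reason, and doesn't even parse for $t=0$, where every morphism $0\to 0$ is a scalar multiple of $\id_0$.

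The missing ingredient --- which the paper's proof supplies --- is the arithmetic fact that $\tr_{\ell-1}(E_{\ell-1})=0$ when $|q^2|=\ell$. For $D_i=I^{\ot(\ell-2)}\ot D'_i$ the closed form $D_2^*eD_1$ is obtained by wiring all $\ell-1$ strands of $E_{\ell-1}$ around to close it up, and since any planar closure of $E_{\ell-1}$ not already killed by harmonicity is a scalar multiple of the Markov trace $\tr_{\ell-1}(E_{\ell-1})$, the $\id_{\ell-2}$-coefficient vanishes for this reason and not for a purely combinatorial one. Your proposal nowhere invokes the vanishing of this trace (which is special to roots of unity), so the argument as written cannot close. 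Finally, the ``dimension bookkeeping'' alternative you offer as a fallback is circular in the paper's logical order: the dimension formula for $Q_n(\ell)$ via $\End_{\U_q}(\Delta_q(1)^{\ut n})$ (Proposition~\ref{prop:endqn} and Corollary~\ref{cor:dimq}) is itself derived \emph{from} the classification of simple $Q_n$-modules established in Theorem~\ref{thm:zero}, so it cannot be used to prove it.
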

\begin{proof}
In view of Proposition \ref{prop:sim} and Theorem \ref{thm:nz}, it suffices to show that 
$(E_{\ell-1}\ot I^{\ot (n-\ell+1)})L_{\ell-2}= 0$ and $(E_{\ell-1}\ot I^{\ot (n-\ell+1)})W_t=0$
for $t\leq \ell-3$.

Consider first the case $t\leq \ell-3$. Then any monic diagram $D:t\to n$ contains an upper horizontal arc 
whose right vertex $\leq \ell-1$. Since $E_{\ell-1}$ is harmonic in $\TL_{\ell-1}$, 
it follows that $(E_{\ell-1}\ot I^{\ot (n-\ell+1)})D=0$, and hence that 
$(E_{\ell-1}\ot I^{\ot (n-\ell+1)})W_t=0$.

Now consider the case $t=\ell-2$. Then $g(t)=\ell$ (see the statement of Theorem \ref{thm:tlcomp}) and it follows
from Theorem \ref{thm:tlcomp}(2) and the fact that $n\geq \ell$, that $W_{\ell-2}(n)$ has composition factors
$L_{\ell-2}$ and $L_\ell$. If $D:t\to n$ is a monic diagram, then by the harmonic nature of $E_{\ell-1}$,
$(E_{\ell-1}\ot I^{\ot (n-\ell+1)})D=0$ unless $D=I^{\ot (\ell-2)}\ot D'$, where $D'$ is a diagram from $0$ 
to $n-\ell+2$. But in this case an inspection of the diagrams shows that if $(E_{\ell-1}\ot I^{\ot (n-\ell+1)})D=x\in W_{\ell-2}$,
then $\phi_{\ell-2}(x,x)$ is a multiple of $\tr_{\ell-1}(E_{\ell-1})=0$. 

More generally, if $D_1,D_2$ are diagrams
in $W_{\ell-2}$ and $(E_{\ell-1}\ot I^{\ot (n-\ell+1)})D_i=x_i$ ($i=1,2$), then the same argument shows that 
$\phi_{\ell-2}(x_1,x_2)=0$. 

It follows that for any diagrams $D_1,D_2\in W_{\ell-2}$, $\phi_{\ell-2}((E_{\ell-1}\ot I^{\ot (n-\ell+1)})D_1,D_2)=0$,
since $E_{\ell-1}\ot I^{\ot (n-\ell+1)}$ is idempotent and self dual, so that 
$$
\begin{aligned}
\phi_{\ell-2}((E_{\ell-1}\ot I^{\ot (n-\ell+1)})D_1,D_2)=&\phi_{\ell-2}((E_{\ell-1}\ot I^{\ot (n-\ell+1)})^2D_1,D_2)\\
=&\phi_{\ell-2}((E_{\ell-1}\ot I^{\ot (n-\ell+1)})D_1,(E_{\ell-1}\ot I^{\ot (n-\ell+1)})D_2)\\
=&0.\\
\end{aligned}
$$

Hence $R_n W_{\ell-2}\subseteq\Rad_{\ell-2}$, and it follows that $R_n L_{\ell-2}=0$.
\end{proof}

\subsection{Dimensions of the simple $Q_n(\ell)$-modules} Since the simple $Q_n(\ell)$-modules are just the $L^{(\ell)}_t(n)$
$0\leq t\leq \ell-2$, $t\equiv n\text{(mod }2)$, their dimensions are given by the formula \eqref{eq:tcheb}. That is,
\be\label{eq:dimlq}
\sum_{k=0}^\infty \dim(L^{(\ell)}_t(t+2k))x^k=\frac{p_{\ell-1-t}(x)}{p_\ell(x)}.
\ee

\subsection{The case $\ell=4$. Clifford algebras} We have seen in \eqref{exx:L}(2) that $Q_{2n+1}(4)$ has just one simple module, whose dimension
is $2^n$ and that $Q_{2n}(4)$ has two simple modules, both of dimension $2^{n-1}$. It follows (see also the general formula
\eqref{eq:dimq}) that $\dim Q_n(4)=2^{n-1}$ for $n\geq 1$. We shall see in this section that in this case, $Q_n$ is actually a Clifford algebra.
Because of its connection to the Ising model in statistical mechanics \cite{M}, we shall refer to the $Q_n(4)$ as the Ising algebras.

Let $U$ be a complex vector space of finite dimension $n$, with a non-degenerate 
symmetric bilinear form $\langle -,-\rangle$. Then $U$ has an orthonormal basis $u_1,\dots,u_n$, which enjoys 
the property that $\la u_i,u_j\ra=\delta_{ij}$. If  $\gamma_i=\frac{1}{\sqrt {2}}u_i$ for $i=1,\dots,n$, then for any $i,j$, 
\be\label{eq:or}
\la \gamma_i,\gamma_j\ra=\frac{1}{2}\delta_{i,j}.
\ee

The Clifford algebra $\cC_n=\cC(U,\la-,-\ra)$ (for generalities about Clifford algebras we refer the reader to \cite{DG}) is defined as
\be\label{eq:cl}
\cC_n=\frac{T(U)}{I},
\ee
where $T(U)=\oplus_{i=0}^\infty U^{\ot i}$ is the free associative $\C$-algebra (or tensor algebra) on $U$, and $I$ is the ideal
of $T(U)$ generated by all elements of the form $u\ot u-\la u,u\ra 1$ ($u\in U$). This last relation may equivalently be written 
(omitting the $\ot$ in the multiplication)
\be\label{eq:clrel}
uv+vu=2\la u,v\ra 1.
\ee

The algebra $\cC_n$ is evidently generated by any basis of $U$, and hence by \eqref{eq:or} and \eqref{eq:clrel} has the presentation
\be\label{eq:prescl}
\cC_n=\la \gamma_1,\dots,\gamma_n\mid \gamma_i\gamma_j +  \gamma_j\gamma_i=\delta_{ij}\text{ for }1\leq i,j\leq n \ra.
\ee

For any subset $J=\{j_1<j_2<\dots<j_p\}\subseteq \{1,\dots,n\}$, write $\gamma_J=\gamma_{j_1}\gamma_{j_2}\dots\gamma_{j_p}$.
It is evident that $\{\gamma_J\mid J\subseteq \{1,2,\dots,n\}\}$ is a basis of $\cC_n$ which is therefore $\Z_2$-graded (since the relations
are in the even subalgebra of the tensor algebra), 
the even (resp. odd) subspace being spanned by those $\gamma_J$ with $|J|$ even (resp. odd).

The following statement is now clear.

\begin{proposition}\label{prop:clf}
Th Clifford algebra $\cC(U,\la-,-\ra)$ has dimension $2^n$, where $n=\dim (U)$. Its even subalgebra $\cC_n^0$ has dimension $2^{n-1}$.
\end{proposition}

The next theorem is the main result of this section; it asserts that the Ising algebra is isomorphic to
the even subalgebra of the Clifford algebra.

\begin{theorem}\label{thm:qcl}
We continue to assume $\ell=4$ and that $q=-\exp(\frac{\pi i}{4})$. Other notation is as above.
For $n=3,4,\dots$ there are surjective homomorphisms $\phi_n:\TL_n(q)\to \cC_n^0$ which induce 
isomorphisms $\ol\phi_n:Q_n\overset{\simeq}{\lr}\cC^0_n$. 
\end{theorem}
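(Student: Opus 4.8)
The plan is to write down an explicit algebra homomorphism $\phi_n\colon\TL_n(q)\to\cC_n^0$, check that it annihilates the ideal $R_n(q)$, and then conclude with a dimension count. Since $\cC_n^0$ is generated by the products $\gamma_i\gamma_{i+1}$, $1\le i\le n-1$, it is natural to look for $\phi_n$ of the form $\phi_n(f_i)=a\bigl(1+b\,\gamma_i\gamma_{i+1}\bigr)$ for scalars $a,b\in\C$. Using $\gamma_i^2=\tfrac12$ and $\gamma_{i+1}\gamma_i=-\gamma_i\gamma_{i+1}$, so that $(\gamma_i\gamma_{i+1})^2=-\tfrac14$, one finds that the requirement $\phi_n(f_i)^2=\sqrt2\,\phi_n(f_i)$ (which is the relation $f_i^2=-(q+q\inv)f_i$ for our choice of $q$) forces $a=\tfrac1{\sqrt2}$ and $b=2i$. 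Thus I would define $\phi_n(f_i)=\tfrac1{\sqrt2}\bigl(1+2i\,\gamma_i\gamma_{i+1}\bigr)$.

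First I would check that $\phi_n$ is a well-defined algebra homomorphism, i.e. that the images of the $f_i$ satisfy the relations \eqref{eq:reltl}. The quadratic relation holds by the choice of $a,b$. When $|i-j|\ge2$ the index sets $\{i,i+1\}$ and $\{j,j+1\}$ are disjoint, so $\gamma_i\gamma_{i+1}$ and $\gamma_j\gamma_{j+1}$ commute and hence so do $\phi_n(f_i)$ and $\phi_n(f_j)$. The one genuine computation is the relation $f_if_{i\pm1}f_i=f_i$: writing $g_i=\gamma_i\gamma_{i+1}$ one has $g_i^2=g_{i+1}^2=-\tfrac14$ and $g_ig_{i+1}=-g_{i+1}g_i=\tfrac12\gamma_i\gamma_{i+2}$, and expanding $\phi_n(f_i)\phi_n(f_{i+1})\phi_n(f_i)$ using these identities collapses it to $\phi_n(f_i)$, with the symmetric computation giving $f_{i+1}f_if_{i+1}=f_{i+1}$. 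I expect this verification, with its sign bookkeeping in the Clifford algebra, to be the main (though still elementary) obstacle.

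Next I would establish surjectivity: from $\phi_n(f_i)=\tfrac1{\sqrt2}(1+2i\,g_i)$ we recover $g_i=\tfrac1{2i}\bigl(\sqrt2\,\phi_n(f_i)-1\bigr)$ in the image, and since $\gamma_a\gamma_b=2^{\,b-a-1}g_ag_{a+1}\cdots g_{b-1}$ and the even monomials $\gamma_J$ ($|J|$ even) form a basis of $\cC_n^0$, the $g_1,\dots,g_{n-1}$ generate $\cC_n^0$, so $\phi_n$ is onto. Then, using the formula $E_3=1+f_1f_2+f_2f_1-\sqrt2(f_1+f_2)$ recorded earlier for the Jones–Wenzl idempotent, a short computation in $\cC_3^0$ shows $\phi_n(E_3)=0$: the linear terms in $g_1,g_2$ and the $g_1g_2$ terms all cancel. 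Since $R_n(q)$ is the two-sided ideal of $\TL_n(q)$ generated by $E_{\ell-1}\ot I^{\ot(n-\ell+1)}$, which maps under $\phi_n$ to $\phi_n(E_3)=0$, we get $R_n(q)\subseteq\Ker\phi_n$, so $\phi_n$ descends to a surjection $\ol\phi_n\colon Q_n\to\cC_n^0$. Finally, $Q_n(4)$ is semisimple and by \eqref{eq:dimlq} its simple modules have dimension $2^{m}$ if $n=2m+1$ and dimensions $2^{m-1},2^{m-1}$ if $n=2m$, whence $\dim Q_n(4)=2^{n-1}=\dim\cC_n^0$ by Proposition~\ref{prop:clf}; a surjection between vector spaces of equal finite dimension is an isomorphism, so $\ol\phi_n$ is an isomorphism, as required.
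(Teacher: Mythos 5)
Your proposal is correct and takes essentially the same approach as the paper: define $\phi_n(f_j)=\tfrac1{\sqrt2}(1+2i\,\gamma_j\gamma_{j+1})$, verify the Temperley--Lieb relations and $\phi_n(E_3)=0$ (the paper cites Koo--Saleur for these verifications, whereas you carry them out directly), observe surjectivity onto $\cC_n^0$, and conclude by the dimension count $\dim Q_n(4)=2^{n-1}=\dim\cC_n^0$.
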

\begin{proof} Define $\phi_n(f_j)=\frac{1}{\sqrt 2}(1+2i\gamma_j\gamma_{j+1})$. It was remarked by 
Koo and Saleur \cite[\S 3.1 eq. (3.2)]{KS} (see also \cite{CE}) that the  $\phi_n(f_j)$ satisfy the relations
\eqref{eq:reltl} in $\cC_n$, and therefore that $\phi_n$ defines a homomorphism from $\TL_n$ to $\cC_n$,
and further that $E_3\in\ker(\phi_n)$. 

It is evident that the image of $\phi_n$ is $\cC_n^0$, and therefore that $\ol\phi_n:Q_n\to \cC_n^0$ is surjective.
But by Prop. \ref{prop:clf} these two algebras have the same dimension, whence
$\ol\phi_n$ is an isomorphism.
\end{proof}

\subsubsection{Canonical trace} Let $\TL_n(q)$ be the $n$-string Temperley-Lieb algebra as above, and assume $\delta:=-(q+q\inv)\neq 0$
is invertible. 
The canonical Jones trace $\tr_n$ on $\TL_n(q)$ was defined in \eqref{eq:jf}.
As pointed out in \eqref{eq:qss}, this trace descends to a non-degenerate trace on $Q_n$, satisfying similar properties.
In the case $\ell=4$ this amounts to the following statement.

\begin{proposition}
There is a canonical trace $\ol\tr_n$ on $\cC_n^0$, given by taking the constant term (coefficient of $1$) of any of its elements.
This trace corresponds to the Jones trace above in the sense that for $x\in Q_n$, $\tr_n(x)=\ol\tr_n(\phi(x))$. It is therefore non-degenerate.
\end{proposition}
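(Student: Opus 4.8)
The plan is to handle the three assertions -- that $\ol\tr_n$ is a trace, that $\ol\tr_n\circ\ol\phi_n=\tr_n$, and that $\ol\tr_n$ is non-degenerate -- essentially independently, and to begin with the trace and non-degeneracy properties by a direct computation in the basis $\{\gamma_J\}$ of $\cC_n$ (afterwards restricting to the even subalgebra $\cC_n^0$). Repeated use of the relations \eqref{eq:prescl} rewrites any product $\gamma_J\gamma_K$ as $\pm 2^{-|J\cap K|}\gamma_{J\,\triangle\, K}$, the common indices being absorbed by $\gamma_i^2=1/2$, and likewise for $\gamma_K\gamma_J$. Hence the coefficient of $1$ in $\gamma_J\gamma_K$ vanishes unless $J=K$, in which case $\gamma_J\gamma_K=\gamma_K\gamma_J=\gamma_J\gamma_J=\pm 2^{-|J|}\cdot 1$; by bilinearity $\ol\tr_n(ab)=\ol\tr_n(ba)$, and clearly $\ol\tr_n(1)=1$. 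The same computation shows that the symmetric form $(a,b)\mapsto\ol\tr_n(ab)$ is diagonal in the basis $\{\gamma_J\}$ with nonzero diagonal entries $\pm 2^{-|J|}$; its restriction to $\cC_n^0$ is again diagonal with nonzero entries, hence non-degenerate.

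Next I would identify $\ol\tr_n\circ\ol\phi_n$ with $\tr_n$ using the uniqueness clause of \eqref{eq:jf}. Since $\phi_n$ is an algebra map and $\ol\tr_n$ a trace, $\ol\tr_n\circ\phi_n$ is an invariant trace on $\TL_n(q)$ with $\ol\tr_n(\phi_n(1))=1$, so it suffices to verify $\ol\tr_n(\phi_n(x)\phi_n(f_i))=-(q+q\inv)\inv\ol\tr_n(\phi_n(x))$ for $x\in\TL_{i-1}$. The key observation is that $\phi_n(x)$ then lies in the subalgebra generated by $\gamma_1,\dots,\gamma_i$, so every basis monomial $\gamma_J$ occurring in $\phi_n(x)$ has $J\subseteq\{1,\dots,i\}$; since $\phi_n(f_i)=\frac{1}{\sqrt2}(1+2i\gamma_i\gamma_{i+1})$, each resulting monomial $\gamma_J\gamma_i\gamma_{i+1}$ still contains $\gamma_{i+1}$ and so contributes nothing to the constant term. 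Therefore $\ol\tr_n(\phi_n(x)\phi_n(f_i))=\frac{1}{\sqrt2}\ol\tr_n(\phi_n(x))$, and one checks $-(q+q\inv)\inv=\frac{1}{\sqrt2}$ for $q=-\exp(\frac{\pi i}{4})$ (indeed $q+q\inv=-\sqrt2$). Uniqueness then gives $\tr_n=\ol\tr_n\circ\phi_n$ on $\TL_n(q)$.

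Finally, since $E_3\in\ker\phi_n$ generates $R_n$ and $\tr_n$ annihilates $R_n$, both $\tr_n$ and $\ol\tr_n\circ\phi_n$ factor through $Q_n=\TL_n(q)/R_n$, yielding $\tr_n(x)=\ol\tr_n(\ol\phi_n(x))$ for $x\in Q_n$; non-degeneracy of $\ol\tr_n$ has already been established above (and also follows by transporting the non-degenerate trace on $Q_n$ along the isomorphism $\ol\phi_n$ of \thmref{thm:qcl}). I expect the only delicate point to be the sign-and-power-of-$2$ bookkeeping in the Clifford relations, together with the support statement for $\phi_n(x)$ when $x\in\TL_{i-1}$; everything else is formal.
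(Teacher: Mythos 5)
Your argument is correct and follows essentially the same route the paper sketches (the paper's one-line proof just says to verify the analogue of \eqref{eq:jf} for $\ol\tr_n$, which is precisely your key step, and then invoke uniqueness). Two minor remarks: for $x\in\TL_{i-1}$ the support of $\phi_n(x)$ actually lies in $\{1,\dots,i-1\}$, not $\{1,\dots,i\}$ as you wrote, but this over-estimate is harmless since all your argument needs is $i+1\notin J$; and your direct verification that the form $(a,b)\mapsto\ol\tr_n(ab)$ is diagonal with entries $\pm 2^{-|J|}$ in the $\gamma_J$-basis is a self-contained alternative to the paper's implicit deduction (which transports non-degeneracy of the Jones trace on $Q_n$, guaranteed by \eqref{eq:qss}, along the isomorphism $\ol\phi_n$), and is arguably cleaner.
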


The proof is easy, and consists in showing that $\ol\tr_n$ satisfies the analogue of \eqref{eq:jf} in $\cC_n^0$.

\subsubsection{The spinor representations of $\fso(n)$} We give yet another interpretation of the algebra in terms 
of the spin representations of $\fso(n)$. Let $\SO(n)$ be the special orthogonal group of the space $(U,\la-,-\ra)$
above. Its Lie algebra has basis the set of matrices (with respect to the orthogonal basis $(\gamma_i)$) 
$J_{ij}:=E_{ij}-E_{ji}$, $1\leq i<j\leq n$, where the $E_{ij}$ are the usual matrix units. This basis of $\fso(n)$
satisfies the commutation relations
\be\label{eq:com-e}
[J_{ij},J_{kl}]=\delta_{jk}J_{il}-\delta_{jl}J_{ik}-\delta_{ik}J_{jl}+\delta_{il}J_{jk}.
\ee
\begin{proposition}\label{prop:son}
For $n\geq 2$, there are surjective homomorphisms $\psi_n:\U(\fso(n))\to \cC_n^0\cong Q_n$, such that 
$\psi_n(J_{ij})=\omega_{ij}:=\frac{1}{2}(\gamma_i\gamma_j-\gamma_j\gamma_i)$. The irreducible spin representations
of $\fso(n)$ are realised on the simple $Q_n$-modules $L_0$ and $L_2$ when $n$ is even and on $L_1$ when $n$ is odd.
\end{proposition}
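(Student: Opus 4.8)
The plan is to verify directly that the map $\psi_n(J_{ij})=\omega_{ij}:=\frac12(\gamma_i\gamma_j-\gamma_j\gamma_i)=\gamma_i\gamma_j$ (the last equality holding for $i\neq j$ by the Clifford relation \eqref{eq:prescl}) is a Lie algebra homomorphism, hence extends to the universal enveloping algebra, and then to identify the resulting $\cC_n^0$-modules with the spin representations. First I would check that the $\omega_{ij}$ satisfy the relations \eqref{eq:com-e}: this is a routine computation using only $\gamma_a\gamma_b+\gamma_b\gamma_a=\delta_{ab}$, so that $[\gamma_i\gamma_j,\gamma_k\gamma_l]$ collapses to the four Kronecker-delta terms on the right of \eqref{eq:com-e}. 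Since $\fso(n)$ is generated as a Lie algebra by the $J_{ij}$ subject to \eqref{eq:com-e} (these are all the relations in a Chevalley-type presentation, or one may simply note $\fso(n)=\mathrm{span}\{J_{ij}\}$ with bracket \eqref{eq:com-e}), this gives an algebra homomorphism $\psi_n:\U(\fso(n))\to\cC_n^0$, and $\psi_n(J_{ij})=\omega_{ij}$ as claimed.

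Next I would prove surjectivity of $\psi_n$. The image is the subalgebra of $\cC_n^0$ generated by the products $\gamma_i\gamma_j$. Since any even monomial $\gamma_J$ with $|J|=2p$ can be written as a product of $p$ such quadratics, the generated subalgebra is all of $\cC_n^0$; thus $\psi_n$ is onto, and composing with $\ol\phi_n\inv$ from \thmref{thm:qcl} gives the surjection $\U(\fso(n))\to Q_n$. (Note $\cC_n^0\cong Q_n$ requires $\ell=4$ via \thmref{thm:qcl}; this is the ambient identification under which the statement is phrased.)

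It remains to identify the simple $Q_n$-modules as the spin modules. For this I would invoke the classical structure theory of Clifford algebras (e.g. \cite{DG}): for $n=2m$, $\cC_n\cong\mathrm{Mat}_{2^m}(\C)$ and its even part $\cC_n^0\cong\mathrm{Mat}_{2^{m-1}}(\C)\oplus\mathrm{Mat}_{2^{m-1}}(\C)$, whose two simple modules of dimension $2^{m-1}$ are precisely the two half-spin representations of $\fso(2m)$; matching dimensions with $\dim L_0=\dim L_2=2^{m-1}$ from \eqref{exx:L}(2) identifies them with $L_0$ and $L_2$. For $n=2m+1$, $\cC_n^0\cong\mathrm{Mat}_{2^m}(\C)$ is simple with unique simple module of dimension $2^m$, which is the (unique) spin representation of $\fso(2m+1)$; comparing with $\dim L_1=2^m$ from \eqref{exx:L}(2) identifies it with $L_1$. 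That the $\cC_n^0$-action obtained by restriction along $\psi_n$ is indeed the spin representation of $\fso(n)$ follows because the $\omega_{ij}=\gamma_i\gamma_j$ are exactly the standard generators of $\fso(n)$ inside the Clifford algebra in the usual construction of spinors.

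The main obstacle is not any single hard computation but rather pinning down the identification in the last paragraph cleanly: one must be careful that the abstract simple $Q_n$-modules $L_t$ (defined via cell modules and the Jones form) really do correspond, under $\ol\phi_n$ and $\psi_n$, to the half-spin / spin modules, and not merely that the dimensions agree. Since for $n$ even there are two non-isomorphic simples of equal dimension, a dimension count alone does not distinguish $L_0$ from $L_2$; however, as both half-spin representations occur and there are exactly two simples, the correspondence is forced as an unordered pair, which is all the statement asserts. For $n$ odd uniqueness makes the identification immediate.
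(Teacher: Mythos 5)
Your proposal is correct and takes essentially the same route as the paper: the paper's proof is explicitly just a sketch, verifying that the $\omega_{ij}$ satisfy \eqref{eq:com-e} and observing that surjectivity follows because $\omega_{ij}=\gamma_i\gamma_j$ (so the image contains all of $\cC_n^0$). Your additional paragraph spelling out the identification of the $L_t$ with the spin and half-spin modules via the classical structure of $\cC_n^0$ fills in a point the paper leaves entirely implicit, and your remark that for $n$ even the two half-spin modules are identified with $\{L_0,L_2\}$ only as an unordered pair is a fair reading of the statement.
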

\begin{proof} As this is well known, we give merely a sketch of the argument.
To show that $\psi_n$ defines a homomorphism, it suffices to observe that the $\omega_{ij}$ satisfy the same commutation
relations \eqref{eq:com-e} as the $J_{ij}$, and this is straightforward. The surjectivity of $\psi_n$ is evident from the observation
that $\omega_{ij}=\gamma_i\gamma_j$, which shows that the image of $\psi_n$ contains the whole of $\cC_n^0\simeq Q_n$.
\end{proof}

\section{The algebras $Q_n(\ell)$ and the Reshetikhin-Turaev-Andersen fusion category.}\label{s:fus}

We show in this section that $Q_n(\ell)$ is the endomorphism algebra of a certain truncated tensor product of modules 
for $\U_q=\U_q(\fsl_2)$, where $q$ is such that $q^2$ is a primitive $\ell^{\text{th}}$ root of unity. An observation about the 
relevant fusion category permits the determination of the dimension of $Q_n(\ell)$. We assume throughout that $\ell\geq 3$.

\subsection{Tilting modules for $\U_q(\fsl_2)$} For $n\in\N$, let $\Delta_q(n)$ be the Weyl module (cf. \cite[\S 1]{ALZ})
of the quantum group $\U_q=\U_q(\fsl_2)$ and let $T_q(n)$ be the unique indecomposable
tilting module for $\U_q$ with highest weight $n$ \cite[\S 5]{ALZ}. 

It follows from \cite[Thm. 5.9]{ALZ} that for $n\in\N$,
\be\label{eq:tdec}
\Delta_q(1)^{\ot n}\simeq \bigoplus_{t\in\N}l_t(n)  T_q(t),
\ee
where $l_t(n)=\dim(L_t(n))$ is the dimension of the simple $\TL_n(q)$-module $L_t(n)$. Note that $l_t(n)$ is non-zero
only if $t\equiv n\text{(mod }2)$.

Further, the structure of the tilting modules $T_q(m)$ is described in \cite[Prop. 6.1]{ALZ} as follows. 

\be\label{eq:tilting}
\begin{aligned}
(1)&\text{ If $m<\ell$ or $m\equiv -1\text{(mod }\ell)$, then $T_q(m)\simeq\Delta_q(m)$ is a simple $\U_q$-module.}\\
(2)&\text{ If $m=a\ell+b$ with $a\geq 1$ and $0\leq b\leq \ell-2$, then $T_q(m)$ has a submodule}\\
&\text{ isomorphic to $\Delta_q(m)$ such that $\frac{T_q(m)}{\Delta_q(m)}\simeq \Delta_q(g^{-1}(m))$,}
\end{aligned}
\ee
where $g$ is the function defined by \eqref{eq:g}.

\subsection{Andersen's fusion category} Andersen proved in \cite[Thm. 3.4]{A}
a general result for quantised enveloping algebras at a root of unity,
which implies in the case of $\U_q(\fsl_2)$ that the tilting modules $T_q(m)$ with $0\leq m\leq\ell-2$
are precisely those tilting modules which have endomorphisms with non-zero quantum trace. This may easily be verified directly
in our case using the description \eqref{eq:tilting} of the tilting modules. Andersen's result \cite[Cor. 4.2]{A} (see also \cite{RT}) implies 
that in our case, we have the following result.

\begin{proposition}\label{prop:tens} Let $M,N$ be tilting modules for $\U_q(\fsl_2)$. 
Write $$M\ot N=\bigoplus_{n\in\N}m_nT_q(n),$$ and
define the reduced tensor product $\ut$ by 
\be\label{eq:redt}
M\ut N=\bigoplus_{n=0}^{\ell-2}m_nT_q(n).
\ee
Then the reduced tensor product $\ut$ is associative.
\end{proposition}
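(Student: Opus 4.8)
The plan is to prove associativity by exhibiting the reduced tensor product as the composite of the ordinary tensor product with a projection functor, and then showing this projection is compatible with the tensor structure. Concretely, let $\mathcal{T}$ denote the full subcategory of $\U_q$-tilting modules and let $\mathcal{N}\subseteq\mathcal{T}$ be the full subcategory of negligible tilting modules, i.e. those all of whose indecomposable summands $T_q(m)$ have $m\equiv-1\pmod{\ell}$ or $m\geq\ell$ (equivalently, quantum dimension zero, by Andersen's characterisation quoted before Proposition~\ref{prop:tens}). The first step is to record that $\mathcal{N}$ is a tensor ideal: if $M$ is negligible and $N$ is an arbitrary tilting module, then $M\ot N$ is again negligible. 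This is the technical heart of the matter, and I expect it to be the main obstacle; in the $\U_q(\fsl_2)$ case it can be checked by hand using the explicit decomposition \eqref{eq:tilting} of the $T_q(m)$ together with the quantum Clebsch--Gordan rule for $\Delta_q(m)\ot\Delta_q(1)$, reducing to the statement that $T_q(m)\ot\Delta_q(1)$ is negligible whenever $T_q(m)$ is, and then inducting on the highest weight of $N$; alternatively one invokes the general fact (Andersen, \cite{A}; see also \cite{RT}) that the ``trace radical'' of a pivotal category is a tensor ideal, since $\mathcal{N}$ is exactly the class of objects $X$ with $\mathrm{tr}_q(f)=0$ for all $f\in\End(X)$.

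Granting that $\mathcal{N}$ is a tensor ideal, the second step is purely formal. Since every tilting module decomposes uniquely (up to isomorphism and reordering) as a direct sum of indecomposables, there is a canonical decomposition $M\cong M^{+}\oplus M^{0}$ for each tilting module $M$, where $M^{+}$ is the sum of the summands $T_q(n)$ with $0\leq n\leq\ell-2$ and $M^{0}\in\mathcal{N}$ is the sum of the remaining summands; with the notation of Proposition~\ref{prop:tens}, $M^{+}=\bigoplus_{n=0}^{\ell-2}m_nT_q(n)$, so that $M\ut N:=(M\ot N)^{+}$. The third step is to verify that for any tilting modules $M,N$ one has a natural isomorphism $(M\ot N)^{+}\cong(M^{+}\ot N^{+})^{+}$. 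Indeed, writing $M=M^{+}\oplus M^{0}$ and $N=N^{+}\oplus N^{0}$ and expanding $M\ot N$ distributively, the three cross-terms $M^{+}\ot N^{0}$, $M^{0}\ot N^{+}$, $M^{0}\ot N^{0}$ all lie in $\mathcal{N}$ by the tensor-ideal property, hence contribute nothing to the $(-)^{+}$ part; so $(M\ot N)^{+}=(M^{+}\ot N^{+})^{+}=M^{+}\ut N^{+}$, which is precisely $M\ut N$ since $M^{+}$ and $N^{+}$ have no negligible summands.

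With this identity in hand, associativity is immediate: applying it twice,
\[
(M\ut N)\ut P=\big((M\ut N)\ot P\big)^{+}=\big((M\ot N)^{+}\ot P\big)^{+}=\big((M\ot N)\ot P\big)^{+},
\]
where the last equality again uses that replacing $M\ot N$ by its negligible complement $(M\ot N)^{0}$ inside $(M\ot N)\ot P$ only changes the result by a negligible summand. By the mirror-image computation, $M\ut(N\ut P)=\big(M\ot(N\ot P)\big)^{+}$ as well, and these two agree because the ordinary tensor product $\ot$ is associative. The only genuine content beyond bookkeeping is Step~1, so in writing this up I would state the tensor-ideal property as a lemma, prove it via the explicit $\fsl_2$ fusion rule and \eqref{eq:tilting}, and then present Steps~2--4 as the short formal argument above.
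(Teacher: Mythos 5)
The paper gives no proof of this proposition: it is deduced directly from Andersen's \cite[Cor.\ 4.2]{A} (see also \cite{RT}). Your argument correctly reconstructs the standard reasoning behind that citation. The only non-formal content, as you rightly identify, is that the negligible tilting modules (sums of $T_q(m)$ with $m\geq\ell-1$, equivalently those of quantum dimension zero) form a tensor ideal; that is exactly what Andersen supplies, and your hands-on alternative via \eqref{eq:tilting} and the Clebsch--Gordan rule is a sound way to verify it for $\fsl_2$ if you prefer not to cite. Once that lemma is granted, your Steps~2--4 --- decomposing each tilting module canonically as $M^{+}\oplus M^{0}$, noting that the cross-terms in $M\ot N$ are negligible, and deducing that both $(M\ut N)\ut P$ and $M\ut(N\ut P)$ equal the non-negligible part of $M\ot N\ot P$ --- are the standard formal consequence. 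So this is not a genuinely different route; it is the detail the paper chose to leave to the reference, and it is correct.
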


This implies that we have a semisimple tensor category $\cC_{\rm reg}$ with objects the tilting modules
$\oplus_{n=0}^{\ell-2}m_nT_q(n)(=\oplus_{n=0}^{\ell-2}m_n\Delta_q(n))$ ($m_n\in\N$), and tensor product $\ut$.

\begin{definition}\label{def:ip}
For modules $M=\oplus_{n=0}^{\ell-2}m_nT_q(n)$ and $M'=\oplus_{n=0}^{\ell-2}m'_nT_q(n)$, define
\be\label{eq:ip}
(M,M')_{\U_q}=\dim(\Hom_{\U_q}(M,M'))=\sum_{n=0}^{\ell-2}m_nm'_n.
\ee
\end{definition}

\begin{lemma}\label{lem:tens}
Let $M,N\in\cC_{\rm red}$. Then 
\be\label{eq:tens}
(M\ut\Delta_q(1),N)_{\U_q}=(M,N\ut\Delta_q(1))_{\U_q}
\ee
\end{lemma}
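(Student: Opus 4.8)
The plan is to reduce the statement to a Frobenius-reciprocity-type property of the reduced tensor product, exactly as one does for ordinary tensor products of modules over a group or Hopf algebra. The key point is that $\Delta_q(1) = T_q(1)$ is self-dual (its highest weight $1$ is fixed under $m\mapsto -m$ for $\fsl_2$, and it is simple, hence isomorphic to its own dual), and that the reduced tensor product is, by Proposition~\ref{prop:tens}, a genuine tensor product in the semisimple category $\cC_{\rm red}$. So I would first record that in $\cC_{\rm red}$ we have $\Hom_{\U_q}(M\ut\Delta_q(1),N)\cong\Hom_{\U_q}(M,N\ut\Delta_q(1)^*)$ and then invoke $\Delta_q(1)^*\cong\Delta_q(1)$.

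Concretely, the steps are as follows. First I would reduce to the case where $M=T_q(s)$ and $N=T_q(t)$ are single indecomposable (= simple, since $s,t\le\ell-2$) objects of $\cC_{\rm red}$, since both sides of \eqref{eq:tens} are additive in $M$ and in $N$. Second, I would compute $T_q(s)\ut\Delta_q(1)$ explicitly: using the classical $\fsl_2$ Clebsch--Gordan rule $\Delta_q(s)\ot\Delta_q(1)\simeq T_q(s+1)\oplus T_q(s-1)$ (valid since these are tilting, and for $s+1\le\ell-2$ this is just $\Delta_q(s+1)\oplus\Delta_q(s-1)$), and then truncating, one finds $T_q(s)\ut\Delta_q(1)=\bigoplus_{u}T_q(u)$ where $u$ ranges over $\{s-1,s+1\}\cap\{0,1,\dots,\ell-2\}$. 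The only subtlety is the boundary: when $s=\ell-2$, the summand $T_q(\ell-1)$ that would appear has $b(\ell-1)=\ell-1$, i.e.\ quantum dimension zero, and is truncated away, so $T_q(\ell-2)\ut\Delta_q(1)=T_q(\ell-3)$ only; when $s=0$, $T_q(-1)$ does not occur, so $T_q(0)\ut\Delta_q(1)=T_q(1)$. Third, with this formula in hand, $(T_q(s)\ut\Delta_q(1),T_q(t))_{\U_q}$ is simply the indicator that $t\in\{s-1,s+1\}$ (intersected with $[0,\ell-2]$), and by the evident symmetry of the condition ``$|s-t|=1$ and both lie in $[0,\ell-2]$'' this equals $(T_q(s),T_q(t)\ut\Delta_q(1))_{\U_q}$, which proves \eqref{eq:tens}.

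Alternatively, and perhaps more cleanly, I would avoid case analysis by arguing structurally: in any semisimple rigid (or even just pivotal) tensor category, for any object $X$ with $X\cong X^*$ one has a natural isomorphism $\Hom(M\ot X,N)\cong\Hom(M,N\ot X)$, obtained by composing with the coevaluation and evaluation maps for $X$; applying this in $\cC_{\rm red}$ with $X=\Delta_q(1)$ and taking dimensions gives \eqref{eq:tens} at once. The facts needed are that $\cC_{\rm red}$ is a semisimple monoidal category under $\ut$ (Proposition~\ref{prop:tens}), that it inherits rigidity from the category of tilting modules (duals of tilting modules are tilting, and the truncation is compatible with duality because $T_q(m)^*\cong T_q(m)$ for $\fsl_2$), and that $\Delta_q(1)$ is self-dual.

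The main obstacle is the boundary behaviour of the truncation: one must check that truncating the top summand does not break the symmetry between $M\ut\Delta_q(1)$ and $N\ut\Delta_q(1)$. This is precisely where self-duality of $\Delta_q(1)$ and, more importantly, the fact that the discarded object $T_q(\ell-1)$ has zero quantum dimension (so that $\Hom_{\U_q}(-,T_q(\ell-1))$-contributions are genuinely absent from $\cC_{\rm red}$ on both sides) do the work; in the explicit approach it is just the observation that the predicate ``$t=s\pm1$ and $s,t\in[0,\ell-2]$'' is symmetric in $s$ and $t$, so no asymmetry can arise. Everything else is the routine $\fsl_2$ Clebsch--Gordan computation together with the definition \eqref{eq:ip} of the pairing.
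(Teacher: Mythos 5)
Your proof follows the paper's own route: both sides of \eqref{eq:tens} are additive in $M$ and $N$, so one reduces to $M=\Delta_q(s)$, $N=\Delta_q(t)$ with $s,t\in\{0,\dots,\ell-2\}$, applies the reduced Clebsch--Gordan formula \eqref{eq:redcg}, and checks the resulting identity, which amounts to the symmetry in $s,t$ of the condition ``$|s-t|=1$ and $s,t\in[0,\ell-2]$.'' Your alternative structural argument via rigidity and self-duality of $\Delta_q(1)$ is sound and more conceptual, but the computational version you give first is exactly what the paper invokes when it says the assertion ``is now easily verified.''
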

\begin{proof}
Since both sides of \eqref{eq:tens} are linear in $M$ and $N$, it suffices to take $M=T_q(s)=\Delta_q(s)$ and $N=\Delta_q(t)$
for $s,t\in\{0,1,2,\dots,\ell-2\}$. The ``reduced Clebsch-Gordan formula'' asserts that for $m\in\{0,1,2,\dots,\ell-2\}$,
\be\label{eq:redcg}
\Delta_q(m)\ut\Delta_q(1)\simeq
\begin{cases}
\Delta_q(m-1)\oplus \Delta_q(m+1)\text{ if }m\neq 0\text{ or }\ell-2\\
\Delta_q(1) \text { if } m=0\\
\Delta_q(\ell-3)\text{ if }m=\ell-2.\\
\end{cases}
\ee
The assertion is now easily verified.
\end{proof}

The following result provides an explicit description of the operation $\ut$ in the category $\cC_{\rm red}$.

\begin{proposition}\label{prop:tcg}
Suppose $s,t\in\Z$ are such that $0\leq s,t\leq \ell-2$. 
Then
\be\label{eq:tcg}
\Delta_q(s)\ut\Delta_q(t)\cong\Delta_q(|s-t|)\oplus \Delta_q(|s-t|+2)\oplus\cdots\oplus \Delta_q(m),
\ee
where $m=m(s,t)=\min\{s+t,2(\ell-2)-(s+t)\}$.
\end{proposition}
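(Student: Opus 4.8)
The plan is to prove the reduced Clebsch--Gordan formula \eqref{eq:tcg} by induction on $\min\{s,t\}$, using the single-step formula \eqref{eq:redcg} as the base of the recursion and the associativity of $\ut$ (Proposition \ref{prop:tens}) to drive the induction. Without loss of generality assume $s\leq t$. The base case $s=0$ is trivial, since $\Delta_q(0)$ is the unit object and $\Delta_q(0)\ut\Delta_q(t)\cong\Delta_q(t)$, which matches the right-hand side of \eqref{eq:tcg} with $m(0,t)=\min\{t,2(\ell-2)-t\}=t$ (note $t\leq\ell-2$). The case $s=1$ is exactly \eqref{eq:redcg} (with $m$ and $t$ interchanged): if $t\neq 0,\ell-2$ we get $\Delta_q(t-1)\oplus\Delta_q(t+1)$, matching $m(1,t)=\min\{t+1,2(\ell-2)-(t+1)\}$; one checks the two boundary cases $t=0$ and $t=\ell-2$ separately against the formula.

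For the inductive step, suppose \eqref{eq:tcg} holds for all pairs with first argument $<s$ (and arbitrary second argument in $\{0,\dots,\ell-2\}$). Write $\Delta_q(s)\ut\Delta_q(t)$ and compute instead $\bigl(\Delta_q(s-1)\ut\Delta_q(1)\bigr)\ut\Delta_q(t)$, which by \eqref{eq:redcg} (assuming first $2\leq s\leq\ell-2$, so $s-1\neq 0$ and we must separately handle $s-1=\ell-2$, i.e.\ $s=\ell-1$, but $s\leq\ell-2$ rules that out) equals $\bigl(\Delta_q(s-2)\oplus\Delta_q(s)\bigr)\ut\Delta_q(t)$. By associativity this is also $\Delta_q(s-1)\ut\bigl(\Delta_q(1)\ut\Delta_q(t)\bigr)=\Delta_q(s-1)\ut\bigl(\Delta_q(t-1)\oplus\Delta_q(t+1)\bigr)$ when $t\neq 0,\ell-2$, and here one must be careful that $t+1$ may equal $\ell-1$, in which case $\Delta_q(\ell-1)$ does not lie in $\cC_{\rm red}$; but $T_q(\ell-1)=\Delta_q(\ell-1)$ has zero quantum dimension, so it is killed in the reduced tensor product, i.e.\ $\Delta_q(s-1)\ut\Delta_q(\ell-1)=0$ in $\cC_{\rm red}$. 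Thus I obtain two expressions
\begin{equation*}
\bigl(\Delta_q(s-2)\ut\Delta_q(t)\bigr)\oplus\bigl(\Delta_q(s)\ut\Delta_q(t)\bigr)\cong\bigl(\Delta_q(s-1)\ut\Delta_q(t-1)\bigr)\oplus\bigl(\Delta_q(s-1)\ut\Delta_q(t+1)\bigr),
\end{equation*}
all three terms other than $\Delta_q(s)\ut\Delta_q(t)$ being known by the induction hypothesis. Since the Grothendieck group of $\cC_{\rm red}$ is free on the classes $[\Delta_q(0)],\dots,[\Delta_q(\ell-2)]$ and cancellation holds, solving for $\Delta_q(s)\ut\Delta_q(t)$ gives a formula which, after a telescoping bookkeeping of the ranges of the arithmetic progressions, reduces to the claimed interval $|s-t|,|s-t|+2,\dots,m(s,t)$.

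The main obstacle, and the part requiring genuine care, is the boundary bookkeeping: one must track which summands $\Delta_q(j)$ with $j\geq\ell-1$ appear on the right-hand sides of the intermediate computations and discard exactly those with $j\equiv\ell-1\pmod\ell$ (quantum dimension zero), while also handling the degenerate cases $t=\ell-2$, $t-1=\ell-2$, etc., where \eqref{eq:redcg} has a different form. A clean way to organise this is to observe that the right-hand side of \eqref{eq:tcg} is the ``folded'' truncation of the classical $\fsl_2$ Clebsch--Gordan rule $\Delta(s)\ot\Delta(t)=\bigoplus_{j=|s-t|,\,\text{step }2}^{s+t}\Delta(j)$: the formula $m(s,t)=\min\{s+t,2(\ell-2)-(s+t)\}$ is precisely what results from the reflection $j\mapsto 2(\ell-2)-j=2(\ell-1)-2-j$ about $\ell-1$ acting on weights, with the fixed hyperplane $j=\ell-1$ giving the discarded zero-quantum-dimension module. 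I would therefore first record this reflection principle as the conceptual content, then verify it against the inductive relation above, so that the case analysis becomes a routine check rather than an ad hoc enumeration.
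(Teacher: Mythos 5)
Your proof is correct and is driven by the same underlying idea as the paper's, namely induction on $\min\{s,t\}$ via tensoring with $\Delta_q(1)$, but the computational mechanics differ in a way worth noting. The paper fixes $s$, decomposes $\Delta_q(t)=\Delta_q(t-1)\ut\Delta_q(1)\ominus\Delta_q(t-2)$ in the Grothendieck ring, pairs with $\Delta_q(r)$, and then invokes the self-adjunction of $-\ut\Delta_q(1)$ (Lemma \ref{lem:tens}) to push $\Delta_q(1)$ across the pairing; it also treats the extreme case $s=\ell-2$ as a separate preliminary induction. You instead compute $\bigl(\Delta_q(s-1)\ut\Delta_q(1)\bigr)\ut\Delta_q(t)$ two ways by pure associativity, obtaining the relation
\[
\bigl[\Delta_q(s-2)\ut\Delta_q(t)\bigr]+\bigl[\Delta_q(s)\ut\Delta_q(t)\bigr]=\bigl[\Delta_q(s-1)\ut\Delta_q(t-1)\bigr]+\bigl[\Delta_q(s-1)\ut\Delta_q(t+1)\bigr]
\]
in the free abelian group $K_0(\cC_{\rm red})$, and solving by cancellation. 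This avoids Lemma \ref{lem:tens} altogether and folds the $t=\ell-2$ boundary into the main recursion by noting that $\Delta_q(\ell-1)$ is negligible (a fact you invoke but do not prove; it follows from the negligible tilting modules forming a tensor ideal, which is essentially Andersen's result underlying Proposition \ref{prop:tens}). The ``reflection about $\ell-1$'' framing you give is a good conceptual summary of why the truncation $m(s,t)=\min\{s+t,\,2(\ell-2)-(s+t)\}$ appears. The one substantive shortcoming is that you explicitly defer the boundary bookkeeping to ``a routine check'': to make this a complete proof you would need to carry out the telescoping cancellation (one sees that $m(s-1,t-1)=m(s-2,t)$, so the first and third sums cancel except for the lowest term $[\Delta_q(t-s)]$, and that $m(s-1,t+1)=m(s,t)$ when $t\le\ell-3$, with the $t=\ell-2$ case handled by dropping the $\Delta_q(t+1)$ term). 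The paper does spell this out via its three-case analysis, which is the part you would still have to supply.
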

\begin{proof} Note first that by the commutativity of $\ut$, it suffices to prove \eqref{eq:tcg} for $s,t$ 
such that $0\leq t\leq s\leq \ell-2$.  Further, observe that \eqref{eq:tcg} holds for $t=0,1$. The case $t=0$ is trivial, while 
if $t=1(\leq s)$, we have 
$$\dq(s)\ut\dq(1)\cong\begin{cases}\dq(s-1)\oplus\dq(s+1)\text{ if }s<\ell-2\\
\dq(s-1)\text{ if } s=\ell-2,\\
\end{cases}
$$
which is precisely the assertion \eqref{eq:tcg} in this case. We next show that \eqref{eq:tcg} holds when $s=\ell-2$.
This assertion amounts to
\be\label{eq:top}
\dq(\ell-2)\ut\dq(t)\cong\dq(\ell-2-t)\text{ for all }t.
\ee
We prove \eqref{eq:top} by induction on $t$; the statement holds for $t=0,1$, as already observed. For $1<t\leq\ell-2$,
we have $\dq(t-1)\ut\dq(1)\cong\dq(t)\oplus\dq(t-2)$, whence by induction,
$\dq(\ell-2)\ut\dq(t-1)\ut\dq(1)\cong\dq(\ell-2)\ut\dq(t)
\oplus\dq(\ell-2)\ut\dq(\ell-2-(t-2))$. But again by induction, the left side is equal to $\dq(\ell-2-(t-1))\ut\dq(1)\cong
\dq(\ell-2-(t-2))\oplus \dq(\ell-2-t)$, which proves \eqref{eq:top}.

We may therefore now assume that $\ell-3\geq s\geq t\geq 2$, and proceed by induction on $t$. Using \eqref{eq:redcg},
and \eqref{eq:tens}, we see easily that for any $r$ with $0\leq r\leq\ell-2$, we have
\be\label{eq:r}
\begin{aligned}
(\dq(s)&\ut\dq(t),\dq(r))_{\U_q}=\\
&(\dq(s)\ut\dq(t-1),\dq(1)\ut\dq(r))_{\U_q}-(\dq(s)\ut\dq(t-2),\dq(r))_{\U_q}.\\
\end{aligned}
\ee
We shall show, using \eqref{eq:r}, that for $0\leq r\leq\ell-2$, 

\begin{assertion}\label{x}
\text{The multiplicity of $\dq(r)$ in both sides of \eqref{eq:tcg} is the same.}
\end{assertion}

If $r=0$, the right side of \eqref{eq:r} is zero unless $s-t+1=1$, i.e. $s=t$, in which case it is $1$.
This proves the assertion for $r=0$. If $r=\ell-2$, the first summand on the right side of \eqref{eq:r} is
$(\dq(s)\ut\dq(t-1),\dq(\ell-3))_{\U_q}$, which is $1$ if $s+t-1=\ell-1$ or $\ell-3$ and zero otherwise.
If $s+t-1=\ell-1$, then the second summand on the right side of \eqref{eq:r} is $1$, whence the right side is
zero unless $s+t=\ell-2$, in which case it is $1$. This proves Assertion \ref{x} when $r=\ell-2$.

We may therefore assume that $0<r<\ell-2$, so that \eqref{eq:r} may be written as follows.
\be\label{eq:r1}
\begin{aligned}
(\dq(s)&\ut\dq(t),\dq(r))_{\U_q}=(\dq(s)\ut\dq(t-1),\dq(r-1))_{\U_q}\\
&+(\dq(s)\ut\dq(t-1),\dq(r+1))_{\U_q}-(\dq(s)\ut\dq(t-2),\dq(r))_{\U_q}.\\
\end{aligned}
\ee
Now by induction, we have
\be\label{eq:ind}
\begin{aligned}
\dq(s)\ut\dq(t-1)\cong& \dq(s-t+1)\oplus\dq(s-t+3)\oplus\cdots\oplus\dq(m(s,t-1))\\
\text{and }\dq(s)\ut\dq(t-2)\cong& \dq(s-t+2)\oplus\dq(s-t+4)\oplus\cdots\oplus\dq(m(s,t-2)).\\
\end{aligned}
\ee

We consider three cases.

\noindent{\bf Case 1:} $s+t-1>\ell-2$. In this case it is clear that $m(s,t-1)=m(s,t)+1$ and $m(s,t+2)=m(s,t)+2$.
Hence in equation \eqref{eq:r1}, the last two summands cancel, and we are left with 
$(\dq(s)\ut\dq(t),\dq(r))_{\U_q}=(\dq(s)\ut\dq(t-1),\dq(r-1))_{\U_q}$ (for $0<r<\ell-2$).
Bearing in mind that $m(s,t-1)=m(s,t)+1$, this completes the proof of Assertion \ref{x} in this case.

\noindent{\bf Case 2:} $s+t-1\leq\ell-2$ and $s+t\neq \ell-1$. When $s+t-1\leq\ell-2$, a short calculation shows that 
$m(s,t-1)=m(s,t)-1$ and $m(s,t-2)=m(s,t)-2$, except in the single case when $s+t=\ell-1$, with which we shall deal
separately. We therefore assume for the moment that $s+t\neq \ell-1$, and using \eqref{eq:ind},
 evaluate each of the three terms in the right side of
\eqref{eq:r1}. The first term is $1$ for $r$ (of the correct parity) such that $s-t+2\leq r\leq m(s,t)$, and zero otherwise.
The second term is $1$ for $r$ (of the correct parity) such that $s-t\leq r\leq m(s,t)-2$, and zero otherwise, while
the third term is $-1$ for $r$ (of the correct parity) such that $s-t+2\leq r\leq m(s,t)-2$, and zero otherwise. 
This proves Assertion \ref{x} in this case.

\noindent{\bf Case 3:} We consider finally the remaining case $s+t=\ell-1$. In this case we have $m(s,t)=\ell-3=m(s,t-2)$,
and $m(s,t-1)=\ell-2$. Using this we again evaluate the three terms on the right side of \eqref{eq:r1}, recalling that $r\leq\ell-3$.
The first term is $1$ for $r$ (of the correct parity) such that $s-t+2\leq r\leq \ell-3$, and zero otherwise.
The second term is $1$ for $r$ (of the correct parity) such that $s-t\leq r\leq \ell-3$, and zero otherwise
while
the third term is $-1$ for $r$ (of the correct parity) such that $s-t+2\leq r\leq \ell-3$, and zero otherwise.

This completes the proof of Proposition \ref{prop:tcg}.
\end{proof}

\subsection{Connection with the algebra $Q_n(\ell)$} We start with the following observation.
\begin{proposition}\label{prop:endqn}
We have 
\[
\End_{\U_q}(\Delta_q(1)^{\ut^n})\cong Q_n(\ell).
\]
\end{proposition}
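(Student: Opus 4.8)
The plan is to build a chain of algebra isomorphisms
\[
\End_{\U_q}(\Delta_q(1)^{\ut n})\;\cong\;\End_{\U_q}\big(\Delta_q(1)^{\ot n}\big)\big/\mathfrak{r}\;\cong\;\TL_n(q)/R_n(q)\;=\;Q_n(\ell),
\]
where $\mathfrak{r}$ is the ideal of $\End_{\U_q}(\Delta_q(1)^{\ot n})$ consisting of morphisms that factor through tilting summands $T_q(m)$ with $m\geq\ell-1$. The starting point is the classical fact (the ``quantum Schur--Weyl'' statement for $\fsl_2$, as in \cite{GLA,ALZ}) that $\TL_n(q)\cong\End_{\U_q}(\Delta_q(1)^{\ot n})$, the generator $f_i$ going to $\id^{\ot(i-1)}\ot\,\check R\,\ot\id^{\ot(n-i-1)}$ for the appropriate cup-cap morphism. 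Under the tilting decomposition \eqref{eq:tdec}, $\End_{\U_q}(\Delta_q(1)^{\ot n})\cong\bigoplus_t \mathrm{Mat}_{l_t(n)}(\C)$, so the algebra is semisimple and its ``radical'' in the fusion sense is precisely the span of those matrix blocks indexed by $t$ with $t\geq\ell-1$; this is a two-sided ideal because $\ut$ is just $\ot$ followed by projection onto the negligible part, and negligible morphisms form a tensor ideal (Andersen \cite{A}). Quotienting by it leaves exactly $\bigoplus_{t=0}^{\ell-2}\mathrm{Mat}_{l_t(n)}(\C)$, which by definition of the reduced tensor product is $\End_{\U_q}(\Delta_q(1)^{\ut n})$.

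The second link in the chain is to identify the ideal $\mathfrak r$ on the Temperley--Lieb side. Under the isomorphism $\TL_n(q)\cong\End_{\U_q}(\Delta_q(1)^{\ot n})$, a morphism factors through a tilting module of highest weight $\geq\ell-1$ if and only if it factors through the image of $E_{\ell-1}$: indeed, the Jones--Wenzl idempotent $E_{\ell-1}\in\TL_{\ell-1}(q)$ corresponds precisely to the projection $\Delta_q(1)^{\ot(\ell-1)}\twoheadrightarrow T_q(\ell-1)\hookrightarrow\Delta_q(1)^{\ot(\ell-1)}$ onto the top tilting summand (this is how $E_{\ell-1}$ is characterised: $f_iE_{\ell-1}=E_{\ell-1}f_i=0$, i.e. it kills all through-strand-reducing morphisms, matching the harmonic projector onto highest weight $\ell-1$). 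Hence the ideal generated by $E_{\ell-1}\ot I^{\ot(n-\ell+1)}$ in $\TL_n(q)$, which is $R_n(q)$ by Proposition~\ref{prop:rad}, maps onto the two-sided ideal of morphisms factoring through $T_q(m)$, $m\geq\ell-1$ --- one needs that \emph{every} such summand is ``dominated'' by $T_q(\ell-1)$ in the sense that $T_q(m)$ for $m\geq\ell-1$ appears as a summand of $T_q(\ell-1)\ot(\text{something})$, which follows from the structure \eqref{eq:tilting} of the tilting modules together with the fact that $T_q(\ell-1)=\Delta_q(\ell-1)$ is negligible and negligibles form a tensor ideal. Thus $R_n(q)$ is exactly the preimage of $\mathfrak r$, and the induced map $Q_n(\ell)=\TL_n(q)/R_n(q)\to \End_{\U_q}(\Delta_q(1)^{\ut n})$ is a well-defined algebra homomorphism.

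Finally one checks this induced map is an isomorphism, which is now a dimension count: by Theorem~\ref{thm:zero} the simple $Q_n$-modules are $L_t$, $0\leq t\leq\ell-2$, so $\dim Q_n(\ell)=\sum_{t=0}^{\ell-2}l_t(n)^2$, while $\End_{\U_q}(\Delta_q(1)^{\ut n})=\bigoplus_{t=0}^{\ell-2}\mathrm{Mat}_{l_t(n)}(\C)$ has the same dimension by \eqref{eq:tdec}; surjectivity of the map (clear, since the image contains the images of all the $f_i$, which generate) then forces bijectivity. Alternatively, and perhaps more cleanly, one avoids dimension counting entirely by observing that the composite $\TL_n(q)\to\End_{\U_q}(\Delta_q(1)^{\ot n})\to\End_{\U_q}(\Delta_q(1)^{\ut n})$ is surjective with kernel exactly $R_n(q)$ (the kernel computation being the content of the previous paragraph), so the factorisation through $Q_n(\ell)$ is automatically an isomorphism.

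\textbf{The main obstacle} will be the precise identification, on the nose, of the ideal generated by $E_{\ell-1}$ inside $\TL_n(q)$ with the ideal of ``negligible'' (fusion-trivial) endomorphisms of $\Delta_q(1)^{\ot n}$. The inclusion $R_n(q)\subseteq\ker(\TL_n\to\End_{\U_q}(\Delta_q(1)^{\ut n}))$ is easy --- $E_{\ell-1}$ projects onto $T_q(\ell-1)$, which is killed in the reduced category. The reverse inclusion requires knowing that there are no \emph{other} ways a morphism can become zero in $\cC_{\rm red}$, i.e. that the negligible ideal in $\End(\Delta_q(1)^{\ot n})$ is generated by the single idempotent $E_{\ell-1}$ placed in all positions; this is essentially Jones' theorem \cite[Thm 2.1]{JR} / Proposition~\ref{prop:rad} transported through quantum Schur--Weyl duality, and the cleanest route is to match the semisimple quotients directly via their simple modules using Theorem~\ref{thm:zero} rather than re-deriving the ideal structure.
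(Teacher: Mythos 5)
Your overall strategy---realise $Q_n(\ell)$ as the quotient of $\End_{\U_q}(\Delta_q(1)^{\ot n})\cong\TL_n(q)$ by the negligible ideal, and identify that ideal with $R_n(q)$ via Jones' theorem---is a genuinely different and more ambitious route than the paper's. The paper simply observes that $\Delta_q(1)^{\ut n}\cong\bigoplus_{t=0}^{\ell-2}l_t(n)\Delta_q(t)$ with the $\Delta_q(t)$ simple, so $\End_{\U_q}(\Delta_q(1)^{\ut n})\cong\bigoplus_t\mathrm{Mat}_{l_t(n)}(\C)$ by Schur's lemma, and that by Theorem~\ref{thm:zero} the semisimple algebra $Q_n(\ell)$ has exactly the same Wedderburn block sizes; the two algebras are therefore abstractly isomorphic and the proof stops there. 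Your final ``alternatively, and perhaps more cleanly'' paragraph is essentially this argument, and it is the one the paper actually uses.

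However, the body of your proposal contains a real error. You assert that $\End_{\U_q}(\Delta_q(1)^{\ot n})\cong\bigoplus_t\mathrm{Mat}_{l_t(n)}(\C)$ and conclude the algebra is semisimple. It is not: by quantum Schur--Weyl this endomorphism algebra is isomorphic to $\TL_n(q)$, which is non-semisimple for every $n\geq\ell$ when $|q^2|=\ell$---this is the central phenomenon of the paper. The slip comes from reading \eqref{eq:tdec} as a decomposition into simples: the $T_q(t)$ are indecomposable tiltings, and for $t\geq\ell$ (with $t\not\equiv -1\bmod\ell$) they are \emph{not} simple, so Schur's lemma does not apply, $\Hom_{\U_q}(T_q(s),T_q(t))$ can be non-zero for $s\neq t$, and $\End(\bigoplus_t l_t(n)T_q(t))$ is not a sum of matrix blocks. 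Consequently the negligible ideal $\mathfrak r$ is not ``the span of the matrix blocks indexed by $t\geq\ell-1$''; it is the radical of the quantum trace form on $\End_{\U_q}(\Delta_q(1)^{\ot n})$, which \emph{strictly contains} the Jacobson radical of that algebra (this is precisely why $Q_n(\ell)$ is smaller than the maximal semisimple quotient, cf.\ Remark~\ref{rem:ssrad}). To repair your explicit-isomorphism argument you would need two inputs: (i) the Jones trace $\tr_n$ of \eqref{eq:jf} agrees, under Schur--Weyl, with the normalised quantum trace on $\End_{\U_q}(\Delta_q(1)^{\ot n})$, so that the negligible ideal pulls back to $\Rad(\tr_n)$; and (ii) Proposition~\ref{prop:rad} identifying $\Rad(\tr_n)=R_n(q)$. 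With those supplied your route goes through and produces the map explicitly, but it is carrying strictly more weight than the statement requires; the paper's Wedderburn-count argument is correct, shorter, and avoids all of these pitfalls.
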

\begin{proof}
It follows from the definition and from \eqref{eq:tdec} that
\be\label{eq:tred}
\Delta_q(1)^{\ut n}\simeq \bigoplus_{t=0}^{\ell-2}l_t(n)  \Delta_q(t).
\ee

Since the $\Delta_q(t)$ are simple for $0\leq t\leq \ell-2$, it follows that $\End_{\U_q}(\Delta_q(1)^{\ut^n})$
is the direct sum of matrix algebras of degree $l_t(n)$, for $t$ such that $0\leq t\leq\ell-2$ and $t\equiv n\text{(mod }2)$.
But this latter set of integers is precisely the set of degrees of the simple modules for the semisimple algebra $Q_n(\ell)$.
The result follows.
\end{proof}

This result may be used to deduce the dimension of $Q_n(\ell)$.

\begin{corollary}\label{cor:dimq}(see \cite[Thm. 2.9.8]{GHJ})
Define $Q^{(\ell)}(x):=\sum_{n=0}^\infty\dim(Q_{n+1}(\ell))x^n$.
Then 
\be\label{eq:dimq}
Q^{(\ell)}(x)=\frac{p_{\ell-2}(x)}{p_\ell(x)},
\ee
where the polynomials $p_i(x)$ are defined in \eqref{eq:pix}.
\end{corollary}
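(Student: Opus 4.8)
The plan is to compute $\dim(Q_{n+1}(\ell))$ directly from Proposition~\ref{prop:endqn} and the decomposition \eqref{eq:tred}. Since $\End_{\U_q}(\Delta_q(1)^{\ut(n+1)})\cong Q_{n+1}(\ell)$ is a direct sum of matrix algebras of size $l_t(n+1)$ for $0\leq t\leq\ell-2$ with $t\equiv n+1\,(\text{mod }2)$, we have
\[
\dim(Q_{n+1}(\ell))=\sum_{t=0}^{\ell-2}l_t(n+1)^2.
\]
First I would translate this into a statement about generating functions: multiplying by $x^n$ and summing over $n\geq 0$, the right-hand side is the ``diagonal'' extracted from a bivariate series built out of the $L_t^{(\ell)}(x)$ of Theorem~\ref{thm:tcheb}. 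The cleaner route, however, is to avoid squaring altogether by using the $\U_q$-module interpretation one step further.

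The key step is the observation that $\dim\End_{\U_q}(\Delta_q(1)^{\ut(n+1)})=\dim\Hom_{\U_q}(\Delta_q(1)^{\ut(n+1)},\Delta_q(1)^{\ut(n+1)})$, and by adjunction (self-duality of $\Delta_q(1)$ under $\ut$, which holds since $\Delta_q(1)\ut\Delta_q(1)\cong\Delta_q(0)\oplus\Delta_q(2)$ contains the trivial module exactly once), this equals the multiplicity of $\Delta_q(0)$ in $\Delta_q(1)^{\ut(2n+2)}$, i.e. $l_0(2n+2)$. Hence
\[
\dim(Q_{n+1}(\ell))=l_0^{(\ell)}(2n+2)=\text{coefficient of }x^{n+1}\text{ in }L_0^{(\ell)}(x),
\]
so that $Q^{(\ell)}(x)=\sum_{n\geq0}\dim(Q_{n+1}(\ell))x^n=\frac{1}{x}\bigl(L_0^{(\ell)}(x)-l_0^{(\ell)}(0)\bigr)$. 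Now by Theorem~\ref{thm:tcheb} with $t=0$ (so $a=0$, $b=0$), $L_0^{(\ell)}(x)=\frac{p_{\ell-1}(x)}{p_\ell(x)}$, and $l_0^{(\ell)}(0)=\dim L_0(0)=1$. Therefore
\[
Q^{(\ell)}(x)=\frac{1}{x}\left(\frac{p_{\ell-1}(x)}{p_\ell(x)}-1\right)=\frac{p_{\ell-1}(x)-p_\ell(x)}{x\,p_\ell(x)}.
\]
Finally, from the defining recurrence \eqref{eq:pix}, $p_\ell(x)=p_{\ell-1}(x)-x\,p_{\ell-2}(x)$, so $p_{\ell-1}(x)-p_\ell(x)=x\,p_{\ell-2}(x)$, giving $Q^{(\ell)}(x)=\frac{p_{\ell-2}(x)}{p_\ell(x)}$, as claimed.

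The main obstacle is justifying the adjunction step cleanly: one must verify that $\Delta_q(1)$ is self-dual in $\cC_{\rm red}$ under $\ut$ and that the rigidity/Frobenius-reciprocity isomorphism $\Hom(M\ut\Delta_q(1),N)\cong\Hom(M,N\ut\Delta_q(1))$ (Lemma~\ref{lem:tens} is exactly the numerical shadow of this) upgrades to an actual isomorphism of Hom-spaces, then iterate it $n+1$ times to collapse $\End(\Delta_q(1)^{\ut(n+1)})$ to $\Hom(\Delta_q(0),\Delta_q(1)^{\ut(2n+2)})$. If one prefers to sidestep this, an alternative is purely combinatorial: establish $\sum_{t=0}^{\ell-2}l_t(n+1)^2=l_0(2n+2)$ directly from $l_t(n)=\dim L_t(n)$ and the branching rule $l_t(m+1)=l_{t-1}(m)+l_{t+1}(m)$ (Proposition~\ref{prop:red}), which is a standard ``number of closed walks'' identity on the truncated path graph; then appeal to Theorem~\ref{thm:tcheb} and the recurrence \eqref{eq:pix} exactly as above. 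Either way the generating-function endgame with the $p_i(x)$ is the easy part.
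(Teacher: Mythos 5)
Your argument is correct and is essentially the paper's proof: Proposition~\ref{prop:endqn} gives $\dim Q_{n+1}(\ell)=(\Delta_q(1)^{\ut(n+1)},\Delta_q(1)^{\ut(n+1)})_{\U_q}$, repeated use of Lemma~\ref{lem:tens} collapses this to a single $l_t$-value, and Theorem~\ref{thm:tcheb} (equivalently \eqref{eq:dimlq}) finishes. The only difference is cosmetic: the paper applies Lemma~\ref{lem:tens} $n$ times to land on $l_1(2n+1)$, which \eqref{eq:dimlq} gives immediately as $p_{\ell-2}(x)/p_\ell(x)$, whereas you apply it once more to reach $l_0(2n+2)$ and then shift by $x$ and invoke the recurrence for the $p_i$.

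One remark: the ``main obstacle'' you flag---upgrading Lemma~\ref{lem:tens} from a numerical identity to a genuine $\Hom$-space isomorphism via self-duality/rigidity of $\Delta_q(1)$ in $\cC_{\rm red}$---is not actually needed. All you use is the \emph{dimension} of the endomorphism algebra, i.e.\ the pairing $(M,M)_{\U_q}$ of Definition~\ref{def:ip}, and iterating the purely numerical Lemma~\ref{lem:tens} $n+1$ times already yields $(\Delta_q(0),\Delta_q(1)^{\ut(2n+2)})_{\U_q}=l_0(2n+2)$ with no categorical input. The paper's proof works exactly at this numerical level and never invokes rigidity, so the concern dissolves.
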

\begin{proof}
It follows from Proposition \ref{prop:endqn} that in the notation of Definition \ref{def:ip},
$$\dim(Q_{n+1}(\ell))=(\Delta_q(1)^{\ut(n+1)},\Delta_q(1)^{\ut(n+1)})_{\U_q}.$$

But by $n$ applications of Lemma \ref{lem:tens}, we see that
\[
(\Delta_q(1)^{\ut(n+1)},\Delta_q(1)^{\ut(n+1)})_{\U_q}=(\Delta_q(1)^{\ut(2n+1)},\Delta_q(1))_{\U_q}=l_1(2n+1).
\]

Finally, by \eqref{eq:dimlq}, we have $\sum_{n=0}^\infty l_1(2n+1)x^n=\frac{p_{\ell-2}(x)}{p_\ell(x)}$, and the proof is complete.
\end{proof}

\section{Fusion algebras and fusion categories.}
In this section we investigate some structures which are related to the constructions above. We start with a fusion structure on the 
representation rings of the algebras $Q_n(\ell)$. Throughout this section we take $\ell=|q^2|\geq 3$ as fixed, unless otherwise 
specified.
\subsection{Fusion structure on the Jones algebras} Let $Q_n=Q_n(\ell)$ be as above. This is a semisimple algebra, and if we write
$\cR(n):=\{t\in\Z\mid t\equiv n(\text{mod $2$)}\text{ and }0\leq t\leq\min\{n,\ell-2\}\}$, then
\be\label{eq:kq}
K_0(Q_n)\cong \oplus_{t\in\cR(n)}\Z[L_t(n)].
\ee

Define the algebra 
\[
K(Q):=\bigoplus_{n\geq 1}K_0(Q_n),
\]
where multiplication is given by
\be\label{eq:qm}
[L_s(m)]\circ [L_t(n)]:=[\Ind_{Q_m\ot Q_n}^{Q_{m+n}}(L_s(m)\bt L_t(n))].
\ee
\begin{remark}\label{rem:tensor}
Here $Q_m\ot Q_n$ is the subalgebra of $Q_{m+n}$ which is generated by the image of 
$\TL_m(q)\ot\TL_n(q)\subseteq\TL_{m+n}(q)$ under the canonical map $\TL_{m+n}(q)\to Q_{m+n}(\ell)$.
The induced representation $\Ind_{\TL_m\ot\TL_n}^{\TL_{m+n}}(L_s(m)\bt L_t(n))$ may
have summands 
which are not acted upon trivially by $R_{m+n}(q)$. 
To obtain a representation of $Q_{m+n}$, 
{
we consider the submodule of this induced representation of $\TL_{m+n}$
consisting of elements annihilated by $R_{m+n}(q)$. 
}
\end{remark}
 
The multiplication defined above on $K(Q)$ is bilinear, associative and commutative.

\begin{theorem}\label{thm:fus}
We have
\be\label{eq:fus}
[L_s(m)]\circ [L_t(n)]=\sum_{|s-t|\leq r\leq m(s,t)}[L_r(m+n)].
\ee
where $m(s,t)=\min\{s+t,2(\ell-2)-(s+t)\}$, as in Proposition \ref{prop:tcg}.
\end{theorem}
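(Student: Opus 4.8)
The plan is to reduce the fusion rule for the Jones algebras $K(Q)$ directly to the reduced Clebsch–Gordan rule for $\U_q(\fsl_2)$ established in \propref{prop:tcg}, via the identification $Q_n(\ell)\cong\End_{\U_q}(\Delta_q(1)^{\ut n})$ of \propref{prop:endqn} and the decomposition \eqref{eq:tred}. First I would set up the dictionary: under \eqref{eq:tred} the simple $Q_n$-module $L_t(n)$ (for $t\in\cR(n)$) corresponds to the isotypic component $\Delta_q(t)$ inside $\Delta_q(1)^{\ut n}$, so that $K_0(Q_n)$ is naturally identified with the span of $[\Delta_q(t)]$, $t\in\cR(n)$, inside the Grothendieck group of $\cC_{\rm red}$. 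Under this identification the product $[L_s(m)]\circ[L_t(n)]$ should correspond to $[\Delta_q(s)]\cdot[\Delta_q(t)]$ computed with the truncated tensor product $\ut$; once that correspondence is justified, \eqref{eq:fus} is exactly the content of \eqref{eq:tcg}.

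The substantive step is to verify that the induction–then–restrict operation \eqref{eq:qm} (with the subtlety spelled out in \remref{rem:tensor}, where one passes to the submodule annihilated by $R_{m+n}$) does correspond, under the $\U_q$-dictionary, to the reduced tensor product. I would argue as follows. The algebra $Q_m\ot Q_n\subseteq Q_{m+n}$ acts on $\Delta_q(1)^{\ut(m+n)}\cong\Delta_q(1)^{\ut m}\ut\Delta_q(1)^{\ut n}$, and by double centraliser / Schur–Weyl considerations for the semisimple tensor category $\cC_{\rm red}$, the $Q_{m+n}$-module $\Ind_{Q_m\ot Q_n}^{Q_{m+n}}(L_s(m)\bt L_t(n))$ — more precisely the $R_{m+n}$-annihilated part which makes it a genuine $Q_{m+n}$-module — is the multiplicity space of $\Delta_q(s)\bt\Delta_q(t)$ inside $\Delta_q(1)^{\ut(m+n)}$ regarded through the embedding $\Delta_q(s)\ut\Delta_q(t)\hookrightarrow\Delta_q(1)^{\ut m}\ut\Delta_q(1)^{\ut n}$. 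Hence in $K_0(Q_{m+n})$ one gets $\sum_r [\Delta_q(s)\ut\Delta_q(t):\Delta_q(r)]\,[L_r(m+n)]$, and \eqref{eq:tcg} identifies the multiplicities as $1$ for $|s-t|\le r\le m(s,t)$ of the correct parity and $0$ otherwise. An alternative, purely combinatorial route, which I might prefer to keep the argument self-contained: prove \eqref{eq:fus} by induction on $\min\{s,t\}$ using the branching $L_1(m)\circ(-)$ as a "creation operator", mirroring the induction in the proof of \propref{prop:tcg}; the base cases $t=0$ (trivial, $Q_1=\C$) and $t=1$ reduce to the restriction rule for $\TL_{m+n-1}\subset\TL_{m+n}$ together with \propref{prop:red}.

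The main obstacle I anticipate is precisely the bookkeeping in \remref{rem:tensor}: the ordinary induced $\TL_{m+n}$-module $\Ind_{\TL_m\ot\TL_n}^{\TL_{m+n}}(L_s(m)\bt L_t(n))$ is \emph{not} semisimple and will in general have composition factors $L_r$ with $r\ge\ell-1$ on which $R_{m+n}$ acts nontrivially, so one must argue carefully that passing to the $R_{m+n}$-annihilated submodule both (i) produces a module whose composition factors are exactly the $L_r$ with $r\le\ell-2$ appearing in the $\U_q$ side, and (ii) is compatible with the associativity and commutativity of $\circ$ asserted just before the theorem. Concretely, I would prove that for a $Q_{m+n}$-module $M$ one has $\Hom_{Q_{m+n}}(\,\cdot\,,M)=\Hom_{\TL_{m+n}}(\,\cdot\,,\wt M)$ and that $\Hom_{\TL_{m+n}}(\Ind(\ldots),\wt M)=\Hom_{\TL_m\ot\TL_n}(L_s\bt L_t,\Res M)$ by Frobenius reciprocity, so that the $R_{m+n}$-annihilated part of the induced module is characterised by its Hom's into all $Q_{m+n}$-modules; then \eqref{eq:tred} and \propref{prop:tcg} compute those Hom spaces. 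Once this is in place, the displayed formula \eqref{eq:fus} follows immediately by reading off multiplicities from \eqref{eq:tcg}, and the constraint $r\equiv s+t\equiv m+n\pmod 2$ is automatic.
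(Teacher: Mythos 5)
Your proposal follows the same route as the paper's proof: use Proposition \ref{prop:endqn} to view $\Delta_q(1)^{\ut n}$ as a $\U_q\otimes Q_n$-bimodule, compare the decompositions of $\Delta_q(1)^{\ut(m+n)}$ over $\U_q\otimes(Q_m\ot Q_n)$ and over $\U_q\otimes Q_{m+n}$ via Proposition \ref{prop:tcg} to read off the restriction rule, and then pass to induction by Frobenius reciprocity. The only difference is cosmetic: the paper works with restriction and Frobenius reciprocity entirely at the level of the semisimple algebras $Q_m\ot Q_n\subseteq Q_{m+n}$, so the $R_{m+n}$-annihilator subtlety from Remark \ref{rem:tensor} that you spend a paragraph on never actually arises.
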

\begin{proof}
It follows from Proposition \ref{prop:endqn} that for $m\geq 1$, as $\U_q\ot Q_m$-module,
\[
\dq(1)^{\ut m}\cong\oplus_{s\in\cR(m)}\dq(s)\bt L_s(m).
\]

It follows that as $\U_q\ot(Q_m\ot Q_n)$-module, we have
\be\label{eq:tmod}
\dq(1)^{\ut(m+n)}\cong\bigoplus_{s\in\cR(m),\;t\in\cR(n)}\left(\dq(s)\ut \dq(t) \right)\bt\left(L_s(m)\bt L_t(n)\right).
\ee
But as a module for $\U_q\ot Q_{m+n}$, 
\be\label{eq:tot}
\dq(1)^{\ut(m+n)}\cong\bigoplus_{r\in\cR(m+n)}\dq(r)\bt L_r(m+n).
\ee
Moreover by Proposition \ref{prop:tcg}, we may expand \eqref{eq:tmod} as follows.
\[
\left(\dq(s)\ut \dq(t) \right)\bt\left(L_s(m)\bt L_t(n)\right)\cong\bigoplus_{r\in\cR(m+n), |s-t|\leq r\leq m(s,t)}
\dq(r)\bt\left(L_s(m)\bt L_t(n)\right).
\]

Comparing this last equation with \eqref{eq:tot}, we see that given $r\in\cR(m+n)$, we have
\be\label{eq:res}
\Res^{Q_{m+n}}_{Q_m\ot Q_n}(L_r(m+n))\cong\bigoplus_{|s-t|\leq r\leq m(s,t)}\left(L_s(m)\bt L_t(n)\right).
\ee

But by Frobenius reciprocity, the multiplicity of  $L_s(m)\bt L_t(n)$ in $\Res^{Q_{m+n}}_{Q_m\ot Q_n}(L_r(m+n))$ is
equal to that of  $L_r(m+n)$ in $\Ind^{Q_{m+n}}_{Q_m\ot Q_n}\left(L_s(m)\bt L_t(n)\right)$. It follows that \eqref{eq:res}
is equivalent to:
\be\label{eq:ind}
\Ind^{Q_{m+n}}_{Q_m\ot Q_n}\left(L_s(m)\bt L_t(n)\right)\cong\bigoplus_{r:|s-t|\leq r\leq m(s,t)}L_r(m+n),
\ee
which is the required statement.
\end{proof}

\subsection{Some speculation about connections with the Virasoro algebra}\label{ss:spec} 
We conclude with some speculations on possible connections of our results with 
Virasoro algebras. Recall that the Virasoro algebra $\cL=\oplus_{i\in\Z}\C L_i\oplus \C C$ has irreducible highest weight modules
$L(c,h)$ {with highest weight $(c,h)$}, where $c,h(\in\C)$ are respectively the central charge and the eigenvalue of $L_0$. 
It was conjectured by Friedan, 
Qiu and Schenker \cite{FQS1} that $L(c,h)$ is unitarisable if and only if either
{
\begin{enumerate}
\item $c\geq 1$ and $h\geq 0$, or
\item there exist integers $m\geq 2${, $r$} and $s$ with $0<r<m$ and $0<s<m+1$ such that
\[
c=c_m:=1-\frac{6}{m(m+1)} \text{  and  }h=h_{r,s}:=\frac{\left((m+1)r-ms\right)^2-1}{4m(m+1)}.
\]
\end{enumerate}
As $h_{r,s}=h_{m-r,m+1-s}$, it is suffices to take, $1\leq s \leq r<m$ in the latter case.}
The ``if'' part of this statement is proved by Goddard, Kent and Olive \cite{GKO} and the ``only if '' part is proved by  Langlands \cite{L}.
 
This result bears a superficial resemblance to Jones' result on the range of values of the index of a subfactor
{as} was mentioned in the preamble. Thus it might be expected that case (2) is somehow connected
with our algebras $Q_n(\ell)$ for $\ell=3,4,5,\dots$.

Further, there are several instances in the literature (see, e.g. {\cite{GS, KS, N}}) which hint at a connection between $Q_n(\ell)$
and the minimal unitary series of $\cL$ with central charge $c_\ell$. Our work may provide some further
evidence along those lines.

For $\ell=3$, $c=0$, and there is just one irreducible representation, viz. the trivial one. This is `consistent' with $Q_n(3)={\C}$.
For $\ell=4$, $c=\frac{1}{2}$. This case is the Ising model, or equivalently, the $2$-state Potts model, as we have already observed. 

{ Now the abelian groups $K_0(Q_{2n})$ $n=1,2,3,\dots$ form an inverse system, as do the $K_0(Q_{2n+1})$, 
via the maps $[L_t(n+2)]\mapsto
\begin{cases}
[L_t(n)]\text{ if }n-t\in 2\Z_{\geq 0}\\
0\text{ otherwise}\\
\end{cases}$.

Define the abelian groups $K(Q_{\rm even}(\ell)):={\lim\limits_\leftarrow}(K( Q_{2n}(\ell)))$ and
$K(Q_{\rm odd}(\ell)):={\lim\limits_\leftarrow}(K( Q_{2n+1}(\ell)))$. Then 
$K(Q_\infty):=K(Q_{\rm even}(\ell))\oplus K(Q_{\rm odd}(\ell))$ has a $\Z$-basis which may be written 
$\{[L_t]\mid t=0,1,2,\dots,\ell-2\}$. Define a multiplication on $K(Q_\infty)$ by 
\[ [L_s] \circ [L_t]=\sum_{\substack{r \equiv s+t (2) \\ \vert s-t\vert \leq r \leq m(s,t)}}[L_r] .\]

By the usual properties of inverse limits, we have maps $\tau_n:K(Q_\infty)\lr K_0(Q_n(\ell))$, given by 
\[
\tau_n([L_t])=
\begin{cases}
[L_t(n)]\text{ if }n-t\in 2\Z_{\geq 0}\\
0\text{ otherwise.}
\end{cases}
\]

Theorem \ref{thm:fus} implies that the ring 
$K(Q_\infty)$ is a `stable limit' or `completion' of the Grothendieck ring 
$K(Q)(=\oplus_{n=1}^\infty K_0(Q_n(\ell)))$ in the sense that
for all $m,n,s$ and $t$,
\be
\tau_m([L_s])\circ\tau_n([L_t])=\tau_{m+n}([L_s]\circ[L_t]).
\ee
Moreover the ring $K(Q_\infty)$ is isomorphic \cite[(4.6), p.369]{V} to the fusion ring of $\widehat{\mathfrak{sl}}_2$ 
at level $\ell-2$, which in turn  is isomorphic to the subring of the fusion algebra of $\cL$ with central 
charge $c_{\ell-1}$ generated by $[L(c_{l-1},h_{1,s})]$ ($1\leq s\leq l-1$) (cf. \cite[\S 9.3]{IK}).  
}


We hope to return to this theme in a future work.


\begin{thebibliography}{9999}
\bibitem{A}  Henning Haahr Andersen,  ``Tensor products of quantized tilting modules'',
{\sl Comm. Math. Phys. \bf 149} (1992), no. 1, 149--159.

\bibitem{ALZ} Henning H.  Andersen,  Gustav I. Lehrer, and Ruibin Zhang,  
``Cellularity of certain quantum endomorphism algebras'', {\sl Pacific J. Math. \bf 279} (2015), no. 1--2, 11--35.

\bibitem{CE} Connes, Alain and Evans, David E.,
``Embedding of $U(1)$-current algebras in noncommutative algebras of classical statistical mechanics'',
{\sl Comm. Math. Phys. \bf 121} (1989), no. 3, 507--525. 

\bibitem{CR}  Curtis, Charles W. and Reiner, Irving,
`` Representation theory of finite groups and associative algebras'', 
{\sl Pure and Applied Mathematics, \bf Vol. XI} Interscience Publishers, a division of John Wiley \& Sons, New York-London 
1962 xiv$+$
685 pp.

\bibitem{FQS1} Friedan D., Qiu D. and Shenker S., ``Conformal Invariance, Unitarity 
and Critical Exponents in Two Dimensions'', {\sl Phys. Rev. Lett. \bf 52}
(1984), 1575--1578.


\bibitem{GS} Gainutdinov A. and Saleur H., ``Fusion and braiding in finite and affine
Temperley-Lie categories'', preprint, arXiv:1606.04530v1

\bibitem{DG}  Garling, D. J. H.,
``Clifford algebras: an introduction'', {\sl London Mathematical Society Student Texts, \bf 78} 
Cambridge University Press, Cambridge, 2011.

\bibitem{GKO} Goddard P, Kent A. and Olive D.,
{ ``Unitary representations
of the Virasoro and uper-Virasoro Algebras'',
{\sl Commun. Math. Phys. \bf 103} (1986), 105--119.
}

\bibitem{GHJ}  Goodman, Frederick M.,  de la Harpe, Pierre and Jones, Vaughan F. R.
`` Coxeter graphs and towers of algebras''
{\sl Mathematical Sciences Research Institute Publications, \bf 14} Springer-Verlag, New York, 1989. x+288 pp. ISBN: 0--387--96979--9

\bibitem{GL} Graham, J. J. and Lehrer, G. I. ``Cellular algebras'', {\em Invent. Math. \bf 123} (1996), no. 1, 1--34.

\bibitem{GLA}  Graham, J. J. and Lehrer, G. I., `` The representation theory of affine Temperley-Lieb algebras'',
{\sl Enseign. Math. (2) {\bf 44}} (1998), no. 3-4, 173--218.






{
\bibitem{IK}
Iohara K. and Koga Y.,
\textit{Representation Theory of the Virasoro Algebra},
Springer Monograph in Math., Springer, 2011. }

\bibitem{J2} Jones, V. F. R.,  ``Index for subfactors'', {\sl Invent. Math. \bf 72} (1983), no. 1, 1--25. 

\bibitem{J1} Jones, V. F. R.,  ``Hecke algebra representations of braid groups and link polynomials'',
{\sl Ann. of Math. (2) \bf 126} (1987), no. 2, 335--388.

\bibitem{J3} Jones, Vaughan F. R., ``A polynomial invariant for knots via von Neumann algebras'',
{\sl Bull. Amer. Math. Soc. (N.S.) \bf 12} (1985), no. 1, 103--111.

\bibitem{JR}  Jones, Vaughan F. R. and Reznikoff, Sarah A., ``Hilbert space representations of the annular 
Temperley-Lieb algebra'', {\sl Pacific J. Math. \bf 228} (2006), no. 2, 219--249.

\bibitem{KS}  Koo, W. M. and Saleur, H, ``Representations of the Virasoro algebra from lattice models'',
{\sl Nuclear Phys. B \bf 426} (1994), no. 3, 459--504.

\bibitem{L} Langlands R., ``On unitary representations of the Virasoro algebra'',
in {\sl Infinite-dimensional Lie algebras and their applications} (Montreal
1986), World Sci. Publ., 1988, 141--159.

\bibitem{LZ5} G. I. Lehrer and R. B. Zhang, ``The Brauer Category and Invariant Theory",
{\sl J. Eur. Math. Soc. (JEMS) \bf 17} (2015), no. 9, 2311--2351.

\bibitem{MLW}  Martin, P. P., Launer, G. and Westbury, B. W., ``The Potts models and a generalisation of the Clifford algebras'',
{\sl Bull. London Math. Soc. \bf 21} (1989), no. 6, 544--550.

\bibitem{M}  Martin, Paul, ``Potts models and related problems in statistical mechanics'',
{\sl Series on Advances in Statistical Mechanics, \bf 5}. 
World Scientific Publishing Co., Inc., Teaneck, NJ, 1991. xiv+344 pp. ISBN: 981-02-0075-7



{
\bibitem{N} Nichols, A., ``The Temperley-Lieb algebra and its generalizations in
the Potts and $XXZ$ models, {\sl Jour. Stat. Mech.} (2006), 46 pages.}

\bibitem{RT} N.~Reshetikhin and  V.G.~Turaev, 
``Invariants of 3-manifolds via link polynomials and quantum groups'',
{\sl Invent. Math. \bf 103} (1991), no. 3, 547--597. 


\bibitem{R}  Reznikoff, Sarah A., ``Temperley-Lieb planar algebra modules 
arising from the ADE planar algebras'', {\sl J. Funct. Anal. \bf 228} (2005), no. 2, 445--468.

\bibitem{V}  Verlinde, Erik,  ``Fusion rules and modular transformations in 2D
 conformal field theory'', {\sl Nuclear Phys. B \bf 300} (1988), no. 3, 360--376.

\bibitem{W}  Wenzl, Hans, ``On sequences of projections'' {\sl C. R. Math. Rep. Acad. Sci. Canada \bf 9}
 (1987), no. 1, 5--9.









\end{thebibliography}
\end{document}